 \newlength{\baseunit}               
\newcommand{\zed}{\mathbb{Z}}
\newcommand{\bR}{\mathbb{R}}
\newtheorem{theorem}{Theorem}[section]
\newtheorem{cor}[theorem]{Corollary}
\newtheorem{lemma}[theorem]{Lemma}
\newtheorem{prp}[theorem]{Proposition}
\newtheorem{conjecture}[theorem]{Conjecture}
\theoremstyle{remark}
\newtheorem*{rem}{Remark}
\newcommand{\notation}[1]{}
\newcommand{\remind}[1]{{}}
\newcommand{\lremind}[1]{{}}
\newcommand{\secretnote}[1]{}
\begin{document}
\pagestyle{plain}
\title{The resonance method for large character sums}

\author{Bob Hough}
\email{rdhough@math.stanford.edu}
\maketitle

\begin{abstract}
We consider the size of large character sums, proving new lower bounds
for the quantity $\Delta(N,q) = \sup_{\chi\neq \chi_0 \bmod q}
\left|\sum_{n \leq N} \chi(n)\right|$ for almost all ranges of $N$.  Our
results improve those of Granville and
Soundararajan \cite{granville_soundararajan_large_char}, and are are
typically stronger than corresponding bounds known for real character
sums. The results are proven using the resonance method and saddle
point analysis.
\end{abstract}

{\parskip=12pt 

\section{Introduction}
In \cite{granville_soundararajan_large_char} Granville and Soundararajan have
made an extensive study of large character sums.  Varying the
Dirichlet character $\chi$ among non-trivial characters to a large
modulus $q$ they establish  (among other results) new lower bounds for the
quantity
\begin{equation}\label{unitary}\Delta(N,q) = \sup_{\chi \neq \chi_0 \bmod q} \left| \sum_{n \leq
    N}\chi(n)\right|;\end{equation}
varying $D$ among squarefree numbers $q \leq |D| \leq 2q$ they also
consider large values of real character sums, giving lower bounds for
the quantity\footnote{$\left(\frac{D}{\cdot}\right)$ is the Legendre symbol.}
\begin{equation}\label{symplectic} \Delta_\bR(N,q) = \sup_{q \leq |D|
    \leq 2q} \left|\sum_{n \leq N} \left(\frac{D}{n}\right)\right|.
\end{equation}
In both cases their results have been the best known for
essentially all ranges of $N$.  

Omega results of these types are interesting because even
assuming powerful (conjectural) analytic tools like the Generalized Riemann
Hypothesis and bounds derived from Random Matrix Theory for the
associated $L$-functions, the upper bounds that are available for
character sums are not clearly sharp for many ranges of $N$. P\'{o}lya and
Vinogradov proved the classical unconditional bound
\[\left|\sum_{n \leq N}\chi(n)\right| \leq \sqrt{q}\log q\]
and Montgomery and Vaughan \cite{mont_vaughan_grh} improved this to $\ll \sqrt{q}\log \log q$
under assumption of the GRH for
$L(s,\chi)$.  This bound is sharp up to constants, because Paley \cite{paley}
proved that there exist quadratic characters with character sum of
size $\gg \sqrt{q}\log \log q$, and Granville and Soundararajan have
shown the same result for non-quadratic characters in
\cite{granville_soundararajan_pretension}.  Nonetheless, if we believe in roughly square root
cancellation in long character sums, then the GRH bound appears good only if
$N$ is of size
$q^{1-\epsilon}$.  Recently Farmer, Gonek and Hughes \cite{farmer_gonek_hughes} have conjectured that
\begin{equation}\label{L_bound}\left|L(\frac{1}{2} + it; \chi)\right| \ll \exp\left((1 +
  o(1))\sqrt{\frac{1}{2}\log (q+|t|) \log \log (q+|t|)}\right)\end{equation} on
the basis of a calculation involving large unitary random
matrices. This would lead to a bound for smoothed character sums of
\[\sum_n \chi(n)\phi(\frac{n}{N}) \ll_\phi \sqrt{N}\exp\left((1 +
  o(1))\sqrt{\frac{1}{2}\log q \log \log q}\right), \qquad \phi\geq 0
\in C_c^\infty(\bR^+),\] improving the GRH bound for $N<
q^{1-\epsilon}$.  
If the bound (\ref{L_bound}) is true then we might speculate that for $N$
a power of $q$, $N = q^\theta, 0<\theta<1$ that the resulting bound
for $\sum \chi(n)\phi(\frac{n}{N})$ is nearly sharp; one reason for
this belief is that in our Theorem \ref{large_theorem}, below, we demonstrate that for
any such $q, N$ there are many non-principal $\chi$ modulo $q$ for which
\[\left|\sum_{n \leq N} \chi(n)\right| \geq \sqrt{N}\exp\left((1 +
  o(1))\sqrt{(1-\theta) \frac{\log q}{\log_2 q}}\right).\]

When $N < \exp\left(\sqrt{2 \log q \log \log
    q}\right)$ even the random matrix bound is trivial.  One insight
into large character sums for such small $N$ is available through a
conjecture of Granville and Soundararajan. For arithmetic function $f$, write as in
\cite{granville_soundararajan_large_char} 
\[\Psi(x,y;f) = \sum_{\substack{n \leq x\\ p|n \Rightarrow p \leq y}}
f(n)\] for the summatory function of $f$ restricted to numbers having
their largest prime factor at most $y$, and also set
$\Psi(x,y):=\Psi(x,y;1)$ for the number of such '$y$-smooth numbers' less
than $x$.
Granville and Soundararajan have proposed the following conjecture.
\begin{conjecture}[\cite{granville_soundararajan_large_char}
  Conjecture 1]\label{sound_gran_conj} There exists a constant $A > 0$ such that
\[\sum_{n \leq N}\chi(n) = \Psi(N,y;\chi) + o(\Psi(N,y;\chi_0))\]
for $y = (\log q + \log^2 N)\log_2^A q.$
\end{conjecture}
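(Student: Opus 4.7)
The natural strategy is to split the character sum into its $y$-smooth part and a complementary ``rough'' part, and to show that the rough contribution is of smaller order than $\Psi(N,y;\chi_0)$. Writing $P^+(n)$ for the largest prime factor of $n$, decompose
$$\sum_{n \leq N} \chi(n) = \Psi(N,y;\chi) + R(N,y;\chi), \qquad R(N,y;\chi) := \sum_{\substack{n \leq N \\ P^+(n) > y}} \chi(n),$$
and use the unique factorization $n = pm$ with $p = P^+(n)$ and $P^+(m) \le p$ to open up
$$R(N,y;\chi) = \sum_{y < p \leq N} \chi(p)\, \Psi(N/p,\, p;\chi).$$
The conjecture thus reduces to proving that this double sum is $o(\Psi(N,y;\chi_0))$ uniformly in non-principal $\chi \bmod q$.

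The plan is to break the prime sum dyadically into ranges $p \in (P, 2P]$ and handle each range by the tool best suited to it. For $P$ not much larger than $y$, the inner sum $\Psi(N/p, p; \chi)$ runs over a relatively long interval and Burgess' inequality delivers a power saving $q^{-\delta}$, which comfortably beats the target. For $P$ close to $N$, the inner sum degenerates to $O(1)$ and cancellation must instead come from $\sum_{P < p \leq 2P} \chi(p)$; I would access this via a Vaughan-type identity that rewrites $\chi$ on primes in terms of bilinear (type I/II) forms to be controlled by the large sieve together with Montgomery--Vaughan mean-value bounds. The awkward intermediate range is most naturally handled by Cauchy--Schwarz against an $L^2$ mean-value estimate for short character sums, combined with the Hal\'asz--Montgomery inequality to avoid losses in the number of conductors.

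The technical inputs are standard but delicate: Hildebrand--de Bruijn asymptotics for $\Psi(N,y)$ throughout the range $y \geq (\log N)^{1+\epsilon}$; Burgess' bound in the form $|\sum_{n \leq X}\chi(n)| \ll X^{1-1/r} q^{(r+1)/(4r^2) + \epsilon}$, giving non-trivial cancellation on sums of length as short as $q^{1/4+\epsilon}$; and a power-saving estimate for $\sum_{p \leq P}\chi(p)$, unconditionally available from Siegel--Walfisz when $P$ is small compared to $q$, and in wider ranges from Huxley-type zero-density bounds, subject to the usual Siegel ineffectiveness.

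The main obstacle, and presumably why the statement is posed as a conjecture rather than proved, lies in the regime of small $N$, say $N \leq \exp(C\sqrt{\log q \, \log_2 q})$. Here $\Psi(N,y;\chi_0)$ is of size only $N / (\log_2 q)^{O(1)}$ and the target error term becomes correspondingly stringent, yet $R(N,y;\chi)$ receives potentially large contributions from primes of size $N^{1-o(1)}$ at which the inner smooth sum is essentially constant. One is then asked to bound $\sum_{P < p \leq 2P}\chi(p)$ with $P \asymp N$ to a precision that appears out of reach even under GRH when $N$ is only a small fixed power of $q$. A full proof of the conjecture would therefore seem to require substantial new input on the distribution of $\chi(p)$ for $p$ in short ranges, of a strength not currently available.
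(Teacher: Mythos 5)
This statement is not a theorem of the paper but a \emph{conjecture}, quoted verbatim from Granville and Soundararajan (their Conjecture 1). The paper offers no proof of it, and neither does the cited source; it is stated precisely because it is open. So there is no ``paper's own proof'' to compare against, and a correct response to this exercise is to recognize that fact.

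Your proposal does, to its credit, arrive at essentially the right diagnosis: you correctly decompose via the largest prime factor, identify that the hard regime is small $N$ (roughly $N \leq \exp(C\sqrt{\log q \log_2 q})$), and observe that one would need cancellation in $\sum_{P < p \leq 2P} \chi(p)$ with $P \asymp N$ at a precision beyond what Burgess, zero-density estimates, or even GRH can supply when $N$ is a small power of $q$ or smaller. That obstruction is genuine and is exactly why the statement remains conjectural. The one thing to flag is the framing: the opening paragraphs read as though you expect the decomposition plus standard tools to carry the argument, and only the final paragraph concedes otherwise. Since the claim cannot be proved with current technology, the honest statement of the situation is your last paragraph; the ``plan'' preceding it sketches an attack that is known to fail in the critical range. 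For context, the paper's own contribution in this direction is the converse: Theorem \ref{small_prime_theorem} shows that if $q$ is prime and $\log N = (\log q)^{1/2-\epsilon}$ then there exist non-principal $\chi$ with $\left|\sum_{n \leq N}\chi(n)\right| \geq \Psi(N, \log q \log_2^{1-o(1)} q)$, which forces $A \geq 1$ in the conjecture — a lower bound on the required $A$, not a proof of the conjectured upper bound.
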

\noindent In particular, this conjecture  implies the upper bound \begin{equation}\label{conjecture_upper_bound}\left|\sum_{n \leq
    N}\chi(n)\right| \ll \Psi(N, (\log q + \log^2 N)\log_2^A q).\end{equation}
Note that in the range $\log N > \sqrt{\log q}$ where the
$\log^2 N$ term dominates, $\Psi(N, \log^2 N)$ is  of size
$\sqrt{N} \exp(\frac{\log N}{\log_2 N})$, which is already much larger than
the random matrix bound if $N$ is a small power of $q$;  there is not
wide-spread consensus, however, regarding the
conjectured bound (\ref{L_bound}), and so it is plausible that even this larger
bound for
character sums is near the truth.  When $\log N < \sqrt{\log q}$, by modifying
an argument of Granville
and Soundararajan (their Theorem 2) one can show\footnote{Although we do not
 do so here.} that the upper bound
(\ref{conjecture_upper_bound}) with, say, $A = 3$ follows from (\ref{L_bound}). 
Conversely, in our Theorem \ref{small_prime_theorem} we establish that if $q$ is
prime and $\log N = (\log q)^{\frac{1}{2} - \epsilon}$ then there is
non-principal character $\chi$ modulo $q$ with \[\left|\sum_{n \leq
    N}\chi(n)\right| \geq \Psi(N, \log q \log_2^{1 - o(1)}q);\] thus if
Conjecture
\ref{sound_gran_conj} is to hold, one must take $A \geq 1$.

There is
some heuristic reason to think that the constant $A = 1$ may be
the right one: given roughly $q$ characters modulo $q$, we might
expect that the first $\log q$ values of $\chi(p)$ may be 
correlated towards 1 for some character $\chi$, which would produce a
positive bias on the $\log q \log_2 q$-smooth numbers.   We should point out,
however, that our argument in Theorem \ref{small_prime_theorem} does not
condition the first
$\log q$ primes, but rather finds a smaller bias among many primes
that are larger than $\log q \log_2 q$.  Thus there may be reason to
believe that the large values of $\sum_{n \leq N}\chi(n)$ are even
larger than $\Psi(N, \log q \log_2 q)$.

\subsection{Discussion of previous work}
Surprisingly, the methods used in \cite{granville_soundararajan_large_char} in
treating general characters and  real characters
are  not at all related; the lower bounds  for
$\Delta(N,q)$ are produced by taking high moments of the character
sums\footnote{Essentially the $2k$th moment where $k = \lfloor
  \frac{\log q}{\log N} \rfloor$.}, while for $\Delta_\bR(N,q)$ the argument appeals to quadratic
reciprocity and Dirichlet's theorem on primes in arithmetic
progressions to first restrict to prime $|D|$ for which
$\left(\frac{D}{p}\right) = 1$ for all small  $p \ll \log q$.
This produces a large positive bias in the sum coming from the
smooth numbers that have all of their prime factors less than $\log
q$.  

When $N$ is relatively small compared to $q$, in the range $\log N <
\sqrt{\log q}$, the two methods perform roughly equally although the
results for real characters are slightly stronger.  
For larger $N$ this
difference becomes more pronounced, essentially because the first
method is most effective when the moment taken is quite large.  Note,
however, that the two methods are not directly comparable in
\cite{granville_soundararajan_large_char} because the results for large real
character sums are produced for conductors $D$ that are prime, whereas
the results for general characters
are stated primarily for the worst case when $q$ is the product of
many distinct small primes; when the conductor $q$ of a character
sum is highly composite in this way it greatly reduces the size of the
large character sums.  To give a trivial example, it may transpire
that $\Delta(q, \log q)
= 0$ since it is possible that for all $n \leq \log q$, $(n,q) > 1$.  

In this paper we adapt the 'resonance method' introduced by
Soundararajan in \cite{sound_resonance} to prove new bounds for
$\Delta(N,q)$.  We consider separately the bounds that this obtains
for $q$ prime and for any $q$.  For general $q$ we improve the bounds
for $\Delta(N,q)$ in \cite{granville_soundararajan_large_char} for all
$N$ larger than a fixed power of $\log q$. For prime $q$ our bounds
are stronger than those obtained in
\cite{granville_soundararajan_large_char} for $\Delta_\bR(N,q)$ for
all $N$ in the range $\exp((\log \log q)^2) \ll N \ll q \exp(-(\log
\log q)^2)$; outside this range the bounds given by
\cite{granville_soundararajan_large_char} were already (at least
conjecturally) best possible.  Moreover, for large N,
$\exp(\sqrt{\log q}) < N < q^{1-\epsilon}$ our improvements over the
previous bounds are substantial.  

We will describe our results in greater detail in the next
section, but we first pause to explain the resonance method in brief, 
 and its conceptual advantage  over the two
methods previously developed in
\cite{granville_soundararajan_large_char}; there is a sense in which
this method generalizes each of the earlier approaches.
The starting point is the simple inequality
\begin{equation}\label{basic_inequal}\min\{x_1, ..., x_n\} \leq \frac{w_1 x_1  + ... + w_n
  x_n}{w_1 + ... + w_n} \leq \max\{x_1, ..., x_n\},\end{equation} which is valid
for any non-negative weights $w_1, ..., w_n$ with some $w_i \neq 0$.
In the resonance method for character sums, the indices are
characters, the variables $x_\chi = \sum_{n \leq N}\chi(n)$ or $x_\chi
= \left|\sum_{n \leq N} \chi(n)\right|^2$ are
character sums, and the weights
are  (squared norms of)  Dirichlet polynomials:
\[w_\chi = \left|\sum_{n \leq x} r(n)\chi(n) \right|^2, \qquad x \leq \frac{q}{N}.\]  Here the
coefficients $r(n)$ are fixed, non-negative, and multiplicative, and are
chosen so as to
maximize the ratio in (\ref{basic_inequal}). When
$N$ is small, a good (but not optimal) choice for the weight $w_\chi$
is
\[w_\chi = \left|\left(\sum_{n \leq N}\chi(n)\right)^{k-1}\right|^2,\] and with
this choice of weights, the resonance method is seen to contain the
first method of \cite{granville_soundararajan_large_char}.  On the
other hand, we are free to choose a weight of the form
\[w_\chi = \left|\prod_{p < P} (1 + \chi(p))\right|^2,\] which has the
effect of placing much more emphasis on those $\chi$ for which
$\chi(p) \approx 1$ for many small primes.  Thus the resonance method
can be interpretted as taking a conditional expectation of character
sums with optimal conditioning, which, at least philosophically,
extends the second method of
\cite{granville_soundararajan_large_char}.

\subsection{Precise statement of results}
Our lower bounds for $\Delta(N,q)$ come in three forms: we give lower
bounds for $\Delta(N,q)$ that hold when $q$ is prime, and for any
$q$.  When $q$ is prime we also consider the dual problem of 'long'
character sums, and give lower bounds for $\Delta(\frac{q}{N}, q)$.  Recall that the Erd\"{o}s-Kac Theorem says that a 'typical'
number of size $x$ has $\sim \log \log x$ distinct prime factors; our
lower bounds for $\Delta(N,q)$, $q$ prime in fact apply equally well
if $q$ is typical, and even if $q$ has $\log^{1-\epsilon} q$ prime
factors, but we restrict to the case that $q$ is prime to ease the
exposition.    
For the dual problem we rely on a 'Fourier expansion' of character
sums, due to P\'{o}lya, that is only valid for primitive $\chi$; in
this case the  restriction to prime $q$ seems to be necessary to the
method.

In our first two theorems we consider the range $\log N < \sqrt{\log
  q}$.
\begin{theorem}\label{small_prime_theorem}
Let $q$ be a large prime and let $\log N < \sqrt{\log q}$ and define functions
\[G(\sigma) = \frac{\Gamma(1 -
    \frac{1}{2\sigma}) \Gamma(\frac{2}{\sigma} -
    1)}{\Gamma(\frac{3}{2\sigma})}, \qquad \qquad \kappa(\sigma) =
  2^{\frac{-1}{1-\sigma}} G(\sigma)^{\frac{\sigma}{1-\sigma}}.\] 
Let $\sigma > \frac{1}{2}$
  solve
$ \log N = \frac{(\log q)^{1-\sigma}}{2(1-\sigma)}G(\sigma)^\sigma$.  Then
\[ \Delta(N,q) \geq \Psi(N, (1 + o(1))\kappa(\sigma) \log q) .\]  
Let $\sigma' > \frac{1}{2}$ solve $\log N = \frac{(\log
q)^{1-\sigma'}}{2^{2-\sigma'}(1-\sigma')}G(\sigma')^{\sigma'}$.  Then
\[\Delta(\frac{q}{N},q) \gg \frac{\sqrt{q}}{N}\Psi(N, (\frac{1}{2} +
o(1))\kappa(\sigma')\log q).\]  Furthermore, if $\log N < \log_2^2 q
\log_3^{-10}q$ then
\[\Delta(N,q) \gg \sqrt{q}\frac{\log \log
  q}{\log\left(\frac{\log N}{\log \log q}\right)} \frac{\Psi(N,\log q)}{N}.\]
\end{theorem}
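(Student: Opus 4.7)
The common engine is the resonance method. Let $S(\chi)=\sum_{n\leq N}\chi(n)$, pick a nonnegative multiplicative $r$ supported on $y$-smooth integers $m\leq x$ with $xN\leq q-1$, and form $R(\chi)=\sum_m r(m)\chi(m)$. Starting from
\[\Delta(N,q)\geq \frac{\bigl|\sum_{\chi\neq\chi_0}|R(\chi)|^2 S(\chi)\bigr|}{\sum_{\chi\neq\chi_0}|R(\chi)|^2},\]
I would expand via orthogonality mod $q$: because $m_1 n\leq xN<q$, the congruence $m_1 n\equiv m_2\pmod q$ collapses to the equation $m_1 n=m_2$, and the $\chi_0$ contribution is negligible since $q$ is prime and large. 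This reduces matters to the purely arithmetic inequality
\[\Delta(N,q)\geq (1+o(1))\,\frac{\sum_{m\leq x}r(m)\sum_{d\mid m,\,d\leq x,\,m/d\leq N}r(d)}{\sum_{m\leq x}r(m)^2}.\]

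Next I would take $r=\mathbf{1}_{y\text{-smooth}}$, so the ratio becomes $\sum_{n\leq N,\,y\text{-smooth}}\Psi(x/n,y)/\Psi(x,y)$, and choose $x\asymp q/N$. By the Hildebrand--Tenenbaum saddle point asymptotic for $\Psi$, one has $\Psi(x/n,y)/\Psi(x,y)\sim n^{-\alpha(x,y)}$ uniformly on the range of summation, so the bound becomes $\sum_{n\leq N,\,y\text{-smooth}}n^{-\alpha}$. A second saddle point (Mellin inversion against the truncated Euler product $\hat\zeta(s,y)=\prod_{p\leq y}(1-p^{-s})^{-1}$, followed by rescaling by $\log y$) re-expresses this as $\Psi(N,y')$ for an effective $y'$; evaluating the resulting Mellin integrals against the Dickman $\rho$ produces exactly the Beta-function combination $G(\sigma)$, and optimising over $y$ forces $y'=(1+o(1))\kappa(\sigma)\log q$ under the stated transcendental relation between $\log N$ and $\sigma$.

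For the long-sum bound I would invoke P\'olya's Fourier expansion for primitive $\chi$ mod $q$,
\[\sum_{n\leq q/N}\chi(n)=\frac{\tau(\chi)}{2\pi i}\sum_{1\leq |m|\leq H}\frac{\bar\chi(m)\bigl(1-e^{-2\pi i m/N}\bigr)}{m}+O(\log q),\]
which rewrites the target as a Dirichlet polynomial in $\bar\chi$ of effective length $\asymp N$; the resonance argument is then repeated in the $\bar\chi$ variable. The factor $|\tau(\chi)|=\sqrt q$ supplies the leading $\sqrt q/N$, and the kernel $|1-e^{-2\pi i m/N}|\asymp\min(m/N,1)$ modifies the saddle point equation so that $\kappa(\sigma')$ is replaced by $\tfrac12\kappa(\sigma')$ and the prefactor in the defining relation becomes $2^{2-\sigma'}$. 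For the very small range $\log N<\log_2^2 q\cdot\log_3^{-10}q$ the saddle $\sigma$ degenerates outside the Hildebrand--Tenenbaum range; I would instead carry out the resonance with $r$ supported on primes in a single dyadic block near $\log q$, transfer via P\'olya to the dual, and estimate the resulting one-variable sum directly using Mertens and the standard Dickman estimate $\Psi(N,\log q)\asymp N\rho(\log N/\log_2 q)$, producing the denominator $\log(\log N/\log_2 q)$.

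The main obstacle is the saddle point analysis of the middle paragraph: making the approximation $\Psi(x/n,y)/\Psi(x,y)\sim n^{-\alpha(x,y)}$ rigorous uniformly for $n$ up to $N$, and then propagating it through the second saddle point to extract the exact Gamma-function combination $G(\sigma)$, requires delicate control on $\alpha(x,y)$ as a function of both parameters. Matching constants on the nose---so that the optimising smoothness parameter comes out precisely to $\kappa(\sigma)\log q$ with $\kappa(\sigma)=2^{-1/(1-\sigma)}G(\sigma)^{\sigma/(1-\sigma)}$ and the implicit equation $\log N=(\log q)^{1-\sigma}G(\sigma)^\sigma/(2(1-\sigma))$ emerges with the correct exponents---is the delicate bookkeeping heart of the argument.
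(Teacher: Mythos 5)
Your high-level framework (resonance ratio, P\'olya's Fourier expansion for the dual problem, saddle-point analysis of the resulting Dirichlet polynomials) is the one the paper uses. But the central object of the proof is the \emph{choice of resonator}, and this is where your proposal diverges from the paper in a way that fails to produce the stated constants.

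You propose the indicator resonator $r=\mathbf 1_{y\text{-smooth}}$, so that the fundamental ratio becomes $\sum_{n\le N,\,y\text{-smooth}}\Psi(x/n,y)/\Psi(x,y)\approx\sum_{n\le N,\,y\text{-smooth}}n^{-\alpha(x,y)}$, and you claim the ensuing Mellin inversion against $\prod_{p\le y}(1-p^{-s})^{-1}$ and the Dickman $\rho$ will manufacture the combination $G(\sigma)=\Gamma(1-\tfrac1{2\sigma})\Gamma(\tfrac2\sigma-1)/\Gamma(\tfrac3{2\sigma})$. This does not happen: the Gamma quotient $G(\sigma)$ has nothing to do with the Dickman function or with $\zeta(s,y)$. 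In the paper it arises entirely from the Mellin transform of the specific weight $g_\sigma(x)=f_\sigma(x)^2/(1-f_\sigma(x)^2)$, where $f_\sigma$ is defined implicitly by $f_\sigma(x)/(1-f_\sigma(x)^2)^2=(c_\sigma/x)^\sigma$ together with a normalization integral. The resonator is then $r_\sigma(p)=f_\sigma(p/M)$ with $M\approx\log q$, and the constant $\kappa(\sigma)$ emerges from the identity $\kappa^{1-\sigma}=(1-\sigma)\hat f_\sigma(1-\sigma)=\tfrac12 G(\sigma)^\sigma$. Nothing in the indicator construction, even after renormalizing to $\tilde r(p)=p^{-\alpha}\mathbf 1_{p\le y}$, produces these Gamma factors.

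More importantly, the indicator resonator is \emph{quantitatively suboptimal}. The heuristic Lagrange condition $r(p)/(1-r(p)^2)^2\doteq\lambda(\log p)/p^\theta$, which the paper uses to motivate $f_\sigma$, is strongly violated by $r(p)=\mathbf 1_{p\le y}$ (the left side blows up for $p\le y$ and vanishes for $p>y$). As $\sigma\downarrow\tfrac12$, i.e.\ as $\log N\uparrow\sqrt{\log q}$, the paper's $\kappa(\sigma)$ tends to infinity, giving $\Psi(N,\kappa\log q)$ with $\kappa\to\infty$; the indicator resonator cannot match this because the loss factor $B=\Psi(x,y)/\Psi(x/N,y)\approx N^{\alpha(x,y)}$ grows significantly in this range and no choice of $y$ recovers it. The paper's $f_\sigma$ is designed precisely so that $B=1+o(1)$ while $\sum_{n\le N}r_\sigma(n)$ matches $\Psi(N,\kappa(\sigma)\log q)$ up to an $o(1)$ in the exponent, via the comparison of Dirichlet series $R_\sigma(s)$ and $\zeta(s,\kappa M)$ (the paper's Proposition labelled \texttt{comparison\_prop} and Lemma \ref{phi_i_comparison}).

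Two smaller deviations. For the very small range $\log N<\log_2^2 q\,\log_3^{-10}q$ you propose a resonator supported on a single dyadic block of primes near $\log q$; the paper instead takes $r(p)=1-(\log_2 q)^{-2}$ on \emph{all} primes up to $\log q/\log_2^5 q$, and the block near $\log q$ would overshoot the constraint $\sum_p\log p\cdot r(p)^2/(1-r(p)^2)<\log x$. Second, the kernel in P\'olya's expansion the paper actually uses is $1-\cos(2\pi h/N)$ (the real part); your statement with $1-e^{-2\pi i m/N}$ is the raw expansion but the positivity that lets the resonator sum be bounded below termwise comes from the cosine form. Both are fixable, but the core gap is that the resonator $f_\sigma$ and the evaluation of its Mellin transform (Lemma \ref{f_facts}) are the engine of the theorem, and your proposal replaces them with an object that does not produce $G(\sigma)$ or $\kappa(\sigma)$.
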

\noindent
The function $\kappa(\sigma)$ is decreasing on $(\frac{1}{2},1)$.  It satisfies
$\lim_{\sigma \to 1} \kappa(\sigma) = \frac{8}{e^3}$ and $\kappa(\sigma) \sim
\frac{1}{4\sigma - 8}$ as $\sigma \downarrow \frac{1}{2}$.  

We can
deduce the following corollary.
\begin{cor}
Suppose $\log N = (\log q)^{o(1)}$.  Then \[\Delta(N,q) \geq \Psi(N,
(\frac{8}{e^3} + o(1))\log q), \qquad \qquad \frac{8}{e^3} = 0.3982965...\]  
If $\frac{1}{2} < \sigma < 1$ is fixed and $\log N = (\log
q)^{1-\sigma}$ then \[\Delta(N,q) \geq \Psi(N, (\kappa(\sigma) +
o(1))\log q).\]
Finally, if instead $\log N = \sqrt{\log q}\exp(-(\log_2 q)^{o(1)})$ then \[\Delta(N,q)
\geq \Psi(N, \log q \log_2^{1-o(1)}q).\]
\end{cor}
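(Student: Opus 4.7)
The plan is to invoke Theorem~\ref{small_prime_theorem} and, in each of the three stated regimes, to locate the parameter $\sigma\in(1/2,1)$ defined implicitly by
\[
\log N \;=\; \frac{(\log q)^{1-\sigma}}{2(1-\sigma)}\,G(\sigma)^{\sigma},
\]
after which one rewrites the bound $\Psi(N,(1+o(1))\kappa(\sigma)\log q)$ in the form demanded by the corollary. The required input beyond the theorem is the two boundary asymptotics of $\kappa$ recorded in the excerpt: $\kappa(\sigma)\to 8/e^3$ as $\sigma\uparrow 1$, and $\kappa(\sigma)\sim C(2\sigma-1)^{-1}$ for some explicit $C>0$ as $\sigma\downarrow 1/2$, both of which follow from $G(1)<\infty$ and the simple pole of $\Gamma$ at $0$ appearing in the first factor of $G(\sigma)=\Gamma(1-\frac{1}{2\sigma})\Gamma(\frac{2}{\sigma}-1)/\Gamma(\frac{3}{2\sigma})$.

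For the first assertion, $\log N=(\log q)^{o(1)}$ forces the right-hand side of the defining equation to be $(\log q)^{o(1)}$, which is possible only if $(1-\sigma)\log\log q\to 0$; hence $\sigma\to 1$ and $\kappa(\sigma)\to 8/e^3$. For the second assertion, with $\log N=(\log q)^{1-\sigma_0}$ at fixed $\sigma_0\in(1/2,1)$, taking logarithms of the defining equation gives $(1-\sigma)\log\log q=(1-\sigma_0)\log\log q+O(1)$, since $G$ and $(1-\sigma)^{-1}$ are bounded on a small compact interval around $\sigma_0$ contained in $(1/2,1)$. Hence $\sigma=\sigma_0+O(1/\log\log q)$, and continuity of $\kappa$ at $\sigma_0$ yields $\kappa(\sigma)=\kappa(\sigma_0)+o(1)$.

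For the third assertion, set $u:=2\sigma-1$ and insert the expansion $G(\sigma)\sim c/u$ as $u\downarrow 0$. The defining equation becomes $\log N\sim\sqrt{\log q}\cdot(\log q)^{-u/2}\cdot c'u^{-1/2}$ for some explicit $c'>0$; taking logarithms and writing $L:=\sqrt{\log q}/\log N$, so that $\log L=(\log_2 q)^{o(1)}$ by hypothesis, yields the transcendental equation
\[
\log L \;=\; \frac{u}{2}\log\log q \;+\; \frac{1}{2}\log u \;+\; O(1).
\]
A balancing argument (the first term dominates once $u$ exceeds roughly $\log\log\log q/\log\log q$, the second term otherwise) shows that $u=(\log_2 q)^{-1+o(1)}$ throughout the range of the hypothesis. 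Plugging this into $\kappa(\sigma)\sim C/u$ delivers $\kappa(\sigma)=(\log_2 q)^{1-o(1)}$, as required.

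The only delicate step is the third case: the $o(1)$ in the hypothesis on $\log N$ must be tracked faithfully through the solution of the above transcendental equation for $u$, especially in the subcase where $L$ remains bounded and the two leading terms on the right-hand side nearly cancel. The first two cases reduce to continuity of $\kappa$ once the boundary asymptotics are in hand.
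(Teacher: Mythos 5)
Your proposal is correct and follows the route the paper clearly intends: invoke Theorem~\ref{small_prime_theorem}, pin down $\sigma$ asymptotically in each regime via the implicit equation, and then read off $\kappa(\sigma)$ from the two boundary asymptotics stated after the theorem. One small imprecision in the first case: $\log N = (\log q)^{o(1)}$ only forces $(1-\sigma)\log_2 q = o(\log_2 q)$, i.e.\ $1-\sigma = o(1)$, not $(1-\sigma)\log_2 q\to 0$ (for instance $1-\sigma = (\log_2 q)^{-1/2}$ still gives $\log N=(\log q)^{o(1)}$); but $\sigma\to 1$ follows from the weaker and correct statement, so the conclusion $\kappa(\sigma)\to 8/e^3$ is unaffected. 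In the third case the subcase you flag as delicate (where $L=\sqrt{\log q}/\log N$ stays bounded) does work out: writing $u=v/\log_2 q$ one gets $v+\log v=\log_3 q+O(1)$, so $v\sim\log_3 q$ and $u=(\log_2 q)^{-1+o(1)}$ as claimed, matching the constraint $\sigma-\tfrac12\gg\log_3 q/\log_2 q$ from Lemma~\ref{phi_1_eval}.
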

For $q$ prime, in \cite{granville_soundararajan_large_char} Theorem 3 the
bound $\Delta(N,q) \geq \Psi(N, \log q)$ was proved for $\log N <
\frac{\log_2^2 q}{\log_3^2 q}$.  In this range we have $\Psi(N, \log q)
\sim \Psi(N, \frac{8}{e^3}\log q)$ and so our theorem extends this result to 
$\log N < \sqrt{\log q}$.  A more
direct comparison is to \cite{granville_soundararajan_large_char}
Theorem 9, where in the range $\log N < \sqrt{\log q}$ they prove give a bound
for real characters of
\[\Delta_\bR(N,q) \geq \Psi(N, \frac{1}{3} \log q).\]  In the range
$\log N = (\log q)^{o(1)}$ the bound from Theorem
\ref{small_prime_theorem} is already superior, and as $\log N$
increases, the ratio $\frac{\kappa(\sigma)\log q}{\frac{1}{3} \log q}$
in the number of smooth numbers taken,
tends to 
$\infty$.  Regarding the dual statement,
\cite{granville_soundararajan_large_char} Theorem 8 previously gave
the bound
\[\max_{t \leq \frac{q}{N}} \max_{\chi \neq \chi_0 \bmod q}
\left|\sum_{n \leq t} \chi(n)\right| \gg \sqrt{q}\frac{1}{N} \Psi\left(N,
\frac{\log q}{(\log_2 q)^{10}}\right);\] our theorem thus removes the
need for a second maximum over $t$, and improves the bound.  The gain
over the previous result for real characters
(\cite{granville_soundararajan_large_char} Theorem 11) is comparable
to the improvement for $\Delta(N,q)$.  

We next state our result for general moduli $q$.
\begin{theorem}\label{small_composite_theorem}
Let $q$ be any large integer and let $N$ be such that $\log^B q < N <
\exp(\sqrt{\log q})$ for a sufficiently large fixed constant $B$.
In the range $\log N < \log_2^3 q \log_3 q$, there exists a parameter
$\eta = \eta(N,q) = (1 + o(1))\log_2 q$ such that,
\[\Delta(N,q) \geq \frac{N^{\frac{1}{2} + \frac{\lfloor
      u'\rfloor}{2u'}}}{(\log N)^{\lfloor u' \rfloor}}
\left(\frac{e}{\sqrt{2} + o(1)}\right)^{\lfloor u' \rfloor}, \qquad
\qquad u' = \frac{\eta}{\eta + 1} \frac{\log N}{\log_2 q}.\]
In the wider range $\log N \gg \log_2^3 q \log_3 q$, set $\log N =
(\log q)^{1-\sigma}$.  We have
\[\Delta(N,q) \geq \frac{N}{(\log N)^u} \left(\frac{e -
    o(1)}{\sqrt{2\sigma(2\sigma-1)}}\right)^u, \qquad \qquad u =
\frac{\log N}{\log_2 q}.\]
\end{theorem}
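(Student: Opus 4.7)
The plan is to apply the resonance method of the introduction with weights $w_\chi = |R(\chi)|^2$, where
\[ R(\chi) = \sum_{n \leq x} r(n)\chi(n), \qquad x = \frac{q}{N \log^2 q}, \]
and $r$ is nonnegative and multiplicative, supported on squarefree integers coprime to $q$ whose prime factors lie in a set $\mathcal{P}$ to be chosen. Writing $S(\chi) = \sum_{n \leq N}\chi(n)$ and $M_1 = \sum_\chi |R(\chi)|^2$, $M_2 = \sum_\chi |R(\chi)|^2|S(\chi)|^2$, the basic inequality (\ref{basic_inequal}) applied to $x_\chi = |S(\chi)|^2$ yields
\[ \Delta(N,q)^2 \geq \frac{M_2 - |R(\chi_0)S(\chi_0)|^2}{M_1 - |R(\chi_0)|^2}, \]
and the $\chi_0$-contribution is of strictly lower order because it lacks the factor of $\phi(q)$ produced by character orthogonality in the remaining sums.

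Next I would compute $M_1$ and $M_2$ exactly by orthogonality modulo $q$. Since $xN < q$ and $r$ vanishes off integers coprime to $q$, the congruence $n_1 m_1 \equiv n_2 m_2 \pmod{q}$ collapses to the equality $n_1 m_1 = n_2 m_2$, giving
\[ M_1 = \phi(q)\sum_{\substack{n \leq x \\ (n,q)=1}} r(n)^2, \qquad M_2 \geq \phi(q)\sum_{\substack{\ell \leq x \\ (\ell,q)=1}}\biggl(\sum_{n \mid \ell} r(n)\biggr)^2, \]
the bound on $M_2$ coming from restricting to $\ell \leq x$ and discarding the nonnegative tail. Multiplicativity of $r$ identifies $\sum_{n \mid \ell} r(n) = \prod_{p \mid \ell,\, p \in \mathcal{P}}(1 + r(p))$, and a Rankin-trick shift by $(\ell/x)^s$ in the $\ell$-sum converts the ratio $M_2/M_1$ into a problem of optimizing a single Euler product indexed by $\mathcal{P}$ at an optimal real shift $s$.

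The third step is the optimization. I would take $\mathcal{P}$ to consist of primes in an interval $(y_0, y_1)$ with $y_0$ slightly above $\log_2 q$ (so that the restriction $(n,q)=1$ costs nothing, exploiting $N > \log^B q$ for $B$ sufficiently large), and $y_1$ tuned to $\log N$ and $\log_2 q$; and I would set $r(p)$ essentially constant on $\mathcal{P}$, at the value that saturates the local Euler factor. The effective number of primes in $\mathcal{P}$ is of order $u = \log N / \log_2 q$, which is exactly the parameter in the statement. Evaluating the truncated Euler product at the optimal Rankin shift produces the shape $N \cdot (e/\log N)^u$, and a saddle-point analysis of the local factors supplies the constant $(2\sigma(2\sigma - 1))^{-u/2}$, with $\sigma$ determined by the relation $\log N = (\log q)^{1-\sigma}$.

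The main obstacle will be the sharp constant in the exponent, and the bifurcation between the two ranges. In the wide range $\log N \gg \log_2^3 q \log_3 q$ the interval $(y_0, y_1)$ is long enough that the continuous saddle-point analysis applies cleanly and yields the claimed $(e/\sqrt{2\sigma(2\sigma-1)})^u$ factor. In the narrower range $\log N \ll \log_2^3 q \log_3 q$, however, $\mathcal{P}$ is too short for a saddle point to be valid, and one must pass to a discrete optimization: restrict $r$ to squarefree products of exactly $\lfloor u' \rfloor$ primes with $u' = \frac{\eta}{\eta + 1}\cdot\frac{\log N}{\log_2 q}$ and $\eta = (1+o(1))\log_2 q$, which is what produces the floored exponent $\lfloor u' \rfloor/(2u')$ in the theorem. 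Balancing this discrete truncation against the Rankin cutoff $\ell \leq x$ and the restriction $(n,q)=1$ is where the most care is needed, and is exactly the content of the hypothesis $\log^B q < N$.
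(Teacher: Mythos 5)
Your proposal diverges from the paper in several ways, two of which are genuine problems.

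\textbf{Choice of prime support and the coprimality constraint.} You put the resonator primes in an interval starting at $y_0$ ``slightly above $\log_2 q$,'' i.e.\ just above $\log\log q$, and justify the coprimality constraint $(n,q)=1$ by invoking $N > \log^B q$. This is not correct. The hypothesis $N > \log^B q$ puts no restriction on the prime factorisation of $q$; a worst-case $q$ has $\sim \log q/\log\log q$ prime factors filling up the interval $(\log\log q,\ \log q)$, so supporting $r$ on primes just above $\log\log q$ does not avoid the prime factors of $q$ at all. What actually resolves this is the swapping device in Proposition~\ref{fundamental_prop}: if $M$ is minimal with $\sum_{p\le M}\log p > \log q$, so $M\asymp\log q$, then a resonator supported on primes $>M$ can have any values at primes dividing $q$ exchanged with primes $\le M$ coprime to $q$, because $q$ has at most $\pi(M)$ prime factors exceeding $M$. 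Accordingly the paper's resonator $r(p) = L\log p/p$ lives on $P<p<P^2$ with $\log P\sim\log M\sim\log_2 q$, i.e.\ primes of size roughly $\log q$ up to $(\log q)^2$ --- exponentially larger than your $y_0$. Without this, your construction does not produce a valid resonator for composite $q$, which is the entire point of Theorem~\ref{small_composite_theorem}.

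\textbf{Second moment versus first moment.} The paper proves Theorem~\ref{small_composite_theorem} via the first-moment Proposition~\ref{fundamental_prop}, obtaining $\Delta(N,q)\ge (1+o(1))\sum_{n\le N}r(n)$ directly; the second-moment Proposition~\ref{fund_second} is reserved for Theorem~\ref{large_theorem}, where $N$ is large and the first moment is inadequate. Your second-moment setup is not automatically worse, but it introduces difficulties you have not addressed: your inequality $M_2 \ge \phi(q)\sum_{\ell\le x}\bigl(\sum_{n\mid\ell}r(n)\bigr)^2$ is false as written, since for $\ell\le x$ with $x > N$ the divisor $n\mid\ell$ must also satisfy $\ell/n\le N$, a constraint you drop (restricting to $\ell\le N$ instead of $\ell\le x$ would fix the inequality but significantly weakens it). More substantively, you have not verified that the resulting Euler-product optimisation reproduces the target $N^{\sigma}$ term and the constant $(e/\sqrt{2\sigma(2\sigma-1)})^u$; this is nontrivial and the paper avoids the question by using the first moment.

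\textbf{The floor function.} Your explanation of $\lfloor u'\rfloor$ --- restricting $r$ to squarefree integers with exactly $\lfloor u'\rfloor$ prime factors --- is not the paper's mechanism and I do not see how to carry it out. In the paper $r$ is completely multiplicative, and the floor emerges analytically: in the Perron integral for $\sum_{n\le N}r(n)$ the quantity $\phi_0(\sigma+it)$ is approximately $\overline u\,P^{-it}/(1+it/\sigma)$, which is nearly periodic in $t$ with period $2\pi/\log P$, so the integral decomposes into a sum over $j\in\mathbb{Z}$; the parameters $(\sigma, P, \eta)$ in Proposition~\ref{implicit_def} are tuned so that the linear phases across these periods cancel, and this tuning forces $\overline{u} = \lfloor u'\rfloor$. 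That is why $\lfloor u'\rfloor$ enters, and why the bifurcation between the two ranges of $\log N$ occurs (the periodic structure becomes irrelevant once $\phi_2(\sigma)$ is large enough for the saddle to dominate on a single period). Your proposed discrete restriction of the support would require its own proof that the resulting weight still satisfies the tail condition of Lemma~\ref{tail_bound_lemma} and yields the stated constants, and these steps are not sketched.
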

\noindent
This theorem should be compared with
\cite{granville_soundararajan_large_char} Theorem 4; for all $N$
larger than a fixed power of $\log q$ we obtain an improvement which
is at least exponential in $u$; as $\log N$ increases to $\sqrt{\log
  q}$ the improvement is larger than any fixed exponential in $u$.

Next we consider the range $\log \log N = (\frac{1}{2} + o(1))\log
\log q$.
\begin{theorem}\label{moderate_theorem}
Let $q$ be any large integer and let $\log N = \tau \sqrt{\log q
  \log_2 q}$ with $\tau = (\log_2 q)^{O(1)}.$  Define $A$ and $\tau'$
by solving
\[\tau = \int_{A}^\infty e^{-x}\frac{dx}{x}, \qquad\qquad \tau' = \int_{A}^\infty e^{-x} \frac{dx}{x^2}.\]  Then
\[\Delta(N,q) \geq \sqrt{N} \exp\left((1 + o(1))A(\tau +
  \tau')\sqrt{\frac{\log q}{\log_2 q}}\right).\]  If $q$ is prime then
instead define $A$ and $\tau'$ by 
\[\sqrt{2}\tau = \int_A^\infty e^{-x}\frac{dx}{x}, \qquad \qquad \tau =
\int_{A}^\infty e^{-x}\frac{dx}{x^2}.
\]
With this new definition we have
\[\Delta(\frac{q}{N}, q) \geq \sqrt{\frac{q}{N}} \exp\left((1 +
  o(1))A\left(\tau + \frac{\tau'}{\sqrt{2}}\right)\sqrt{\frac{\log q}{\log_2
q}}\right).\]
\end{theorem}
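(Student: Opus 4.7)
The approach is the resonance method. For any Dirichlet polynomial $R(\chi) = \sum_{a \leq x} r(a)\chi(a)$ with non-negative coefficients,
$$\Delta(N,q)^2 \geq \frac{\sum_{\chi \neq \chi_0} \left|\sum_{n\leq N}\chi(n)\right|^2 |R(\chi)|^2}{\sum_{\chi \neq \chi_0}|R(\chi)|^2}.$$
I would take $r$ multiplicative and supported on squarefree integers whose prime support is concentrated in a dyadic window of primes determined by a saddle-point parameter $A$, with the resonator length $x$ chosen so that $Nx \ll q$, ensuring that character orthogonality can be applied without wrap-around modulo $q$.

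The first step is expansion by orthogonality. The denominator equals $(\phi(q) + O(1))\sum_a r(a)^2$, since $a \equiv b \pmod{q}$ with $a, b \leq x \ll \sqrt{q}$ forces $a = b$. The numerator reduces to $\phi(q)$ times the sum over $(m, n, a, b)$ with $m, n \leq N$, $a, b \leq x$ and $ma = nb$ as an integer equation; writing $K = ma = nb$, this equals $\phi(q)\sum_K F(K)^2$, where
$$F(K) = \sum_{\substack{D \mid K \\ D \leq x,\,\, K/D \leq N}} r(D).$$
The diagonal $m = n$, $a = b$ contributes $N\sum_a r(a)^2$, reproducing the trivial bound $\sqrt{N}$, while the off-diagonal contributions, through the near-multiplicativity of $F$ inherited from $r$, organize into an Euler product over primes in the support of $r$.

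The heart of the argument is the saddle-point optimization of this Euler product. Parameterizing $r(p)$ on primes near $\log p \asymp A \log_2 q$ and passing from prime sums to integrals via the prime number theorem, the logarithm of the ratio (normalized by $N$) expands asymptotically as $2A(\tau + \tau')\sqrt{\log q/\log_2 q}$, where $\tau = \int_A^\infty e^{-x}\,dx/x$ arises from the first-order contribution of the natural weight $\min(1, N/p)$ at scale $p$ and $\tau' = \int_A^\infty e^{-x}\,dx/x^2$ from the second-order correction; taking square roots yields the stated bound. For the dual statement ($q$ prime), P\'{o}lya's Fourier expansion
$$\sum_{n \leq q/N}\chi(n) = \frac{\tau(\chi)}{2\pi i}\sum_{1\leq|h|\leq H}\chi(h)\frac{1-e^{-2\pi ih/N}}{h} + O\!\left(\frac{\sqrt{q}\log q}{H}\right)$$
(valid only for primitive $\chi$, which forces the restriction to prime $q$) expresses $S(q/N, \chi)$ as the Gauss sum $\tau(\chi)$ of modulus $\sqrt{q}$ times a Dirichlet polynomial in $\chi$; applying the resonance method to this polynomial yields the $\sqrt{q/N}$ prefactor, while the $\sqrt{2}$ appearing in the modified integral equations reflects the different $L^2$-mass of the coefficients $(1 - e^{-2\pi ih/N})/h$ compared to the indicator $\mathbf{1}[n \leq N]$.

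The main obstacle will be the precise saddle-point analysis. The local factors in the Euler product involve $F(K)$ with the constraints $D \leq x$ and $K/D \leq N$ creating nontrivial boundary effects across the range of primes, and extracting the exact constants $A(\tau + \tau')$ (and $A(\tau + \tau'/\sqrt{2})$ in the prime case) requires careful uniform control of these boundary contributions as the continuous optimization limit is taken via the prime number theorem.
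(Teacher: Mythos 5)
Your proposal applies the \emph{second-moment} version of the resonance method (the approach the paper uses for Theorem~\ref{large_theorem}) to the moderate range $\log N = \tau\sqrt{\log q\log_2 q}$, whereas the paper proves Theorem~\ref{moderate_theorem} via the \emph{first-moment} Propositions~\ref{fundamental_prop} and~\ref{fund_dual}, evaluating $\sum_{n\le N}r(n)$ (and $\sum_{n\le N}nr(n)$ in the dual case) by a genuine Perron/saddle-point calculation in Section~5, with resonator $r(p)=\lambda/(\sqrt{p}\log p)$ on $\lambda^2<p<\exp(\log^2\lambda)$ and completely multiplicative, not squarefree-supported. Since the paper explicitly notes ``we may assume $\tau\ll\log_3 q$ since the case of larger $\tau$ is contained in Theorem~\ref{large_theorem}'', the essential content of Theorem~\ref{moderate_theorem} lies precisely in the range where the second-moment machinery of Section~6 does not reach.

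The gap in your proposal is concrete. After opening up orthogonality you must truncate the off-diagonal at $m_1,m_2\le z$ with $z$ strictly below $N$ (the sum over $h<N/\max(m_1,m_2)$ is otherwise empty); in Proposition~\ref{fund_second} this is enforced by $z<N/\log^3 q$. Your claim that the off-diagonal ``organizes into an Euler product'' requires the truncated quantity $\sum_{m\le z}t(m)/\sqrt{m}$ to already be asymptotic to the full product $\prod_p(1+t(p)/\sqrt{p})=\exp((1+o(1))\lambda/(2\log\lambda))$. That asymptotic is supplied by Lemma~\ref{ingredients}, which needs $\log z>3\lambda\log_2\lambda\asymp\sqrt{\log q\log_2 q}\,\log_3 q$; but in Theorem~\ref{moderate_theorem} one has $\log N=\tau\sqrt{\log q\log_2 q}$ with $\tau$ potentially as small as $(\log_2 q)^{-O(1)}$, so $\log z\le\log N$ falls far below that threshold. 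In this range the truncated sum must itself be evaluated by a saddle point at some $\sigma'>0$, and the resulting exponent $z^{\sigma'}e^{\tilde\phi_0(\sigma')}$ does not a priori reproduce the constant $A(\tau+\tau')$ that the statement requires; your outline does not address how (or whether) this happens, nor does it establish part~A (the tail bound $\sum_{n\le x/N}r(n)^2\sim\prod_p(1-r(p)^2)^{-1}$ needed to discharge the denominator). The paper's first-moment route avoids the truncation obstruction entirely: the saddle point at $\sigma$ solving $\phi_1(\sigma)=\log N$ produces $N^\sigma e^{\phi_0(\sigma)}$ directly, and the identities $\sigma\log N=\tfrac12\log N+\tfrac{A\log N}{2\log\lambda}+o(\cdot)$ and $\phi_0(\sigma)=(1+o(1))\tfrac{A\tau'\lambda}{2\log\lambda}$ derived from the prime number theorem yield the $A(\tau+\tau')$ exponent cleanly; you would need a separate argument, not sketched here, to extract the same constant from the second-moment bilinear form with small $z$.
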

\noindent Note that as $\tau \to \infty$, $A\tau \to 0$ and $A \tau' \to 1$.
Meanwhile, as $\tau \to 0$, $A \to \infty$ and $\tau \sim
\frac{e^{-A}}{A}$, $\tau' \sim \frac{e^{-A}}{A^2}.$  This theorem
should be compared to \cite{granville_soundararajan_large_char}
Theorems 5 and 8; a direct comparison is difficult because their
statement is not explicit, but asymptotically our result is stronger
as $\tau \to 0$ and $\tau \to \infty$.

Finally we consider longer character sums.
\begin{theorem}\label{large_theorem}
Let $q$ be a large integer and let $4\sqrt{\log q
\log_2q} \log_3 q< \log N$ and $N = q^\theta$ with $\theta <
1-\epsilon$.  Then 
\[\Delta(N,q) \geq \sqrt{N}\exp\left((1 + o(1)) \sqrt{\frac{(1-\theta)
      \log q}{\log_2 q}}\right).\]
If in addition $q$ is prime then
\[\Delta(\frac{q}{N}, q) \geq \sqrt{\frac{q}{N}} \exp\left((1 +
  o(1))\sqrt{\frac{(1-\theta)\log q}{2\log_2 q}}\right).\]
\end{theorem}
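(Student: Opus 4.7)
The plan is to apply the resonance method (the basic inequality (\ref{basic_inequal})) with the resonator $R(\chi) = \sum_{m\leq x,\,(m,q)=1}r(m)\chi(m)$, taking $x = q/N = q^{1-\theta}$ and a non-negative multiplicative $r$ supported on squarefree integers. With weights $w_\chi = |R(\chi)|^2$ and variables $x_\chi = |\sum_{n\leq N}\chi(n)|^2$, this gives
\[\Delta(N,q)^2 \geq \frac{\sum_{\chi\neq\chi_0}|\sum_{n\leq N}\chi(n)|^2|R(\chi)|^2}{\sum_{\chi\neq\chi_0}|R(\chi)|^2},\]
and the goal is to bound the right-hand side below by $N\exp(2(1+o(1))L)$ with $L = \sqrt{(1-\theta)\log q/\log_2 q}$. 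The factor $(1-\theta)$ in $L$ is inherited from the identity $\log x = (1-\theta)\log q$, so the choice $x = q/N$ is the one that makes the Soundararajan-type saddle give the claimed constant.

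Both sums are evaluated by character orthogonality. The denominator is $\phi(q)\sum_{m\leq x,\,(m,q)=1}r(m)^2$, a truncated Euler product. For the numerator, since $xN\leq q$, the congruence $nm\equiv n'm'\pmod{q}$ with $nm,n'm'\leq xN$ reduces to the equality $nm = n'm'$. Parametrizing by $d = \gcd(m,m')$, $m = da$, $m' = db$ with $(a,b) = 1$, the equality $nm = n'm'$ forces $n = bc$, $n' = ac$ for some $c\leq N/\max(a,b)$, and using that $r$ is multiplicative on squarefrees the numerator reduces to
\[\phi(q)\,N\sum_d r(d)^2\sum_{(a,b)=1,\,(abd,q)=1}\frac{r(a)r(b)}{\max(a,b)} + \text{lower order terms},\]
where the lower order terms come from the $\lfloor\cdot\rfloor$-vs-$(\cdot)$ discrepancy in $\lfloor N/\max(a,b)\rfloor$ and from the edge $nm\approx q$ of the orthogonality step.

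The coefficients $r(p)$ are chosen of Soundararajan type: supported on a window of primes $p\in[y_0, y_1]$ and of the form $r(p)\propto 1/(\sqrt p\log p)$, with the window tuned so that the saddle of the resulting optimization sits at the parameter $L$. The key analytic step is to show
\[\frac{\sum_d r(d)^2\sum_{(a,b)=1,\,(abd,q)=1}r(a)r(b)/\max(a,b)}{\sum_m r(m)^2}\geq \exp(2(1+o(1))L),\]
which I would handle by writing $1/\max(a,b) = \int_0^\infty\mathbf{1}_{a,b\leq 1/u}\,du$, applying Mellin transforms to produce a (locally) Euler-product representation, and carrying out a saddle-point analysis in the spirit of Soundararajan's original resonance method. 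For the dual statement with $q$ prime, P\'olya's Fourier expansion for primitive $\chi$ converts the long sum $\sum_{n\leq q/N}\chi(n)$ into $\tau(\chi)/(2\pi i)$ times a short smoothed sum of $\overline\chi(n)/n$ for $n$ effectively of size at most $N$; since $|\tau(\chi)| = \sqrt q$, applying the same resonance method to this short sum yields a lower bound of the form $\sqrt{q/N}\exp((1+o(1))L')$, and the weights $1/n$ in the Parseval evaluation effectively halve the relevant exponent in the saddle, producing $L' = L/\sqrt 2$ in place of $L$.

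The main obstacle is the saddle-point analysis and the precise tuning of $r$. The challenge is to pin down the shape of $r(p)$ in the window $[y_0, y_1]$ so that the ratio above matches the exponent $L$ exactly (including constants), and to control -- uniformly in $\theta\in(0,1-\epsilon)$ and under the hypothesis $4\sqrt{\log q\log_2 q}\log_3 q < \log N$ -- the various tail and boundary errors: the Rankin-style tails of the truncated Euler products, the wrap-around at $nm = q$ in the orthogonality, and, in the dual case, the truncation and smoothing errors of the P\'olya Fourier expansion.
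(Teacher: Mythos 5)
Your overall strategy --- the second-moment version of the resonance method with resonator $R(\chi) = \sum_{m\leq x}r(m)\chi(m)$, $x = q/N$, $r$ multiplicative supported on squarefrees, and coefficients of Soundararajan type $r(p) \asymp 1/(\sqrt p\log p)$ on a window of primes near $\exp(\sqrt{\log x\log_2 x})$ --- is exactly what the paper uses (Propositions \ref{fund_second} and \ref{fund_second_dual}), and your parametrization of the diagonal via $d=(m,m')$ matches the paper's. However there are two substantive problems.

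First, the paper \emph{deliberately avoids} the Mellin/saddle-point analysis you propose: the whole point of introducing the second-moment version in this range is to sidestep saddle points. Once the problem is reduced to a lower bound for
\[
\sum_{\substack{(m_1,m_2)=1}}\frac{r(m_1)r(m_2)\,m_1 m_2}{\max(m_1,m_2)^3}\sum_{\substack{g\leq x/\max(m_1,m_2)\\ (g,m_1m_2)=1}}r(g)^2\Big/\prod_p(1+r(p)^2),
\]
the paper removes the inner $g$-sum by a single Rankin-type estimate (Lemma \ref{ingredients}), drops the coprimality $(m_1,m_2)=1$ by M\"obius inversion (Lemma \ref{elementary}), and bounds the resulting unrestricted double sum from below by splitting into $e$-adic blocks and applying Cauchy--Schwarz (Lemma \ref{main_extraction}):
\[
\sum_{m_1,m_2\leq z}\frac{t(m_1)t(m_2)m_1m_2}{\max(m_1,m_2)^3}\gg \sum_{0\leq k<\log z}\Biggl(\sum_{z/e^{k+1}<m\leq z/e^k}\frac{t(m)}{\sqrt m}\Biggr)^2\geq\frac{1}{\log z}\Biggl(\sum_{m\leq z}\frac{t(m)}{\sqrt m}\Biggr)^2.
\]
The exponent $L$ then comes out automatically from a Rankin-trick evaluation of $\sum_{m\leq z}t(m)/\sqrt m$; there is no delicate tuning of $r$ and no saddle to analyze. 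Your proposed route (writing $1/\max(a,b)$ as an integral and doing a Mellin/saddle analysis of the Euler product) could perhaps be made to work, but it is strictly heavier machinery than the proof requires, and since you yourself flag the saddle analysis as the unresolved ``main obstacle,'' the argument as written is incomplete.

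Second, your explanation of the $1/\sqrt 2$ in the dual bound is incorrect. It does \emph{not} come from the $1/h$ weights in the P\'olya expansion. After writing $\sum_{n\leq q/N}\chi(n)$ via P\'olya's formula with $h$ ranging up to $H = \sqrt{qN}\log q$, the resonator length must be constrained to $x \asymp q/H \asymp \sqrt{q/N}/\log q$ so that the congruence $m_1h_2\equiv m_2h_1\pmod q$ produced by orthogonality reduces to an equality. It is this halving of $\log x$ --- from $(1-\theta)\log q$ down to $\tfrac12(1-\theta)\log q(1+o(1))$ --- that gives the $1/\sqrt 2$, since the lower bound scales as $\exp\bigl((2+o(1))\sqrt{\log x/\log_2 x}\bigr)$. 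The weights $1/h$ contribute only the extra factor $m_1 m_2/\max(m_1,m_2)^2 = \min/\max\leq 1$ to the double sum, which on the $e$-adic blocks of Lemma \ref{main_extraction} is $\asymp 1$ and hence affects only the $o(1)$.
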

\noindent These bounds are substantially larger than those proved in
\cite{granville_soundararajan_large_char} Theorems 6, 7, and 8 for
$\Delta(N,q)$ in the corresponding range.  The improvement is most noticeable
when $N = q^\theta$ is a power of $q$; for this range
\cite{granville_soundararajan_large_char} Theorem 7 gives only $\Delta(N,q) >
\sqrt{N}(\log q)^{O(\theta^{-1})}$. Our bound is also larger
than the one for real characters in
\cite{granville_soundararajan_large_char} Theorem 10:
\[\Delta_{\bR}(N,q) \geq \sqrt{N} \exp\left((1 + o(1))\frac{\sqrt{\log
      q}}{\log_2 q}\right).\]

\section{The basic propositions and outline of proofs}
The basic proposition of the resonance method is the following.
\begin{prp}[Fundamental Proposition]\label{fundamental_prop}
 Let $N< \sqrt{q}$ and  $x = \frac{\phi(q)}{N}$.  Let $r(n)$ be a completely
multiplicative function satisfying $r(p) \geq 0$ and $p|q \Rightarrow r(p) =
0$. Set 
\begin{equation}\label{squares_condition} B = \frac{\sum_{n = 1}^\infty
r(n)^2}{\sum_{n \leq \frac{x}{N}} r(n)^2}.\end{equation}
Then
\[\Delta(N,q) \geq O(1) + \frac{1}{B}
\sum_{n \leq N} r(n) .\] Furthermore, let $M$ be minimal such that $\sum_{p \leq
M} \log p > \log q$; the bound remains valid if the restriction $p | q
\Rightarrow r(p) = 0$ is replaced with $p\leq M \Rightarrow r(p) = 0$.
\end{prp}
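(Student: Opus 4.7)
The plan is to implement the resonance method via the basic inequality (\ref{basic_inequal}), taking weights $w_\chi = |R(\chi)|^2$ where the \emph{resonator} is
\[
R(\chi) := \sum_{n \leq x} r(n) \chi(n).
\]
Since $r \geq 0$ the weights are non-negative, and writing $S_\chi := \sum_{n \leq N} \chi(n)$, one has
\[
\Delta(N, q) \geq \frac{\left|\sum_{\chi \neq \chi_0} S_\chi |R(\chi)|^2\right|}{\sum_{\chi \neq \chi_0} |R(\chi)|^2}.
\]
The argument then reduces to evaluating these two sums via character orthogonality, controlling the principal-character contribution, and extracting the main term using the definition of $B$.

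I would expand both numerator and denominator over all characters mod $q$ using the orthogonality relation $\sum_\chi \chi(m) \overline{\chi(b)} = \phi(q) \mathbf{1}[m \equiv b \pmod q,\ (mb, q) = 1]$. The critical arithmetic input is the hypothesis $N \cdot x = \phi(q) \leq q - 1$, which ensures that for $n \leq N$ and $a, b \leq x$ the congruence $na \equiv b \pmod q$ collapses to the equality $na = b$. Complete multiplicativity then turns $r(a) r(b) = r(a)r(n)r(a) = r(n) r(a)^2$, giving
\[
\sum_\chi S_\chi |R(\chi)|^2 = \phi(q) \sum_{\substack{n \leq N,\ a \leq x/n \\ (na,\, q) = 1}} r(n) r(a)^2, \qquad \sum_\chi |R(\chi)|^2 = \phi(q) \sum_{a \leq x} r(a)^2.
\]
Under the hypothesis $p \mid q \Rightarrow r(p) = 0$, the coprimality condition $(na, q) = 1$ is automatic on the support of $r(n) r(a)^2$.

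The main estimate is obtained by interchanging the order of summation and using $x/n \geq x/N$ for $n \leq N$ to get
\[
\sum_{n \leq N} r(n) \sum_{a \leq x/n} r(a)^2 \geq \left(\sum_{n \leq N} r(n)\right) \sum_{a \leq x/N} r(a)^2 = \frac{1}{B} \left(\sum_{n \leq N} r(n)\right) \sum_a r(a)^2.
\]
Since $\sum_{a \leq x} r(a)^2 \leq \sum_a r(a)^2$, the unrestricted ratio $\sum_\chi S_\chi |R(\chi)|^2 / \sum_\chi |R(\chi)|^2$ is at least $\frac{1}{B}\sum_{n \leq N} r(n)$. For the principal character, Cauchy--Schwarz gives $|R(\chi_0)|^2 \leq x \sum_{a \leq x} r(a)^2$, which together with $S_{\chi_0} \leq N$ and $Nx = \phi(q)$ shows that $S_{\chi_0} |R(\chi_0)|^2$ is bounded by the total denominator. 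Subtracting the principal-character terms from numerator and denominator therefore costs only an $O(1)$ additive loss in the ratio, yielding the stated bound.

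For the addendum, the coprimality $(na, q) = 1$ is no longer forced by the support of $r$ when $r(p)$ is only required to vanish for $p \leq M$, so the proof must estimate the omitted contribution. For each prime $p^* \mid q$ with $p^* > M$, the subsum of terms with $p^* \mid na$ factors through $r(p^*)$ by complete multiplicativity and reduces to a sum of the same type on a shorter range; the hypothesis $\prod_{p \leq M} p > q$ ensures that very few such primes exist, and I expect the aggregate error to be absorbed into the $O(1)$ correction. The principal obstacle in the proof is thus the bookkeeping required in the addendum to check that this coprimality error is genuinely negligible; the core resonance inequality itself is a direct consequence of orthogonality and positivity.
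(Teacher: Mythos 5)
For the main inequality your argument is correct and is in substance the paper's proof: you expand both sums by orthogonality of characters mod~$q$, observe that the condition $na \equiv b \pmod{q}$ together with $na \leq Nx = \phi(q) < q$ and $b \leq x < q$ collapses the congruence to the equality $na = b$, use complete multiplicativity to write $r(a)r(b) = r(n)r(a)^2$, bound $\sum_{a \leq x/n} r(a)^2 \geq \sum_{a \leq x/N} r(a)^2 = \frac{1}{B}\sum_a r(a)^2$, and control the principal-character contribution via Cauchy--Schwarz and $Nx = \phi(q)$.

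For the addendum, however, there is a genuine gap. You correctly note that when $r$ is only required to vanish for $p \leq M$, the coprimality $(na,q)=1$ forced by orthogonality is no longer automatic on the support of $r$. But the fix you sketch --- directly estimating the discarded subsum over primes $p^* \mid q$ with $p^* > M$ --- does not work as stated. The defining property of $M$, namely $\prod_{p \leq M} p > q$, does not imply that ``very few'' primes $>M$ divide $q$; it gives only a crude bound of order $\log q / \log M$ on their number, which may be large. More seriously, even a single such prime $p^*$ is fatal to your estimate: nothing prevents $r$ from concentrating nearly all of its mass on powers of $p^*$, in which case the terms with $p^* \mid na$ that orthogonality annihilates form almost the entire numerator, and no $O(1)$ absorption is possible. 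The paper sidesteps this entirely with an exchange argument you are missing: from $\prod_{p\leq M}p > q$ one deduces that the number $R$ of primes $>M$ dividing $q$ is at most the number $S$ of primes $\leq M$ \emph{not} dividing $q$; one then swaps the values $r(q_1),\dots,r(q_R)$ at the former primes with the values at $R$ of the latter, obtaining a new completely multiplicative function $\tilde r$ supported away from $q$. Since each swap moves mass from a larger prime to a smaller one, $\sum_{n\leq N}\tilde r(n) \geq \sum_{n\leq N} r(n)$ and the associated ratio $\tilde B \leq B$, so applying the already-proved first part to $\tilde r$ yields the stated bound for $r$ with nothing to estimate. You should use this swap rather than attempt a direct error bound.
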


\begin{rem} The following proof will show (essentially) that
\[\Delta(N, q) \geq \sum_{n \leq N} r(n) \left[\frac{\sum_{m \leq \frac{x}{n}}
r(m)^2}{\sum_{m \leq x}r(m)^2} \right]\] 
for all non-negative multiplicative functions $r$.  
In practice we will apply
Proposition \ref{fundamental_prop} with $B = 1+o(1)$ as $q \to \infty$ so that
we aim to solve the optimization problem
\begin{align}
\label{optimization} \text{Maximize:} & \qquad \sum_{n \leq N} r(n)\\
\label{constraint} \text{Subject to:} & \qquad \sum_{n \leq \frac{x}{N}} r(n)^2
= (1 + o(1))
\sum_n r(n)^2 
\end{align}
Here the constraint condition (\ref{constraint}) is closely related to the
condition at primes
\[ \sum_p \log p \frac{r(p)^2}{1-r(p)^2} <
\log x\]
via the saddle point method.  If we assume that the optimal choice for $r$ in
(\ref{optimization}) satisfies $\sum_{n \leq N}r(n) = N^\theta$, then 
$\sum_{n \leq \frac{N}{p}} r(n) \sim \left(\frac{N}{p}\right)^\theta$.  Thus
maximization of (\ref{optimization}) with respect to (\ref{constraint}) via
Lagrange multipliers leads to the heuristic solution
\[ \frac{r(p)}{(1-r(p)^2)^2} \doteq \lambda \frac{\log p}{p^\theta},\] which
guides our choice of resonator functions. 

One might reasonably wonder whether the imposed condition $B = 1 + o(1)$ is
superfluous; we could
instead suppose that $\sum_{n \leq x} r(n)^2 \sim
x^\alpha$. Then $ \sum_{n \leq
\frac{x}{m}} r(n)^2 \doteq 
\left(\frac{x}{n}\right)^\alpha$ so that we would instead obtain the
optimization problem
\begin{align*}
 \text{Maximize:} & \qquad \sum_{n \leq N} r(n)n^{-\alpha}\\
 \text{Subject to:} & \qquad \sum_{n \leq x} r(n)^2 \sim x^\alpha.
\end{align*}
Replacing $r$ with $\tilde{r}(n) = \frac{r(n)}{n^\alpha}$, this is is
subsumed in the previous optimization problem.
\end{rem}

\begin{proof}
 Define 'resonator' $R(\chi) = \frac{1}{\sqrt{\phi(q)}}\sum_{n\leq x} r(n)
\chi(n)$. Plainly 
\[\Delta(N, q) \geq \sum_{\chi \neq \chi_0}|R(\chi)|^2 \sum_{n \leq N} \chi(n)
\Bigg / \sum_{\chi} |R(\chi)|^2.\]  By orthogonality of characters, the
denominator is $\sum_{n \leq x} r(n)^2$.  Meanwhile the numerator is
\begin{align*}\sum_\chi |R(\chi)|^2 \sum_{n \leq N} \chi(N) - |R(\chi_0)|^2
\sum_{n \leq N} \chi_0(n) &= \sum_{n \leq N} \sum_{m \leq \frac{x}{n}} r(m)
r(mn) - O\left(\frac{xN}{\phi(q)} \sum_{n \leq x}r(n)^2\right) \\ & \geq
\left( \frac{1}{B} \sum_{n \leq N} r(n) - O(1) \right) \sum_{n \leq x} r(n)^2,
\end{align*}
which proves the first statement in the proposition.  

To prove the second
statement, let $r$ be any non-negative
completely multiplicative function supported on primes larger than $M$, and set
$B = \frac{\sum_{n}r(n)^2}{\sum_{n \leq \frac{x}{N}}r(n)^2}.$  Enumerate
\[\{p>M: p|q\} = \{q_1, ..., q_R\}, \qquad \qquad \{p \leq M: p \nmid q\} =
\{p_1, ..., p_S\}.\] Note that $S \geq R$. Now   swapping the values 
$r(p_1),..., r(p_R)$ with $r(q_1), ..., r(q_R)$ we define a new completely
multiplicative function $\tilde{r}$. Obviously $\tilde{r}$ is supported on
primes not dividing $q$, and also
\begin{align*}
 \sum_n \tilde{r}(n)^2  = \sum_n r(n)^2, &\qquad \sum_{n \leq \frac{x}{n}}
\tilde{r}(n)^2  \geq \sum_{n \leq \frac{x}{n}}r(n)^2 \qquad \Rightarrow \qquad
\frac{\sum_n \tilde{r}(n)^2}{\sum_{n \leq \frac{x}{n}}r(n)^2} \leq B\\
 \sum_{n \leq N} \tilde{r}(n) & \geq \sum_{n \leq N}r(n)
\end{align*}
The first part of the proposition applied to $\tilde{r}$ thus gives
\[\Delta(N,q) \geq O(1) + \frac{1}{B} \sum_{n \leq N} r(n).\]

\end{proof}

For primitive characters $\chi$ P\'{o}lya proved the Fourier
expansion\footnote{We use the notation $e(x) = e^{2\pi i x}$ and $c(x) =
\cos(2\pi x)$.} (see \cite{mont_vaughan_grh} Lemma 1)
\begin{equation}\label{polya_fourier}
 \sum_{n \leq \frac{q}{N}} \chi(n) = \frac{\tau(\chi)}{2\pi i}
\sum_{\substack{|h| \leq H\\ h\neq 0}} \frac{\overline{\chi}(h)}{h}(1 -
e(-\frac{h}{N})) + O(1 + qH^{-1}\log q).
\end{equation}
Choose $H = \sqrt{Nq} \log q$ and define $S(\chi) = \sum_{\substack{|h|\leq H\\
h \neq 0}} \frac{\overline{\chi}(h)}{h}(1 - c(\frac{h}{N}))$.  If
$\chi$ is even
($\chi(1) = 1$)  this vanishes, but for odd primitive characters $\chi$ we
 have 
\begin{equation}\label{S_of_x}\left|\sum_{n \leq N} \chi(n)\right| =
\frac{\sqrt{q}}{2\pi} |S(\chi)| +
O(\sqrt{\frac{q}{N}}).\end{equation}  Using this relation we now prove a dual
version of our
main proposition.

\begin{prp}[Fundamental Proposition, dual version]\label{fund_dual}
 Let $q$ prime, $N < \sqrt{q}$, $H = \sqrt{qN}\log q$, $x =
\frac{q}{2H}$ and $N'<H$ a parameter. Let $r$ be a
non-negative completely multiplicative function and put 
\[B_{N'} = \frac{\sum_n r(n)^2}{\sum_{n \leq \frac{x}{N'}}r(n)^2}.\] We have
\[\Delta(\frac{q}{N}, q) \geq \frac{\sqrt{q}}{\pi} \frac{1}{B_{N'}}\sum_{0 <
n
\leq N'} \frac{r(n)}{n}(1-c(\frac{n}{N})) + O(\sqrt{\frac{q}{N}}).\]
Specializing to $N' =
N/2$ we obtain
\[\Delta(\frac{q}{N},q) \gg \frac{\sqrt{q}}{N^2} \frac{1}{B_{N/2}} \sum_{n
\leq N}n r(n) + O(\sqrt{\frac{q}{N}}).\]
\end{prp}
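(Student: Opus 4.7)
The plan is to rerun the resonance argument of Proposition \ref{fundamental_prop}, but with the short character sum $\left|\sum_{n \leq q/N}\chi(n)\right|$ replaced on the dual side by $\tfrac{\sqrt{q}}{2\pi}|S(\chi)|$ via (\ref{S_of_x}), so that the relevant orthogonality computation takes place on the short range $|h| \leq H$.  Because $S(\chi) \equiv 0$ on even characters, it is natural to average only over odd non-principal $\chi$, which for $q$ prime is exactly half of the non-principal characters.  This restriction halves the denominator of the resonance ratio and is precisely what upgrades the prefactor from $\sqrt{q}/(2\pi)$ to the claimed $\sqrt{q}/\pi$.

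Concretely, set $R(\chi) = \phi(q)^{-1/2} \sum_{m \leq x} r(m)\chi(m)$ with $x = q/(2H)$; the mean inequality (\ref{basic_inequal}) applied to odd non-principal $\chi$ gives
\[
\Delta(q/N, q) \;\geq\; \frac{\sum_{\chi \text{ odd}, \chi \ne \chi_0} |R(\chi)|^2 \left|\sum_{n \leq q/N}\chi(n)\right|}{\sum_{\chi \text{ odd}} |R(\chi)|^2}.
\]
For the denominator, orthogonality yields $\sum_\chi |R(\chi)|^2 = \sum_{m \leq x} r(m)^2$ and $\sum_\chi \chi(-1)|R(\chi)|^2 = \sum_{m+m'\equiv 0\,(q)} r(m)r(m')$; the calibration $x = q/(2H)$ makes $m + m' \leq q/H < q$ for $H > 1$, so the second sum vanishes and $\sum_{\chi \text{ odd}}|R(\chi)|^2 = \tfrac{1}{2}\sum_{m \leq x}r(m)^2$.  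In the numerator, (\ref{S_of_x}) converts each $\left|\sum_{n \leq q/N}\chi(n)\right|$ into $\tfrac{\sqrt{q}}{2\pi}|S(\chi)|$ with an error $O(\sqrt{q/N})$ that passes through the average into the final error term, and the triangle inequality gives
\[
\sum_{\chi \text{ odd}} |R(\chi)|^2 |S(\chi)| \;\geq\; \left|\sum_\chi |R(\chi)|^2 S(\chi)\right|,
\]
where the right-hand sum has been freely extended to even characters since $S$ vanishes there.

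The signed sum $\sum_\chi |R(\chi)|^2 S(\chi)$ is then computed explicitly.  Expanding $|R|^2$ and applying orthogonality, one is led to the congruence $m \equiv m'h \pmod q$ with $m, m' \leq x$ and $0 < |h| \leq H$.  The key combinatorial point is that with $x = q/(2H)$ both $m$ and $|m'h|$ are bounded by $q/2$, so the only surviving cases are (i) $h > 0$ with $m = m'h$ exactly, and (ii) $h < 0$ with $m + m'|h| = q$; case (ii) would require $x(1+H) \geq q$, which fails for $H > 1$.  Complete multiplicativity $r(m'h) = r(m')r(h)$ then collapses the expression to the non-negative quantity
\[
\sum_\chi |R(\chi)|^2 S(\chi) \;=\; \sum_{0 < h \leq H} \frac{r(h)(1-c(h/N))}{h} \sum_{m' \leq x/h} r(m')^2.
\]
Truncating at $h \leq N'$ and using $\sum_{m' \leq x/h} r(m')^2 \geq \sum_{m' \leq x/N'} r(m')^2 = B_{N'}^{-1}\sum_{m'} r(m')^2 \geq B_{N'}^{-1}\sum_{m' \leq x} r(m')^2$, then dividing by $\sum_{\chi \text{ odd}}|R|^2 = \tfrac{1}{2}\sum_{m \leq x}r(m)^2$, the factors of $\tfrac12$ combine with $\tfrac{1}{2\pi}$ to give the first assertion.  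For the $N' = N/2$ specialization, apply the elementary bound $1 - c(n/N) = 2\sin^2(\pi n/N) \geq 8n^2/N^2$ on $0 < n \leq N/2$ to pass from $\sum r(n)(1-c(n/N))/n$ to $\gg N^{-2}\sum n\,r(n)$.

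The only genuinely technical step is the diagonalization of the orthogonality sum: one must verify that the calibration $x = q/(2H)$ precludes \emph{all} wraparound contributions, for both positive and negative $h$, so that $\sum_\chi |R(\chi)|^2 S(\chi)$ collapses cleanly onto a positive diagonal rather than picking up oscillatory correction terms.  Everything else --- the identity (\ref{S_of_x}), the factor-of-two bookkeeping from restricting to odd characters, the truncation at $h \leq N'$, and the trigonometric bound for the specialization --- is routine.
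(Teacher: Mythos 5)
Your proposal is essentially the same argument as the paper's: resonate against $S(\chi)$ obtained from P\'{o}lya's Fourier expansion, use orthogonality with the calibration $x = q/(2H)$ to kill all wraparound terms in the numerator, invoke complete multiplicativity to collapse to a diagonal sum over $h$, truncate at $N'$, and specialize via $1 - c(n/N) = 2\sin^2(\pi n/N) \geq 8n^2/N^2$. The combinatorial verification that $n_1 + n_2|h| < q$ for negative $h$ is exactly the point the paper signals with ``Since $x = \frac{q}{2H}$.''

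The one place you add genuine content is the factor-of-two bookkeeping. The paper's first line asserts $\Delta(q/N,q) + O(\sqrt{q/N}) \geq \frac{\sqrt{q}}{\pi}\sup_\chi |S(\chi)|$, yet (\ref{S_of_x}) only delivers $\frac{\sqrt{q}}{2\pi}|S(\chi)|$; the missing factor of $2$ is never accounted for in the written proof. You supply the justification: restrict the averaging to odd $\chi$ (which is harmless since $S$ vanishes on even characters and $\chi_0$ is even), and note that because $2x < q$ the sum $\sum_\chi \chi(-1)|R(\chi)|^2$ has no solutions to $m + m' \equiv 0 \pmod q$ with $m,m' \leq x$, so it vanishes and $\sum_{\chi \text{ odd}}|R(\chi)|^2 = \tfrac12 \sum_{m \leq x} r(m)^2$. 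This cleanly produces the $\tfrac{1}{\pi}$ in the statement. So your write-up is not a different route but a more complete one, filling in a step the paper leaves implicit (or, read literally, gets off by a constant; this does not affect any downstream application, which only uses $\gg$). Two very small remarks: you should also note that restricting to odd $\chi$ automatically excludes $\chi_0$, so the supremum is over admissible characters; and the paper's closing specialization displays $\sum_{n\leq N} n r(n)$ rather than $\sum_{n\leq N/2} n r(n)$ --- as a lower bound the latter is what you actually prove, and it is what the later applications use.
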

\begin{proof}
 Define, as before, `resonator' $R(\chi) = \frac{1}{\sqrt{q-1}}\sum_{n \leq x}
r(n) \chi(n)$. Then \[\Delta(N,q) + O(\sqrt{\frac{q}{N}}) \geq
\frac{\sqrt{q}}{\pi}\sup_\chi |S(\chi)| \geq \frac{\sqrt{q}}{\pi} \sum_{\chi}
|R(\chi)|^2 S(\chi) \Bigg / \sum_\chi |R(\chi)|^2.\]  The denominator is
$\sum_{n \leq x}r(n)^2$.  Since $x = \frac{q}{2H}$, the sum in the numerator is
\[\sum_{0 \neq |h| \leq H}\frac{(1 - c(\frac{h}{N}))}{h}\sum_{\substack{n_1, n_2
\leq x\\ n_1 h \equiv n_2 \bmod{q}}} r(n_1)r(n_2) = \sum_{0 < h \leq H}
\frac{r(h)(1-c(\frac{h}{N}))}{h} \sum_{n \leq \frac{x}{h}}r(n)^2.\]  Since all
terms in the numerator are positive, we can discard those $h > M$ to obtain the
result.
\end{proof}

\subsection{Outline of proofs, and lemmas}
After introduction of a 'resonator' multiplicative function $r(n)$, the
proofs of Theorems \ref{small_prime_theorem}-\ref{large_theorem} proceed in
the same three steps.

\begin{enumerate}
 \item [A.] Let $x$ be the length of the resonator and $N$ the length of the
sum.  We check that \[\sum_{n \leq \frac{x}{N}}r(n)^2 =
(1 + o(1)) \prod_p (1-r(p)^2)^{-1}\]
so that $B$ in Proposition \ref{fundamental_prop} may be
taken as $1 + o(1)$. 

 \item [B.] From the Fundamental Proposition and part A it follows  $\Delta(N,q)
\sim
\sum_{n \leq N} r(n)$.  We determine the asymptotic
shape of this sum via the Perron integral
\[\sum_{n \leq N} r(n) = \frac{1}{2\pi i} \int_{\sigma-i\infty}^{\sigma + i
\infty} R(s) N^s \frac{ds}{s}, \qquad \qquad R(s) = \prod_p (1 -
r(p)p^{-s})^{-1}\]
 and a saddle point calculation.
 \item [C.] We analyze the various implicitly defined parameters that arise in
the saddle point calculation of part B in order to obtain explicit lower bounds
for $\Delta(N,q)$.
\end{enumerate}

The bound in the first step (A) is accomplished by an appeal to the following
simple lemma.

\begin{lemma}\label{tail_bound_lemma}
 Let $f_i(n)$ be a sequence of non-negative, completely multiplicative
functions satisfying $f_i(n) < 1$, and let $y_i \to \infty$ be a growing
sequence of parameters. Define $\alpha_i = \frac{(\log \log y_i)^2}{\log
y_i}$.  Suppose
\[ \sum_p  \log p \frac{f_i(p)}{1-f_i(p)} < \log y_i - \frac{\log y_i}{\log
\log y_i}\]
and
\[\sum_{k \log p > \frac{\log y_i}{(\log \log y_i)^4}} f(p_i)^k p_i^{k\alpha} =
o(1), \qquad \qquad i \to \infty.\]
Then 
\[\sum_{n \leq y_i} f_i(n) = (1 + o(1)) \sum_{n = 1}^\infty f_i(n),
\qquad\qquad i \to \infty.\]
\end{lemma}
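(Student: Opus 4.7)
The plan is to apply Rankin's trick at exponent $\alpha_i$: since
\[ \sum_{n > y_i} f_i(n) \leq y_i^{-\alpha_i} \sum_n f_i(n) n^{\alpha_i}, \]
and both $\sum_n f_i(n)$ and $\sum_n f_i(n) n^{\alpha_i}$ factor as Euler products by complete multiplicativity, the lemma reduces to showing
\[ y_i^{-\alpha_i} \prod_p \frac{1-f_i(p)}{1 - f_i(p)\, p^{\alpha_i}} = o(1). \]
Taking logarithms and expanding $-\log(1-x) = \sum_{k \geq 1} x^k/k$, this is in turn equivalent to
\[ -\alpha_i \log y_i \;+\; \sum_p \sum_{k \geq 1} \frac{f_i(p)^k\,(p^{k \alpha_i} - 1)}{k} \;\longrightarrow\; -\infty. \]

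Next I would split the double sum according to the threshold appearing in the second hypothesis: the \emph{small} range $k \log p \leq Y_i := \log y_i/(\log \log y_i)^4$ and the \emph{large} range $k \log p > Y_i$. In the small range, $k\alpha_i \log p \leq 1/(\log\log y_i)^2 = o(1)$, so Taylor expansion yields $p^{k\alpha_i} - 1 = k \alpha_i \log p + O((k \alpha_i \log p)^2)$. The main term, after summing $f_i(p)^k/k \cdot k\alpha_i \log p$ over all $k$, is bounded by
\[ \alpha_i \sum_p \log p\, \frac{f_i(p)}{1-f_i(p)} \;<\; \alpha_i \log y_i - \frac{\alpha_i \log y_i}{\log \log y_i} \]
via the first hypothesis, while the quadratic remainder, bounded termwise using $\sum_{k\log p \leq Y_i} k f_i(p)^k \leq (Y_i/\log p)\cdot f_i(p)/(1-f_i(p))$, totals $O(\alpha_i^2 Y_i \log y_i) = O(1)$ thanks to the relation $\alpha_i Y_i = 1/(\log\log y_i)^2$. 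In the large range I would invoke the crude estimates $p^{k\alpha_i} - 1 \leq p^{k\alpha_i}$ and $1/k \leq 1$ to dominate the contribution by the sum in the second hypothesis, which is already $o(1)$.

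Assembling these pieces and using $\alpha_i \log y_i = (\log \log y_i)^2$, the target log expression is at most
\[ -(\log\log y_i)^2 + (\log\log y_i)^2 - \log \log y_i + O(1) \;=\; -\log \log y_i + O(1), \]
which diverges to $-\infty$. The delicate point, and the main obstacle to the argument, is the precise calibration of the three scales $\alpha_i$, $Y_i$, and the savings term $\log y_i/\log\log y_i$ in the first hypothesis so that the surviving negative contribution of size $\log\log y_i$ actually dominates the $O(1)$ Taylor remainder; this is exactly why the lemma specifies $\alpha_i = (\log \log y_i)^2/\log y_i$ and the cutoff $\log y_i/(\log\log y_i)^4$ in the second hypothesis. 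Any weaker savings in the first hypothesis or a larger $\alpha_i$ would not suffice to push the log expression to $-\infty$.
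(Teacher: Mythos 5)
Your proposal is correct and follows essentially the same argument as the paper: Rankin's trick at exponent $\alpha_i$, reduction to showing the logarithm of the error-to-main-term ratio tends to $-\infty$, splitting the resulting double sum over $p,k$ at the threshold $k\log p = \log y_i/(\log\log y_i)^4$, Taylor-expanding $p^{k\alpha_i}-1$ in the small range (with the first hypothesis controlling the main term and the calibration $\alpha_i Y_i = (\log\log y_i)^{-2}$ controlling the quadratic remainder at $O(1)$), and invoking the second hypothesis directly for the large range. The final balance $-(\log\log y_i)^2 + (\log\log y_i)^2 - \log\log y_i + O(1) \to -\infty$ matches the paper's conclusion exactly.
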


\begin{proof}
 By 'Rankin's trick,'
\[\sum_{n \leq y_i} f_i(n) = \sum_{n = 1}^\infty f_i(n) + O\left(y_i^{-\alpha_i}
\sum_{n = 1}^\infty f_i(n)n^{\alpha_i}\right).\] The logarithm of the ratio
between the error term and the main term is
\begin{equation}\label{log_ratio}-\alpha_i \log y_i + \log \prod_p \left(\frac{1
- f_i(p)}{1-f_i(p)p^{ \alpha_i}}\right).\end{equation}  The logarithm of the
infinite product is 
\begin{align*} \sum_{p, k} \frac{f_i(p)^k}{k} \left[p^{k \alpha_i} - 1\right]
&\leq \left(1 + O(\log\log^{-2}y_i)\right) \alpha_i \sum_{p} \log p
\frac{f_i(p)}{1-f_i(p)} \\ &+ \sum_{k \log p > \frac{\log y_i}{(\log\log
y_i)^4}} \frac{f_i(p)p^{k\alpha_i}}{k}\\ & \leq \alpha_i \left(\log y_i -
\frac{\log y_i}{\log \log y_i} + O(\frac{\log y_i}{(\log \log y_i)^2})\right) +
o(1);
\end{align*}
which proves that the quantity in (\ref{log_ratio}) tends to $-\infty$ as $i
\to \infty$.
\end{proof}

The analysis in the second step (B) closely follows the corresponding
analysis of smooth numbers contained in \cite{hildebrand_tenenbaum}.  We
briefly recall this theory and quote the results from it that we will need.

Recall that we set $\Psi(x,y) = \#\{n \leq x: p|n \Rightarrow p \leq y\}$ for the number of
$y$-smooth numbers less than $x$ and  set also
\[\zeta(s,y) = \prod_{p < y} (1-p^{-s})^{-1}\] for the corresponding generating
function.  Analysis of $\Psi(x,y)$ depends on the behavior of the logarithm of
$\zeta(s,y)$ and its first few derivatives,
\[\psi(s,y) = \log \zeta(s,y), \qquad \psi_j(s,y) = (-1)^j \frac{d^j}{ds^j}
\psi(s,y), \; j = 0, 1, 2, ...\] 
for $s$ near the saddle point $\alpha = \alpha(x,y)>0,$ solving
$\psi_1(\alpha, y) = \log x.$  The basic result is the following.

\begin{theorem}\label{smooth_saddle}
 Uniformly in the range $x \geq y \geq 2$, we have
\[\Psi(x,y) = \frac{x^\alpha \zeta(\alpha,y)}{\alpha \sqrt{2\pi
\psi_2(\alpha,y)}}\left\{1 + O\left(\frac{\log y}{\log x} + \frac{\log
y}{y}\right)\right\}.\]
\end{theorem}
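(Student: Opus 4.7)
My approach follows the standard saddle-point method for smooth numbers, as developed by Hildebrand and Tenenbaum. The plan is to apply Perron's formula on the vertical line $\Re(s) = \alpha$, chosen so that the integrand is stationary at $s=\alpha$, and then to evaluate the resulting contour integral by a local Gaussian approximation. Specifically, I would start from the truncated Perron formula (followed by partial summation to recover the sharp cutoff $n \leq x$) to write
\[\Psi(x,y) = \frac{1}{2\pi i} \int_{\alpha - iT}^{\alpha + iT} \zeta(s,y)\,\frac{x^s}{s}\, ds + \mathrm{error},\]
with saddle-point abscissa $\alpha = \alpha(x,y)>0$ determined by $\psi_1(\alpha,y) = \log x$. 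Writing $s = \alpha + i\tau$ and Taylor-expanding,
\[\log\zeta(\alpha + i\tau, y) = \psi(\alpha,y) - i\tau\,\psi_1(\alpha,y) - \tfrac12 \tau^2 \psi_2(\alpha,y) + O(|\tau|^3 \psi_3(\alpha,y)),\]
and combining with $x^{\alpha + i\tau} = x^\alpha e^{i\tau\log x}$, the linear terms in $\tau$ cancel by the defining equation for $\alpha$. One is left with a Gaussian-like integrand centered at $\tau=0$ with leading quadratic coefficient $\psi_2(\alpha,y)/2$.

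Next I would select a central-range cutoff $T_0$ satisfying $T_0^2 \psi_2(\alpha,y) \to \infty$ and $T_0^3 \psi_3(\alpha,y) \to 0$, so that on $|\tau|\leq T_0$ the cubic remainder is negligible and the integral collapses to
\[\frac{x^\alpha \zeta(\alpha,y)}{\alpha} \cdot \frac{1}{2\pi}\int_{-T_0}^{T_0} e^{-\tau^2 \psi_2(\alpha,y)/2}\,d\tau\,(1+o(1)) = \frac{x^\alpha \zeta(\alpha,y)}{\alpha\sqrt{2\pi\,\psi_2(\alpha,y)}}(1+o(1)),\]
which is precisely the claimed main term. For the complementary range $|\tau|>T_0$, I would need pointwise upper bounds for the ratio $|\zeta(\alpha+i\tau,y)|/\zeta(\alpha,y)$, derived from the identity
\[\log\left|\frac{\zeta(\alpha+i\tau,y)}{\zeta(\alpha,y)}\right| = -\sum_{p \leq y}\sum_{k \geq 1} \frac{1-\cos(k\tau\log p)}{k\,p^{k\alpha}},\]
exploiting that a positive proportion of primes $p \leq y$ force $\cos(\tau\log p)$ to be bounded away from $1$. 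This delivers enough uniform decay to make the tail integral negligible against the main term.

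The principal technical obstacle is precisely this tail estimate on $|\tau|>T_0$: it depends delicately on the Diophantine distribution of the fractional parts of $\tau \log p/(2\pi)$ as $p$ ranges over primes $\leq y$, and must be controlled uniformly throughout the wide range of $(x,y)$ claimed in the theorem. Hildebrand and Tenenbaum handle this by partitioning the $\tau$-range dyadically and applying careful arithmetic estimates for the characters $p \mapsto p^{i\tau}$; rather than reproduce that analysis here I would simply cite \cite{hildebrand_tenenbaum}. The composite error term $O(\log y/\log x + \log y/y)$ then emerges from aggregating the Perron truncation error, the cubic term in the Taylor expansion at the saddle, and the tail contribution.
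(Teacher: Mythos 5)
Your proposal is a fair sketch of the Hildebrand--Tenenbaum saddle-point argument, and you ultimately defer to \cite{hildebrand_tenenbaum} for the crucial tail estimate, which is exactly what the paper does: its entire proof is the citation to \cite{hildebrand_tenenbaum} Theorem~1.6. So the approaches coincide.
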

\begin{proof}
 This is \cite{hildebrand_tenenbaum} Theorem 1.6.
\end{proof}

The first stage in the proof of Theorem \ref{smooth_saddle} is to write
\[\Psi(x,y) = \frac{1}{2\pi i} \int_{\alpha - i \infty}^{\alpha + i \infty}
\zeta(s,y) x^s \frac{ds}{s}\] and to truncate the integral at some height $T$. 
The following lemma, which we use, is essentially the one given in
\cite{hildebrand_tenenbaum} to bound the error from truncation.
\begin{lemma}\label{truncation}
 Let $f(n)$ be a bounded, non-negative arithmetic function with Dirichlet
series $F(s) = \sum_{n = 1}^\infty \frac{f(n)}{n^s}$, which converges
absolutely at $s = \sigma_0>0$.  Uniformly in $x\geq 2$, $T \geq 1$,
$\sigma \geq \sigma_0$ and $0<\tau < T$ we have
\[\sum_{n \leq x} f(n) = \frac{1}{2\pi i}\int_{\sigma - i\tau}^{\sigma +
i\tau} F(s)
\frac{x^s}{s} ds + R\]
where $R$ is bounded by 
\[ R \ll 1+  x^{\sigma}F(\sigma)\left(T^{\frac{-1}{2}} +
 \sup_{\tau \leq t \leq T} \frac{|F(\sigma + it)|}{F(\sigma)}
\log T\right).\]
\end{lemma}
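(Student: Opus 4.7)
\emph{Plan.} The lemma is a two-step truncation of Perron's inversion formula. First, I would invoke a sharp effective form of Perron's formula to truncate the infinite Mellin contour at height $T$, obtaining the integral from $\sigma - iT$ to $\sigma + iT$ with an error of size $\ll 1 + x^\sigma F(\sigma)T^{-1/2}$. Second, I would split off the portion $\tau \leq |t| \leq T$ of this truncated integral and estimate it crudely by the supremum of $|F(\sigma + it)|$ on that range.

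\emph{Step 1: truncation at height $T$.} The starting point is the standard discontinuous-integral identity
\[
\mathbf{1}_{n \leq x} = \frac{1}{2\pi i}\int_{\sigma - iT}^{\sigma + iT} \frac{(x/n)^s}{s}\, ds + \varepsilon_n(T), \qquad |\varepsilon_n(T)| \ll \min\!\left(1,\, \frac{(x/n)^\sigma}{T|\log(x/n)|}\right),
\]
valid for $n \neq x$. Multiplying by $f(n)$ and summing yields
\[
\sum_{n \leq x} f(n) = \frac{1}{2\pi i}\int_{\sigma - iT}^{\sigma + iT} F(s) \frac{x^s}{s}\, ds + E_1, \qquad E_1 \ll \sum_n f(n) \min\!\left(1,\, \frac{(x/n)^\sigma}{T|\log(x/n)|}\right).
\]
I would bound the right-hand side by splitting the $n$-sum at the threshold $|\log(x/n)| \asymp T^{-1/2}$: terms with $|\log(x/n)| \geq T^{-1/2}$ contribute at most $x^\sigma F(\sigma)T^{-1/2}$ via the Mellin kernel bound, while the $O(x/\sqrt T)$ integers closer to $x$ are handled using the boundedness of $f$. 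This yields $E_1 \ll 1 + x^\sigma F(\sigma) T^{-1/2}$.

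\emph{Step 2: truncation to height $\tau$.} Now split the contour
\[
\int_{\sigma - iT}^{\sigma + iT} = \int_{\sigma - i\tau}^{\sigma + i\tau} + \int_{\tau \leq |t| \leq T},
\]
retaining the first piece as the stated main term. For the tails, factor out the supremum to obtain
\[
\left|\int_{\tau \leq |t| \leq T} F(\sigma + it) \frac{x^{\sigma + it}}{\sigma + it}\, dt\right| \leq 2\, x^\sigma \sup_{\tau \leq t \leq T}|F(\sigma + it)| \int_\tau^T \frac{dt}{|\sigma + it|}.
\]
Since $\sigma \geq \sigma_0 > 0$, the last integral is $\ll \log(T/\sigma_0) \ll \log T$, using the assumption $T \geq 1$ (splitting at $t = \sigma$ if $\tau < \sigma$). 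The tail contribution is therefore $\ll x^\sigma F(\sigma) \cdot \log T \cdot \sup_{\tau \leq t \leq T} |F(\sigma + it)|/F(\sigma)$, and combining with $E_1$ from Step 1 gives the lemma.

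\emph{Main obstacle.} The only subtle point is the $T^{-1/2}$ exponent in Step 1: the naive Mellin kernel estimate, applied uniformly, yields only a $T^{-1}\log T$-type bound because integers $n$ close to $x$ produce a singularity in $|\log(x/n)|^{-1}$. Obtaining the sharper exponent requires balancing the Mellin estimate against the trivial bound $\min(1,\cdot)\leq 1$ at the cutoff $|\log(x/n)| \asymp T^{-1/2}$, and then absorbing the resulting $O(x/\sqrt T)$ contribution from the bounded coefficients near $x$ into the advertised error $1 + x^\sigma F(\sigma)T^{-1/2}$. Everything else (contour splitting and sup estimation) is routine.
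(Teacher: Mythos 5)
Your Step 2 (splitting off $\tau \leq |t| \leq T$ and bounding by the supremum, with the $\log T$ coming from $\int_\tau^T dt/|\sigma+it|$) is routine and correct. The gap is in Step 1, precisely at the point you flag as the ``main obstacle'' but then wave past. After splitting the Perron error at $|\log(x/n)| \asymp \epsilon$, the far terms give $\ll x^\sigma F(\sigma)/(T\epsilon)$, while the near terms, using $\min(1,\cdot)\leq 1$ together with the boundedness of $f$, give $\ll 1 + x\epsilon$ (there are $\ll 1 + x\epsilon$ integers in $(xe^{-\epsilon},xe^{\epsilon})$). Optimizing $\epsilon$ yields $\ll 1 + \bigl(x^{1+\sigma}F(\sigma)/T\bigr)^{1/2}$, and this matches the advertised $x^\sigma F(\sigma)T^{-1/2}$ only when $x^{1-\sigma}\ll F(\sigma)$. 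With your specific choice $\epsilon = T^{-1/2}$, you are in effect asserting $x T^{-1/2}\ll 1 + x^\sigma F(\sigma)T^{-1/2}$, which is the same condition. That condition is not among the hypotheses of the lemma (for example $f$ supported on a sparse set with $\sigma_0 < 1$ violates it), so the ``absorption'' is not a bookkeeping step; it is the entire content of the $T^{-1/2}$ exponent, and the argument as written does not establish it.

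For comparison, the paper supplies no proof at all and simply cites \cite{hildebrand_tenenbaum}, Lemma 3.3. There the near-diagonal block $xe^{-\epsilon}<n<xe^{\epsilon}$ is handled by Cauchy--Schwarz in $n$, separating $\bigl(\sum_{n\asymp x}f(n)\bigr)^{1/2}$ from $\bigl(\sum_{n\asymp x}\min(1,(T|\log(x/n)|)^{-1})^{2}\bigr)^{1/2}\ll(1+x/T)^{1/2}$; this measures the error against the partial sum $\sum_{n\leq x}f(n)$ itself rather than against $x$, and the $T^{-1/2}$ form quoted in the paper is the resulting simplified bound. So the missing ingredient is a Cauchy--Schwarz (or similar averaging) device on the near-diagonal terms: the crude term-by-term bounds you use cannot produce the stated exponent without an unproved lower bound relating $F(\sigma)$ to $x^{1-\sigma}$.
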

\begin{proof}
 See \cite{hildebrand_tenenbaum} Lemma 3.3.
\end{proof}

For the purpose of making comparisons in Theorem \ref{small_prime_theorem} it will be sufficient for
us to know the asymptotic behavior of $\log \Psi(x,y)$.  In this case, the
behavior is well understood in a wide range of $x$ and $y$.  Set $u = \frac{\log
x}{\log y}$ and let $\rho(u)$ denote the Dickman-de Bruijn function. 

\begin{theorem}
 For any fixed $\epsilon > 0$ we have
\[\log(\Psi(x,y)/x) = \left\{1 + O(\exp\{-(\log
u)^{3/5-\epsilon}\})\right\}\log \rho(u)\]
uniformly in the range
\[ y \geq 2, \qquad 1\leq u \leq y^{1-\epsilon}.\]
\end{theorem}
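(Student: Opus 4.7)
The strategy is to start from the precise saddle point formula for $\Psi(x,y)$ recorded in Theorem \ref{smooth_saddle} and match it term by term against a corresponding saddle point asymptotic for the Dickman--de Bruijn function $\rho(u)$. Taking logarithms in Theorem \ref{smooth_saddle} gives
\[
\log\frac{\Psi(x,y)}{x} = (\alpha-1)\log x + \psi(\alpha,y) - \log\alpha - \tfrac12\log\bigl(2\pi\,\psi_2(\alpha,y)\bigr) + O\!\left(\frac{\log y}{\log x} + \frac{\log y}{y}\right),
\]
where $\alpha=\alpha(x,y)$ solves $\psi_1(\alpha,y)=\log x$. In the range $1 \le u \le y^{1-\epsilon}$ this remainder is much smaller than the target error, so the burden of proof is to compare the main terms on the right with $\log\rho(u)$.

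The next step is to locate $\alpha(x,y)$. Writing $\psi_1(\alpha,y)=\sum_{p<y}\tfrac{\log p}{p^\alpha - 1}$ and applying partial summation together with the prime number theorem in the Vinogradov--Korobov zero-free region yields
\[
(1-\alpha)\log y = \xi(u) + O\!\left(\exp\bigl(-(\log y)^{3/5-\epsilon}\bigr)\right),
\]
where $\xi(u)$ is the de Bruijn function defined by $e^{\xi(u)}=1+u\xi(u)$ for $u>1$ and $\xi(1)=0$. A parallel analysis of $\psi(\alpha,y)$ and $\psi_2(\alpha,y)$, again using the PNT with remainder, then leads to
\[
\log\frac{\Psi(x,y)}{x} = -u\xi(u) + \int_0^{\xi(u)}\frac{e^t - 1}{t}\,dt + O(\log u) + \bigl(\log\rho(u)\bigr)\cdot O\!\left(\exp\bigl(-(\log y)^{3/5-\epsilon}\bigr)\right).
\]

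The final ingredient is a matching asymptotic for $\rho$ itself. Starting from the Laplace transform identity
\[
\int_0^\infty \rho(v)\,e^{-vs}\,dv = \exp\!\left(\gamma + \int_0^{-s}\frac{e^t - 1}{t}\,dt\right),
\]
one applies Laplace's method at the saddle $s=-\xi(u)$ to obtain
\[
\log\rho(u) = -u\xi(u) + \int_0^{\xi(u)}\frac{e^t - 1}{t}\,dt + O(\log u).
\]
Substituting this into the previous display, the principal terms cancel; using $\log \rho(u) \asymp -u\log u$ to absorb the $O(\log u)$ into the relative error, the claimed bound $O(\exp(-(\log u)^{3/5-\epsilon}))$ emerges, provided one uses that $u\le y^{1-\epsilon}$ so that $(\log y)^{3/5-\epsilon}\ge (\log u)^{3/5-\epsilon'}$ for any $\epsilon' > \epsilon$.

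The main obstacle is the careful bookkeeping required to transport the Vinogradov--Korobov remainder through the logarithm, through the implicit inversion defining $\xi(u)$ from $\alpha$, and through both of the saddle point integrals that appear. The boundary case $u$ close to $y^{1-\epsilon}$ is the delicate one, because the error in $1-\alpha$ must remain smaller than the leading term $\xi(u)/\log y \sim \log u/\log y$; it is precisely this constraint that forces the hypothesis $u \le y^{1-\epsilon}$.
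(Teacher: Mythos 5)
The paper does not actually prove this statement: it is quoted verbatim as Hildebrand–Tenenbaum, Theorem 1.2, and the ``proof'' in the text is simply the citation. So there is no in-paper argument to compare against. What you have produced is a sketch of the proof that the cited source itself gives, deducing Theorem 1.2 from the sharp saddle-point formula (Theorem \ref{smooth_saddle}, which is HT Theorem 1.6), and this is a reasonable and substantively correct plan: the identification $(1-\alpha)\log y = \xi(u) + O(\exp(-(\log y)^{3/5-\epsilon}))$ via the PNT in the Vinogradov--Korobov region, the parallel evaluation of $\psi(\alpha,y)$ and $\psi_2(\alpha,y)$, and the Laplace-method evaluation of $\rho(u)$ from the transform identity $\int_0^\infty\rho(v)e^{-vs}\,dv = \exp(\gamma + \int_0^{-s}(e^t-1)/t\,dt)$ are exactly the ingredients used there. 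Your observation that the $O(\log u)$ terms can be absorbed because $|\log\rho(u)| \asymp u\log u$, and that $u \le y^{1-\epsilon}$ is what transfers the $(\log y)^{3/5-\epsilon}$ error into a $(\log u)^{3/5-\epsilon}$ one, are the right closing steps.

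Two small cautionary notes for turning the sketch into a proof. First, the $\log\log y$ contributions from $\psi(\alpha,y)$ and from $-\tfrac12\log\psi_2(\alpha,y)$ must be seen to cancel (they combine to $\tfrac12\log\log y - \tfrac12\log\log x = -\tfrac12\log u$, which is then absorbed into your $O(\log u)$); your outline glosses over this, and without the cancellation the argument would produce a spurious $\log\log y$ that is not controlled by $\log u$ when $u$ is small relative to $y$. Second, the multiplicative error in Theorem \ref{smooth_saddle} is $O(\log y/\log x)=O(1/u)$, which is $\Theta(1)$ when $u$ is bounded; since $\log\rho(u)\to 0$ as $u\to 1^+$, the small-$u$ range must be handled separately (as Hildebrand--Tenenbaum in fact do), not by the saddle-point bound alone. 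Neither of these invalidates the plan, but both need to be addressed explicitly in the bookkeeping you flag at the end.
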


\begin{proof}
 This is \cite{hildebrand_tenenbaum} Theorem 1.2.
\end{proof}

\begin{theorem}
 For $u \geq 1$ we have
\[\rho(u) = \exp\left\{-u\left(\log u + \log_2(u+2) -1 +
O\left(\frac{\log_2(u+2)}{\log u}\right)\right)\right\}.\]
\end{theorem}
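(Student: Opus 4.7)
The plan is to prove the asymptotic via the saddle-point method applied to the Laplace transform of $\rho$. Recall that $\rho$ satisfies the delay-differential equation $u\rho'(u) = -\rho(u-1)$ for $u>1$ with initial condition $\rho\equiv 1$ on $[0,1]$. A standard computation shows that its Laplace transform admits the representation
\[
\hat{\rho}(-\xi) = \int_0^\infty \rho(u)e^{\xi u}\,du = \exp\!\left(\gamma + \int_0^\xi \frac{e^t-1}{t}\,dt\right),
\]
valid (by analytic continuation) for all $\xi \in \bR$, so that by Mellin inversion $\rho(u) = \frac{1}{2\pi i}\int_{c-i\infty}^{c+i\infty} e^{-u s}\hat\rho(s)\,ds$ for any $c>0$.

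First I would shift the contour to pass through the real saddle point $s = -\xi(u)$, where $\xi(u)>0$ is defined by the saddle-point equation $\frac{d}{d\xi}\bigl(I(\xi) - u\xi\bigr) = 0$, i.e.\ by $e^\xi = 1 + u\xi$, with $I(\xi) := \int_0^\xi \frac{e^t-1}{t}\,dt$. This equation, rewritten as $\xi = \log(1+u\xi)$, admits a unique positive solution for $u\geq 1$; bootstrapping from $\xi \sim \log u$ yields
\[
\xi(u) = \log u + \log_2 u + O\!\left(\frac{\log_2 u}{\log u}\right).
\]
A standard saddle-point estimate (handling the tails using the decay of $|\hat\rho(-\xi+it)|$ coming from the second derivative $I''(\xi) = e^\xi/\xi - (e^\xi-1)/\xi^2 \asymp u$, so the width of the saddle is of order $u^{-1/2}$) produces
\[
\log \rho(u) = -u\xi(u) + I(\xi(u)) + O(\log u).
\]

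Next I would evaluate $I(\xi)$ asymptotically using integration by parts: splitting at $t=1$ and applying $\int_1^\xi e^t/t\,dt = e^\xi/\xi + O(e^\xi/\xi^2)$, one gets $I(\xi) = e^\xi/\xi - \log\xi + O(1) + O(e^\xi/\xi^2)$. Using the saddle-point equation $e^\xi = 1 + u\xi$ this simplifies to $I(\xi) = u + O(u/\xi)$, so
\[
\log \rho(u) = -u\xi + u + O\!\left(\frac{u}{\xi} + \log u\right) = -u\bigl(\xi(u) - 1\bigr) + O\!\left(\frac{u}{\log u}\right).
\]
Substituting the expansion of $\xi(u)$ gives
\[
\log \rho(u) = -u\!\left(\log u + \log_2 u - 1 + O\!\left(\frac{\log_2 u}{\log u}\right)\right),
\]
which is the claimed asymptotic for $u$ large. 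For $1\le u \le u_0$ with $u_0$ any fixed constant, $\rho(u)$ is bounded between positive constants, and the shift from $\log_2 u$ to $\log_2(u+2)$ absorbs this range into the $O$-term.

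The main obstacle is the saddle-point step: one must verify that the contribution from $|t| \gg u^{-1/2}\log u$ on the shifted contour is negligibly small. This requires quantitative bounds on $\Re\bigl(I(-\xi+it) - I(-\xi)\bigr)$, obtained from $\Re((e^{it}-1)/t)$ being bounded away from zero on an interval, together with elementary convexity of $I$. Once this is established, everything else is routine asymptotic analysis of the explicit saddle-point equation $e^\xi = 1 + u\xi$.
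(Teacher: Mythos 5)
Your proposal is correct and reconstructs the classical de Bruijn saddle-point proof via the Laplace-transform identity $\int_0^\infty \rho(u)e^{\xi u}\,du = \exp(\gamma + \int_0^\xi (e^t-1)/t\,dt)$; the paper itself simply defers to Hildebrand--Tenenbaum (their Corollary 2.3), where precisely this argument is carried out. The only blemish is a sign slip in the inversion formula, which should read $\rho(u) = \frac{1}{2\pi i}\int_{c-i\infty}^{c+i\infty} e^{us}\hat\rho(s)\,ds$ (consistent with the rest of your analysis, which correctly uses $e^{-u\xi + I(\xi)}$ at the saddle $s=-\xi(u)$).
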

\begin{proof}
 This is \cite{hildebrand_tenenbaum} Corollary 2.3.
\end{proof}

In particular we have the following simple lemma.

\begin{lemma}\label{log_Psi_change}
 Let $u = \frac{\log x}{\log y}$ as above.  When $u < \sqrt{y}$ and for
$|\kappa| < 1$ we have
\begin{equation}\label{kappa_smooth}\log \frac{\Psi(x, e^\kappa y)}{\Psi(x,y)} =
\left(\frac{\kappa+ O(\log^{-1}u)}{\log y}\right) u (\log u +
\log_2(u+2)).\end{equation}
\end{lemma}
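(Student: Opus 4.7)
The plan is to reduce the problem to comparing the Dickman--de Bruijn function at two nearby values of $u$. The two preceding results are the key inputs: the approximation $\log(\Psi(x,y)/x) = (1 + O(\exp(-(\log u)^{3/5-\epsilon})))\log \rho(u)$ valid for $u \leq y^{1-\epsilon}$, and the asymptotic $\log\rho(u) = -u(\log u + \log_2(u+2) - 1 + O(\log_2(u+2)/\log u))$ valid for $u \geq 1$.  Replacing $y$ by $y' = e^\kappa y$ shifts $u = \log x/\log y$ to $u' = \log x/(\log y + \kappa)$; for $|\kappa| < 1$ and $u < \sqrt y$ (so $\log y > 2\log u$), a geometric-series expansion yields
\[
u' - u \;=\; -\frac{u\kappa}{\log y} + O\!\left(\frac{u}{\log^2 y}\right).
\]

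Next I would estimate $\log \rho(u') - \log \rho(u)$ by a Taylor expansion around $u$.  Direct differentiation of the Dickman asymptotic gives
\[
\frac{d}{du}\log\rho(u) \;=\; -(\log u + \log_2(u+2)) + O\!\left(\frac{\log_2(u+2)}{\log u}\right),
\]
which, paired with the expression for $u'-u$ above, produces the desired main term $(\kappa/\log y)\cdot u(\log u + \log_2(u+2))$.  Three secondary contributions appear --- the derivative error, the $O(u/\log^2 y)$ tail of $u'-u$, and the quadratic Taylor remainder $(u'-u)^2 (\log\rho)''$, whose size is $O(1/u)$ --- and I would verify that each collapses to $O(u(\log u + \log_2(u+2))/(\log u \log y))$ using $\log y > 2\log u$, matching precisely the stated error.

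Finally I would translate back from $\log \rho$ to $\log(\Psi/x)$ by applying the first theorem at both $(x,y)$ and $(x,y')$.  This introduces a further discrepancy of size $|\log \rho(u)|\cdot\exp(-(\log u)^{3/5-\epsilon}) \ll u\log u \cdot \exp(-(\log u)^{3/5-\epsilon})$, negligible once $u$ exceeds an absolute constant; for bounded $u$ the lemma is trivial because $O(1/\log u)$ is itself a positive constant that absorbs the full right-hand side.  The main obstacle I anticipate is purely bookkeeping: the three error sources --- $\exp(-(\log u)^{3/5-\epsilon})$ from the $\Psi$--$\rho$ comparison, the saddle-type $O(\log_2(u+2)/\log u)$ in the Dickman asymptotic, and the quadratic tail of the $u'-u$ expansion --- must all be verified to fit inside the single factor $(O(1/\log u))/\log y \cdot u(\log u + \log_2(u+2))$.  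The hypothesis $u<\sqrt y$ (rather than merely $u \leq y^{1-\epsilon}$) enters specifically to secure $\log y > 2\log u$, which is what controls the quadratic tail.
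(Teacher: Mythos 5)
Your reduction to the Dickman function is the natural first move given the two Hildebrand--Tenenbaum results quoted immediately above the lemma, and the algebra for $u'-u$ is fine (the quadratic Taylor remainder you call ``$O(1/u)$'' is actually $(u'-u)^2/u \asymp u\kappa^2/\log^2 y$, not $O(1/u)$, though it is still absorbed). But the argument as sketched has two real gaps, both coming from treating the cited $O$-statements as if they were differentiable identities.

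First, you write $\frac{d}{du}\log\rho(u) = -(\log u + \log_2(u+2)) + O(\log_2(u+2)/\log u)$ by ``direct differentiation'' of the asymptotic $\log\rho(u) = -u(\log u + \log_2(u+2) - 1 + O(\log_2(u+2)/\log u))$. You cannot differentiate an $O$-term. What you actually control is $\log\rho(u') - \log\rho(u)$ via the stated asymptotic at the two points, and the $u\cdot O(\log_2(u+2)/\log u)$ term need not cancel between $u$ and $u'$: a priori it contributes an error as large as $O(u\log_2(u+2)/\log u)$, which exceeds the claimed error $O\bigl(u(\log u + \log_2(u+2))/(\log y\log u)\bigr)$ by roughly a factor $\log y$. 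Second, the error from Theorem~1.2 in passing between $\log(\Psi/x)$ and $\log\rho$ is \emph{relative}, so it contributes an absolute error of size $\exp(-(\log u)^{3/5-\epsilon})\,|\log\rho(u)| \asymp \exp(-(\log u)^{3/5-\epsilon})\,u\log u$. For this to sit inside the claimed error one needs roughly $\log y\,\log u \ll \exp((\log u)^{3/5-\epsilon})$, which does \emph{not} follow from $u < \sqrt{y}$ (e.g.\ $u < \sqrt{y}$ is compatible with $\log y$ being super-exponential in $\log u$). Your remark that this error is ``negligible once $u$ exceeds an absolute constant'' misses the dependence on $\log y$. Both gaps close if you instead work directly from Theorem~\ref{smooth_saddle} (\cite{hildebrand_tenenbaum} Theorem 1.6), whose main term $x^\alpha\zeta(\alpha,y)/(\alpha\sqrt{2\pi\psi_2(\alpha,y)})$ is a genuinely smooth function of $y$ (so differences can be estimated by the mean value theorem applied to $\alpha(x,y)$, $\psi_0(\alpha,y)$, etc.) and whose multiplicative error $1 + O(\log y/\log x + \log y/y)$ contributes only $O(\log y/\log x)$ additively to $\log\Psi$, which is comfortably inside the target.
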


\section{Short character sums to prime moduli, Proof of Theorem \ref{small_prime_theorem}}
We dispose of quickly the case of small $N$, $\log N < \log_2^2 q
\log_3^{-10} q$
for both $\Delta(N, q)$ and $\Delta(\frac{q}{N},q)$.  The main work of Theorem
\ref{small_prime_theorem} will then be to consider the range $\log_2^2 q\log_3^{-10} q <
\log N =
o(\sqrt{\log q \log_2 q})$.

\subsection{Case of small $N$} When $\log N <
\log_2^2 q \log_3^{-10} q$ notice that for all $n \leq N \log q$, $d(n)
\ll \log_2^2 q \log_3^{-11} q$.  Thus choosing \[r(p) =
\left\{\begin{array}{lll} 1 - (\log_2 q)^{-2} && 1 < p \leq \frac{\log
q}{\log_2^5 q} \\ 0 && \text{otherwise}\end{array}\right.,\] we verify
\[\sum_p \log p \frac{r(p)^2}{1-r(p)^2} \leq \log_2^4 q \sum_{p < \frac{\log
q}{\log_2^5 q}} \log p \ll \frac{\log q}{\log_2 q}\]
and
\[\sum_{k \log p > \frac{\frac{1}{4} \log q}{\log_2^4 q}}
r(p)^{2k}p^{k
\frac{\log_2^2 q}{\frac{1}{4} \log q}} \ll \log q (1 - \log_2^2
q)^{\frac{2\log
q}{\log_2^6}} = o(1),\]
so that by Lemma \ref{tail_bound_lemma}, $\sum_{n \leq q^{\frac{1}{4}}} r(n)^2
= (1 + o(1)) \prod_p (1 - r(p)^2)^{-1}$.
Furthermore, for all $n < N \log q$ such that $p|n \Rightarrow p < \frac{\log
q}{\log_2^3 q}$ we have $r(n) \gg 1$.

Choosing $x = \frac{q}{N}$, we have $\sum_{n \leq \frac{x}{N}}r(n)^2 = (1 +
o(1)) \sum_n r(n)^2$, and so by Proposition \ref{fundamental_prop}
\[\Delta(N,q) \geq (1 + o(1)) \sum_{n < N} r(n) \geq (1 + o(1)) \Psi(N,
\frac{\log q}{\log_2^3 q}) \geq (1 + o(1))\Psi(N, \log q).\]

Choosing $x = \frac{1}{\log q} \sqrt{\frac{q}{N}}$ and setting $N' = N \log
q$, we have $\sum_{n \leq \frac{x}{N'}}r(n)^2 = (1 + o(1)) \sum_n r(n)^2$, and
so by Proposition \ref{fund_dual} we have
\begin{align*} \Delta(\frac{q}{N}, q) &\gg \sqrt{q}\sum_{n \leq N \log q}
\frac{r(n)}{n}(1 - c(\frac{n}{N}))\\&\gg \sqrt{q} \sum_{A = 1}^{\log q}
\sum_{\frac{N}{4} < h < \frac{3N}{4}}\frac{r(AN + h)}{AN + h} \gg
\frac{\sqrt{q}}{N} \sum_{A = 1}^{\log q} \frac{1}{A} \sum_{\frac{N}{4} < h <
\frac{3N}{4}} r(AN + h) .\end{align*}
It now follows as in  \cite{granville_soundararajan_large_char}, ('Proof of Theorem 11', p. 394) that
\[\Delta(\frac{q}{N}, q)  \gg \frac{\sqrt{q}}{N} \Psi(N, \log q) \frac{\log
\log q}{\log \left(\frac{\log N}{\log \log q}\right)}.\]
This completes the proof of Theorem \ref{small_prime_theorem} in the
case $\log N < \log_2^2 q \log_3^{-10}q$.    

\subsection{Main case}

Henceforth we assume that $\log_2^2 q \log_3^{-10} q < \log N = o(\sqrt{\log q
\log_2
q})$.  We are going to describe the analysis of $\Delta(N,q)$ in detail. 
Afterwards we will sketch the necessary modifications in order to handle the
dual case of $\Delta(\frac{q}{N}, q)$.  

Throughout the treatment of $\Delta(N,q)$  we fix $x = \frac{q-1}{N}$.  
Let $\epsilon =
\epsilon(q) > 0$ be a parameter tending to 0 as $q \to \infty$ and set $M =
(1-\epsilon)\log q$.  We let $\sigma = \sigma(N,q),$ $\frac{1}{2}+\frac{1}{\log
_2 q}< \sigma < 1$
be another parameter which will eventually be the location of the relevant
saddle point. We define completely multiplicative 'resonator' function
$r_\sigma(n)$ by
\[r_\sigma(p) = f_\sigma(\frac{p}{M})\] where $0<f_\sigma(x)<1$ is the unique
continuous solution to the equations
\[ \frac{f_\sigma(x)}{(1-f_\sigma(x)^2)^2} =
\left(\frac{c_\sigma}{x}\right)^\sigma, \qquad \qquad \int_0^\infty
\frac{f_\sigma(x)^2}{1-f_\sigma(x)^2}dx = 1.\]
Note that the second equation implicitly defines the constant $c_\sigma$.
The following basic properties of the function $f_\sigma$ may be established
with a little calculus.

\begin{lemma}\label{f_facts}
For each $\sigma \in (\frac{1}{2},1)$, the function $f_\sigma$ satisfies the
following properties.
\begin{enumerate}
 \item $f_\sigma$ is smooth, decreasing, and a bijection $(0,\infty) \to (0,1)$.
 \item $f_\sigma(x) \leq \left(\frac{c_\sigma}{x}\right)^\sigma$
 \item $\min\left(\frac{1}{2}, \frac{1}{4}
\left(\frac{x}{c_\sigma}\right)^{\frac{\sigma}{2}}\right) \leq 1 - f_\sigma(x)
\leq \left(\frac{x}{c_\sigma}\right)^{\frac{\sigma}{2}}$
\item The value of $c_\sigma$ is
\[c_\sigma =
\frac{\Gamma(\frac{3}{2\sigma})}{\Gamma(1 -
\frac{1}{2\sigma})\Gamma(\frac{2}{\sigma}-1)} \]
In particular, $c_\sigma <1$, $c_\sigma \to \frac{1}{2}$ as $\sigma \uparrow 1$
and $c_\sigma \sim (2\sigma - 1)$ as $\sigma \downarrow \frac{1}{2}$.
\item The Mellin transform of $f_\sigma$ is
\[\hat{f}_\sigma(s) =\int_0^\infty f_\sigma(x) x^{s-1}dx=
\frac{c_\sigma^s}{2\sigma}\frac{\Gamma(\frac{1}{2} -
\frac{s}{2\sigma})\Gamma(\frac{2s}{\sigma} + 1)}{\Gamma(\frac{3}{2} +
\frac{3s}{2\sigma})},\]
which converges absolutely in $0 < \Re(s) < \sigma$.  In particular
\[\hat{f}_\sigma(1-\sigma) = \frac{1}{2(1-\sigma)}c_\sigma^{-\sigma}.\]
\item The Mellin transform of $g_\sigma(x) =
\frac{f_\sigma(x)^2}{1-f_\sigma(x)^2}$ is
\[\hat{g}(s) = \int_0^\infty g(x)x^{s-1}dx = \frac{c_\sigma^s}{s}\frac{\Gamma(1
-\frac{s}{2\sigma})\Gamma(\frac{2s}{\sigma} -
1)}{\Gamma(\frac{3s}{2\sigma})}.\]  This integral converges absolutely in
$\frac{\sigma}{2} < \Re(s) < 2\sigma$.
\end{enumerate}

\end{lemma}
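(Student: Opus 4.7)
The plan is to treat parts (1)--(3) as elementary consequences of the defining equation, to compute the Mellin transforms in (5) and (6) by a single change of variable, and then to read off (4) from (6) evaluated at $s=1$.

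For (1)--(3), introduce $\phi(y) = y/(1-y^2)^2$; its derivative $(1+3y^2)/(1-y^2)^3$ is strictly positive on $(0,1)$, so $\phi$ is a smooth bijection onto $(0,\infty)$, and $f_\sigma(x) = \phi^{-1}((c_\sigma/x)^\sigma)$ is smooth, strictly decreasing, and maps $(0,\infty)$ onto $(0,1)$.  Part (2) is immediate from $(1-f_\sigma^2)^2 \leq 1$.  For (3) I would rearrange the defining equation as $1 - f_\sigma(x)^2 = f_\sigma(x)^{1/2}(x/c_\sigma)^{\sigma/2}$; the upper bound then follows from $1 - f_\sigma \leq 1 - f_\sigma^2$, and the lower bound follows from $1 - f_\sigma = (1-f_\sigma^2)/(1+f_\sigma)$ together with a case split: if $f_\sigma \leq 1/2$ then $1 - f_\sigma \geq 1/2$ directly, while if $f_\sigma \geq 1/2$ then $\sqrt{f_\sigma} \geq 1/\sqrt{2}$ yields the second branch of the minimum.

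The main work is the Mellin transform calculation.  The change of variable $y = f_\sigma(x)$ yields
\[x = c_\sigma\, y^{-1/\sigma}(1-y^2)^{2/\sigma}, \qquad \frac{dx}{dy} = -\frac{x(1+3y^2)}{\sigma\, y(1-y^2)},\]
with $y$ running from $1$ down to $0$ as $x$ traverses $(0,\infty)$.  Both $\hat{f}_\sigma(s)$ and $\hat{g}_\sigma(s)$ then reduce to integrals on $(0,1)$ weighted by the Jacobian factor $(1+3y^2)$, and the substitution $u = y^2$ splits each into a sum of two Beta integrals $B(a,b) = \Gamma(a)\Gamma(b)/\Gamma(a+b)$.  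The convergence strips $0 < \Re(s) < \sigma$ and $\sigma/2 < \Re(s) < 2\sigma$ are read off from the endpoint exponents.  The main obstacle is the subsequent gamma-function bookkeeping: using $\Gamma(w+1) = w\Gamma(w)$, the two Beta integrals are placed over a common denominator, and the algebraic identity
\[1 + \frac{3(1/2 - s/(2\sigma))}{1/2 + 3s/(2\sigma)} = \frac{2}{1/2 + 3s/(2\sigma)}\]
collapses the numerator into the single stated ratio of three Gamma functions.

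Once (5) and (6) are established, part (4) is a corollary of (6) evaluated at $s = 1$: the normalization condition in the definition of $f_\sigma$ is exactly $\hat{g}_\sigma(1) = 1$, which yields
\[c_\sigma = \frac{\Gamma(3/(2\sigma))}{\Gamma(1 - 1/(2\sigma))\,\Gamma(2/\sigma - 1)}.\]
The asymptotics follow from standard Gamma values.  At $\sigma = 1$, $\Gamma(3/2)/[\Gamma(1/2)\Gamma(1)] = 1/2$.  As $\sigma \downarrow 1/2$, the argument $1 - 1/(2\sigma) = (2\sigma-1)/(2\sigma)$ tends to zero; the simple pole of $\Gamma$ at $0$ gives $\Gamma(1 - 1/(2\sigma)) \sim 1/(2\sigma - 1)$, while the two remaining Gamma factors approach $\Gamma(3) = 2$, whence $c_\sigma \sim (2\sigma - 1)$.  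The bound $c_\sigma < 1$ on $(1/2, 1)$ is then a direct inspection of the closed form using standard Gamma function estimates.
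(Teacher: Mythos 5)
Your proposal is correct and follows essentially the same route as the paper, whose entire proof is the one-line remark that the integrals follow from substituting $\frac{dx}{x} = -\frac{1}{\sigma}\left[\frac{4f}{1-f^2}+\frac{1}{f}\right]df$; your $\frac{dx}{dy} = -\frac{x(1+3y^2)}{\sigma y(1-y^2)}$ is an equivalent form of the same change of variable $y = f_\sigma(x)$, and reading off $c_\sigma$ as the normalization $\hat{g}_\sigma(1)=1$ is also the intended argument. One remark: carrying the Gamma bookkeeping through your identity $1 + \frac{3(1/2 - s/(2\sigma))}{1/2 + 3s/(2\sigma)} = \frac{2}{1/2+3s/(2\sigma)}$ and then applying $\Gamma(\frac{2s}{\sigma}) = \frac{\sigma}{2s}\Gamma(\frac{2s}{\sigma}+1)$ yields a prefactor $\frac{c_\sigma^s}{2s}$ in part (5), not $\frac{c_\sigma^s}{2\sigma}$ as printed in the lemma; the $2s$ version is the one consistent with the stated special value $\hat{f}_\sigma(1-\sigma) = \frac{c_\sigma^{-\sigma}}{2(1-\sigma)}$ once the closed form of $c_\sigma$ from part (4) is substituted, so you should not expect your computation to reproduce the printed $2\sigma$.
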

\begin{proof}
 The various integrals may be computed by substituting $\frac{dx}{x} =
-\left[\frac{4f}{1-f^2} + \frac{1}{f}\right]df.$
\end{proof}
\begin{lemma}\label{mellin_distort}
 Uniformly in $\frac{1}{2} < \sigma <1$ there is a constant $c>0$
such that 
\begin{align*}&\left|\frac{\hat{f}_\sigma(1-\sigma +
it)}{\hat{f}_\sigma(1-\sigma)}\right| \leq \left\{ \begin{array}{cll} \left(1 +
\frac{ct^2}{(1-\sigma)^2}\right)^{\frac{-1}{2}}&& |t|<
\frac{1}{4}\\
 1-c && |t| \geq \frac{1}{4}.\end{array}\right..
\end{align*}
\end{lemma}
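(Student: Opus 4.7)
The plan is to read off the ratio directly from the explicit Mellin formula in Lemma \ref{f_facts}(5) and then estimate it by means of the classical Weierstrass product identity
\[
\left|\frac{\Gamma(a+iy)}{\Gamma(a)}\right|^2 = \prod_{n\ge 0}\frac{1}{1+y^2/(a+n)^2}, \qquad a>0,\ y\in\bR.
\]
Substituting $s = 1-\sigma + it$ into the Mellin formula, the prefactor $c_\sigma^{it}$ has unit modulus, so the squared ratio reduces to a product of three Gamma-function ratios $|\Gamma(a_j+iy_j)/\Gamma(a_j)|^{\pm 2}$ with
\[
(a_1,y_1) = \bigl(\tfrac{2\sigma-1}{2\sigma}, -\tfrac{t}{2\sigma}\bigr),\quad (a_2,y_2) = \bigl(\tfrac{2-\sigma}{\sigma}, \tfrac{2t}{\sigma}\bigr),\quad (a_3,y_3) = \bigl(\tfrac{3}{2\sigma}, \tfrac{3t}{2\sigma}\bigr),
\]
where the first two appear with positive exponent and the third with negative exponent. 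Applying the product identity to each, the whole expression collapses to a single convergent product over $n \ge 0$.

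For the first range $|t|<1/4$, I would isolate the $n=0$ contribution of the first Gamma factor, which by itself yields the factor $(1+t^2/(2\sigma-1)^2)^{-1}$. Since $a_1$ is the smallest of the three real parts, and the only one that can be close to $0$, this is the dominant contribution. A Taylor expansion of the remaining infinite product to second order in $t$, organised in terms of the trigamma values $\psi'(a_j)=\sum_{n\ge 0}(a_j+n)^{-2}$, shows that the remaining contribution is of the form $1+O(t^2)$ with a coefficient controlled uniformly. Putting these together and rephrasing in terms of $1-\sigma$ on the relevant subrange of $(\tfrac12,1)$ gives the claimed bound $(1+ct^2/(1-\sigma)^2)^{-1/2}$.

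For the second range $|t|\ge 1/4$, the same $n=0$ term from the first Gamma already gives a factor at most $(1+1/(16(2\sigma-1)^2))^{-1}$, which is bounded strictly below $1$ whenever $\sigma$ is bounded away from $1$; the remaining Gamma factors are easily controlled, since their real parts are bounded and bounded away from $0$. The main obstacle will be uniformity across the full range $\sigma\in(\tfrac12,1)$: the natural quadratic gain produced by the first Gamma scales as $(2\sigma-1)^{-2}$, so to match the stated $(1-\sigma)^{-2}$ one must pool the second-order trigamma contributions from all three Gamma factors and verify that their combination yields a strictly positive quadratic form with the correct scaling, rather than relying on the first factor alone.
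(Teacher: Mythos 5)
Your strategy---read off the three Gamma ratios from the explicit Mellin formula of Lemma~\ref{f_facts}(5) and estimate them via the Weierstrass product $|\Gamma(a+iy)/\Gamma(a)|^2=\prod_{n\geq0}(1+y^2/(a+n)^2)^{-1}$---is the right one, and is what ``Taylor expanding $\hat f_\sigma$'' amounts to in practice. But the obstacle you flag at the end is real, and the resolution you sketch (pooling the trigamma contributions of all three Gamma factors) cannot succeed with the parameters you wrote down: with $(a_1,a_2,a_3)=\bigl(\tfrac{2\sigma-1}{2\sigma},\tfrac{2-\sigma}{\sigma},\tfrac{3}{2\sigma}\bigr)$, every $a_j$ stays bounded away from $0$ as $\sigma\to1$, so the pooled quadratic coefficient $\tfrac{1}{4\sigma^2}\psi'(a_1)+\tfrac{4}{\sigma^2}\psi'(a_2)-\tfrac{9}{4\sigma^2}\psi'(a_3)$ tends to the finite number $\tfrac14\psi'(\tfrac12)+4\psi'(1)-\tfrac94\psi'(\tfrac32)=9-\pi^2/3$, and no rearrangement of these terms can reproduce a $(1-\sigma)^{-2}$ blow-up.

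The source of the trouble is that the printed formula in Lemma~\ref{f_facts}(5) contains a slip. Carrying out the substitution $\tfrac{dx}{x}=-\tfrac1\sigma\bigl[\tfrac{4f}{1-f^2}+\tfrac1f\bigr]\,df$ and then $u=f^2$ expresses $\hat f_\sigma$ as a sum of two Beta integrals, and combining them gives
\[
\hat f_\sigma(s)=\frac{c_\sigma^s}{\sigma}\cdot\frac{\Gamma\bigl(\tfrac12-\tfrac{s}{2\sigma}\bigr)\,\Gamma\bigl(\tfrac{2s}{\sigma}\bigr)}{\Gamma\bigl(\tfrac32+\tfrac{3s}{2\sigma}\bigr)},
\]
that is, $\Gamma(2s/\sigma)$ rather than $\Gamma(2s/\sigma+1)$, and prefactor $1/\sigma$ rather than $1/(2\sigma)$. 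Only this version is consistent with the value $\hat f_\sigma(1-\sigma)=c_\sigma^{-\sigma}/(2(1-\sigma))$ stated on the following line and with the definition of $\kappa(\sigma)$ in Theorem~\ref{small_prime_theorem}; the formula as printed would give $c_\sigma^{-\sigma}/(2\sigma)$ instead. With the correction, your second parameter pair becomes $(a_2,y_2)=\bigl(\tfrac{2(1-\sigma)}{\sigma},\tfrac{2t}{\sigma}\bigr)$, so $a_2\to 0$ as $\sigma\to1$ and the single $n=0$ Weierstrass factor from the second Gamma is $(1+y_2^2/a_2^2)^{-1}=(1+t^2/(1-\sigma)^2)^{-1}$: this supplies the required $(1-\sigma)^{-2}$ scaling on its own, with no pooling at all. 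Your argument then runs as planned: the $n=0$ factors of the first two Gammas give $(1+t^2/(2\sigma-1)^2)^{-1}(1+t^2/(1-\sigma)^2)^{-1}$, the higher-$n$ factors together with the third Gamma (whose argument $a_3\ge\tfrac32$ is bounded away from $0$) contribute a correction that is $\exp(O(t^2))$ uniformly in $\sigma$, and since $(1-\sigma)^{-2}\ge 4$ and $(2\sigma-1)^{-2}\ge 1$ this correction is absorbed for $|t|<\tfrac14$; for $|t|\ge\tfrac14$ the same $n=0$ factors already produce a fixed loss, and Stirling gives exponential decay as $|t|\to\infty$.
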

\begin{proof}
 This follows on Taylor expanding $\hat{f}(s)$ about $s = 1-\sigma$.
\end{proof}

\subsection{Bound of tail for sum of squares, Proof of part
A of Theorem \ref{small_prime_theorem}}\label{tail_bound_1}
We apply Lemma \ref{tail_bound_lemma} with $y_i = y_q = \frac{x}{N}$. Recall
that we set $\alpha = \frac{\log y}{\log\log^2 y}$ and that we assume $\sigma >
\frac{1}{2} + \frac{1}{\log_2 q}$. 

For $\epsilon >
\frac{C}{ \log \log q}$, $C$ fixed but sufficiently large, we have uniformly in
$\frac{1}{2} < \sigma < 1$,
\begin{align*} \sum_p \log p \frac{r(p)^2}{1-r(p)^2} &\leq \sum_n \Lambda(n)
\frac{f_\sigma(\frac{n}{M})^2}{1-f_\sigma(\frac{n}{M})^2} \\&=
\frac{1}{2\pi i}\int_{(\frac{1}{2} + \sigma)}\frac{-\zeta'}{\zeta}(s)
M^s\hat{g}(s) ds
\\&\leq M(1 + O(\exp(-\sqrt{\log M}))) \leq \log \frac{x}{N} - \frac{\log
\frac{x}{N}}{\log \log q}\end{align*} by shifting the contour into the
standard zero-free region for zeta and passing the pole at 1 of
$-\frac{\zeta'}{\zeta}$.  Thus the first condition of the
lemma is satisfied.  Meanwhile,
\begin{align*} &\sum_{k \log p > \frac{\log \frac{x}{N}}{(\log \log q)^4}}
r(p)^{2k}p^{2k \alpha} \\&\ll \sum_{p <
\exp(\log^3 M)} \frac{(1 -(1-f_\sigma(\frac{p}{M})^2))^{\frac{\log
q}{\log_2^{8}q}}}{1 - f_\sigma(\frac{p}{M})^2} + \sum_{p > \exp(\log^3 M)}
\frac{f_\sigma(\frac{p}{M})^2
p^\alpha}{1-f_\sigma(\frac{p}{M})^2p^\alpha}.\end{align*} Substituting the
lower bound [(3) of Lemma \ref{f_facts}] for $1-f_\sigma$ in the first sum
and the upper bound [(2) of Lemma \ref{f_facts}] for $f_\sigma$ in the second
sum proves that each sum is $o(1)$, uniformly in $\sigma > \frac{1}{2} +
\frac{1}{\log_2 q}$.  This verifies the second condition of
Lemma \ref{tail_bound_lemma}.   It follows that \[\sum_{n \leq
\frac{x}{N}}r_\sigma(n)^2 = (1 + o(1)) \prod_p (1 - r_\sigma(p)^2)^{-1}, \qquad
q \to \infty,\] uniformly for $\sigma \in [\frac{1}{2} + \frac{1}{\log_2 q}, 1]
$. 

\subsection{Saddle point asymptotics, Proof of
part B of Theorem \ref{small_prime_theorem}}
We introduce the generating Dirichlet series
 \[R_\sigma(s) = \prod_p \left(1 -
\frac{r_\sigma(p)}{p^s}\right)^{-1},\; \Re(s) \geq 1-\sigma\]  Define its
logarithm
and logarithmic derivatives by
\[\phi_{0,\sigma}(s) = \log R_\sigma(s), \qquad \qquad \phi_{j,\sigma}(s) =
(-1)^j
\frac{d^j}{ds^j}\phi_0(s), \;\; j>0.\]
Explicitly, \[ \phi_{j,\sigma}(s) =  \sum_p \log^j p \sum_{n=1}^\infty
n^{j-1}\left(\frac{r_\sigma(p)}{p^s}\right)^n, \;\; (j \geq 0).\]
We prove the following proposition, which asymptotically evaluates $\sum_{n
\leq N} r_\sigma(n)$.
\begin{prp}\label{sum_evaluation_1}
 Let $\frac{1}{2} < \sigma < 1$ solve $\log N =
\phi_{1,\sigma}(\sigma)$.  Then  
\[\sum_{n \leq N}r_\sigma(n) \sim
\frac{N^{\sigma}e^{\phi_{0,\sigma}(\sigma)}}{\sigma
\sqrt{2\pi \phi_{2,\sigma}(\sigma)}}.\]
\end{prp}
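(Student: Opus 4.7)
\emph{Proof proposal.} The plan is a saddle-point evaluation of a Perron integral, directly parallel to Hildebrand--Tenenbaum's treatment of $\Psi(x,y)$ in Theorem \ref{smooth_saddle}. First I apply Lemma \ref{truncation} on the vertical line $\Re(s)=\sigma$, where $R_\sigma(s)=\prod_p(1-r_\sigma(p)/p^s)^{-1}$ converges absolutely thanks to the decay $r_\sigma(p)\ll (M/p)^\sigma$ provided by (2) of Lemma \ref{f_facts}. With a polynomial truncation height $T$, the error term in Lemma \ref{truncation} together with the trivial bound $|R_\sigma(\sigma+it)|\leq R_\sigma(\sigma)$ is negligible compared with the main term to be produced, and the problem reduces to the integral
\begin{equation*}
\frac{1}{2\pi}\int_{-T}^{T} R_\sigma(\sigma+it)\,N^{\sigma+it}\,\frac{dt}{\sigma+it}.
\end{equation*}

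Next I would Taylor expand at the real saddle $s=\sigma$. The defining relation $\log N=\phi_{1,\sigma}(\sigma)$ kills the linear term, giving
\begin{equation*}
\log\bigl(R_\sigma(\sigma+it)N^{\sigma+it}\bigr) = \phi_{0,\sigma}(\sigma)+\sigma\log N-\tfrac{t^2}{2}\phi_{2,\sigma}(\sigma)+O\bigl(|t|^3\phi_{3,\sigma}(\sigma)\bigr).
\end{equation*}
A cutoff $t_0$ with $t_0^2\phi_{2,\sigma}(\sigma)\to\infty$ and $t_0^3\phi_{3,\sigma}(\sigma)\to 0$ is available because a routine partial-summation computation yields $\phi_{j,\sigma}(\sigma)\asymp(\log M)^{j-1}M^{1-\sigma}\hat f_\sigma(1-\sigma)$, so $\phi_{3,\sigma}(\sigma)/\phi_{2,\sigma}(\sigma)^{3/2}\to 0$ throughout our range of $\sigma$. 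On $|t|\leq t_0$ I would replace $1/(\sigma+it)$ by $1/\sigma$ (the remainder controlled by the smallness of $t_0$) and carry out the Gaussian integral
\begin{equation*}
\int_{-\infty}^{\infty} e^{-\frac{t^2}{2}\phi_{2,\sigma}(\sigma)}\,dt=\sqrt{\frac{2\pi}{\phi_{2,\sigma}(\sigma)}},
\end{equation*}
producing precisely the claimed main term $N^\sigma e^{\phi_{0,\sigma}(\sigma)}/(\sigma\sqrt{2\pi\phi_{2,\sigma}(\sigma)})$.

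The main obstacle will be the off-peak range $t_0<|t|\leq T$, where one needs quantitative decay of $|R_\sigma(\sigma+it)|/R_\sigma(\sigma)$. My plan is to derive, via partial summation and the prime number theorem (the contribution of prime powers $n\geq 2$ being $O(1)$ for $\Re(s)>\tfrac12$), the Mellin-type representation
\begin{equation*}
\phi_{0,\sigma}(s)=\bigl(1+o(1)\bigr)\frac{M^{1-s}}{\log M}\hat f_\sigma(1-s)+O(1).
\end{equation*}
Combined with the reality relation $\overline{\hat f_\sigma(\bar s)}=\hat f_\sigma(s)$ and Lemma \ref{mellin_distort} applied at $1-s=1-\sigma-it$, this yields
\begin{equation*}
\Re\bigl[\phi_{0,\sigma}(\sigma)-\phi_{0,\sigma}(\sigma+it)\bigr]\gg\frac{M^{1-\sigma}\hat f_\sigma(1-\sigma)}{\log M}\cdot\min\!\left(\frac{t^2}{(1-\sigma)^2},\,1\right),
\end{equation*}
which smoothly extends the Gaussian bound across $t_0\leq|t|\leq 1/4$ and provides genuine exponential decay on $1/4\leq|t|\leq T$ (the prefactor $M^{1-\sigma}/\log M$ being the large parameter). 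Matching this against the peak height $R_\sigma(\sigma)/\sqrt{\phi_{2,\sigma}(\sigma)}$ of the integrand shows the tail contribution is subdominant, completing the saddle-point evaluation.
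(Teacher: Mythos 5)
Your outline follows the same saddle-point strategy as the paper --- Lemma \ref{truncation} to truncate the Perron integral, Taylor expansion at the real saddle, a Gaussian approximation on a short interval around $t=0$ whose width is fixed using the relative sizes $\phi_{j,\sigma}(\sigma)\asymp(\log M)^{j-1}M^{1-\sigma}\hat f_\sigma(1-\sigma)$, and a decay estimate driven by Lemma \ref{mellin_distort} off the peak. So the architecture is right; the gap is in the step you flag as ``the main obstacle,'' and it is a real gap.

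Your proposed intermediate representation $\phi_{0,\sigma}(s)=(1+o(1))\frac{M^{1-s}}{\log M}\hat f_\sigma(1-s)+O(1)$ does not deliver the lower bound you assert. Taking real parts of the difference,
\[
\Re\bigl[\phi_{0}(\sigma)-\phi_{0}(\sigma+it)\bigr]
=\frac{M^{1-\sigma}}{\log M}\Bigl[(1+\epsilon_1)\hat f_\sigma(1-\sigma)-\Re\bigl((1+\epsilon_2)M^{-it}\hat f_\sigma(1-\sigma-it)\bigr)\Bigr]+O(1),
\]
where $\epsilon_1=o(1)$ is real and $\epsilon_2=o(1)$ is complex, and Lemma \ref{mellin_distort} only provides the gain $|\hat f_\sigma(1-\sigma-it)|\le \hat f_\sigma(1-\sigma)\bigl(1+ct^2/(1-\sigma)^2\bigr)^{-1/2}$. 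The bracket is then $\gtrsim\epsilon_1-|\epsilon_2|+c\,t^2/(1-\sigma)^2+\cdots$, and near the crossover $|t|\sim t_0\asymp\phi_{2,\sigma}(\sigma)^{-1/2}$ the quantity $t^2/(1-\sigma)^2$ is itself only of size $\approx 1/\bigl((1-\sigma)^2\log^2 M\bigr)$, which in much of the range of $\sigma$ (in particular $\sigma$ near $1/2$) is $o(1)$. An unquantified multiplicative $o(1)$ error can therefore overwhelm the gain, and the claimed inequality $\Re[\phi_0(\sigma)-\phi_0(\sigma+it)]\gg \frac{M^{1-\sigma}\hat f_\sigma(1-\sigma)}{\log M}\min(t^2/(1-\sigma)^2,1)$ is not secured. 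The paper's Lemma \ref{phi_0_bound} sidesteps this: it starts from the identity $\Re[\tilde\phi_{0,\sigma}(\sigma)-\tilde\phi_{0,\sigma}(\sigma+it)]=\sum_n\frac{\Lambda(n)f_\sigma(n/M)}{n^\sigma\log n}(1-\cos(t\log n))$, exploits that every term is nonnegative to truncate at a parameter $Y$ (so that $\frac{1}{\log n}\geq\frac{1}{\log Y}$), and then recognises the completed sum as $\tilde\phi_{1,\sigma}(\sigma)-\tilde\phi_{1,\sigma}(\sigma+it)$ minus an explicit tail; since $\tilde\phi_{1,\sigma}$ (unlike $\tilde\phi_{0,\sigma}$) has a clean Mellin formula with $\hat f_\sigma$ and no extraneous $\log$-weight, Lemma \ref{mellin_distort} can then be applied with only additive, quantitatively small errors. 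The detour through $\tilde\phi_1$ and the positivity device are what make the estimate airtight; you should incorporate them rather than rely on a multiplicative $(1+o(1))$ for $\phi_0$ itself.
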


In order to establish this proposition, we first require some bounds on
$R_\sigma(s)$
away from the real axis.  These are very similar to the bounds established in
\cite{hildebrand_tenenbaum} for $\zeta(s, y)$.

 It will be convenient to work with the
 'non-multiplicative' approximations
\begin{equation}\label{non_mult_approx} \tilde{\phi}_{j,\sigma}(s) = \sum_{n =
1}^\infty
\frac{\Lambda(n)
f_\sigma(\frac{n}{M})\log^{j-1}n}{n^s}.\end{equation}
Our first lemma demonstrates that $\tilde{\phi}_j$ is in fact a strong
approximation to $\phi_j$.  

\begin{lemma}\label{approx_mult}
 Let $s = \sigma + it$. Uniformly in $\frac{1}{2} + \frac{1}{\log_2 q} \leq
\sigma \leq 1 - \frac{1}{\log_2 q}$ we have \[\left|\phi_{j,\sigma}(s)-
\tilde{\phi}_{j,\sigma}(s)\right|
\ll \frac{\log^j M}{M^{\min(\frac{\sigma}{2}, \sigma - \frac{1}{2})}}.\]
\end{lemma}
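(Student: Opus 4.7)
The plan is to write the difference as an explicit Dirichlet series, reduce to the case $s = \sigma$ by the triangle inequality, and bound the resulting positive sum term-by-term. Expanding both Dirichlet series as double sums over primes $p$ and exponents $k \geq 1$, the $k = 1$ contributions agree (both equal $\sum_p \log^j p \cdot f_\sigma(p/M)/p^s$); hence
\[
\phi_{j,\sigma}(s) - \tilde{\phi}_{j,\sigma}(s) = \sum_p \sum_{k \geq 2} k^{j-1} (\log p)^j \, \frac{f_\sigma(p/M)^k - f_\sigma(p^k/M)}{p^{ks}},
\]
and since $|p^{-ks}| = p^{-k\sigma}$, it suffices to bound the corresponding sum of absolute values at $s = \sigma$.

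I would handle the dominant case $k = 2$ by splitting the primes into three ranges according to size relative to $M$. For $p \leq \sqrt{M}$, both arguments $p/M$ and $p^2/M$ lie in the regime where the Taylor-type bound $1 - f_\sigma(x) \leq (x/c_\sigma)^{\sigma/2}$ from Lemma \ref{f_facts}(3) is nontrivial; writing $f_\sigma(p/M) = 1 - g$ with $g \leq (p/(Mc_\sigma))^{\sigma/2}$ and comparing $1 - f_\sigma(p/M)^2 = 2g - g^2$ with $1 - f_\sigma(p^2/M) \leq (p^2/(Mc_\sigma))^{\sigma/2}$ yields $|f_\sigma(p/M)^2 - f_\sigma(p^2/M)| \ll (p^2/M)^{\sigma/2}$. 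Summation then gives
\[
\sum_{p \leq \sqrt{M}} (\log p)^j \cdot \frac{(p^2/M)^{\sigma/2}}{p^{2\sigma}} = \frac{1}{M^{\sigma/2}}\sum_{p \leq \sqrt M} \frac{(\log p)^j}{p^\sigma} \ll \frac{(\log M)^{j-1} M^{(1-\sigma)/2}}{M^{\sigma/2}(1-\sigma)} \ll \frac{(\log M)^j}{M^{\sigma-1/2}},
\]
using the hypothesis $1/(1-\sigma) \leq \log_2 q \ll \log M$. For $\sqrt{M} < p \leq M$, I would bound the difference crudely by $2$ and extract decay $M^{-(\sigma-1/2)}$ from the tail of $\sum \log^j p/p^{2\sigma}$ starting at $\sqrt M$; for $p > M$, Lemma \ref{f_facts}(2) gives polynomial decay $f_\sigma(p/M)^2 \leq (c_\sigma M/p)^{2\sigma}$ and $f_\sigma(p^2/M) \leq (c_\sigma M/p^2)^\sigma$, yielding an even stronger bound.

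For $k \geq 3$ a parallel analysis applies: the Taylor expansion $1 - f_\sigma(p/M)^k \ll k(p/(Mc_\sigma))^{\sigma/2}$ handles small $p$, the polynomial bound $f_\sigma(x) \leq (c_\sigma/x)^\sigma$ handles large $p$, and the fact that $k\sigma \geq 3/2$ ensures rapid convergence of the remaining sums, so the total contribution is absorbed by the $k = 2$ bound. The main obstacle is maintaining uniformity in $\sigma$ near the endpoints of the permitted interval: the denominators $1-\sigma$ and $2\sigma - 1$ that arise from the various integrations must be absorbed into $(\log M)^j$, and this is exactly what the hypothesis $\sigma \in [1/2 + 1/\log_2 q,\, 1 - 1/\log_2 q]$ ensures, since both quantities are then $O(\log_2 q) = O(\log M)$.
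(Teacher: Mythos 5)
Your proposal follows essentially the same path as the paper: both expand the difference as a double sum over primes $p$ and exponents $k\geq 2$, reduce to $s=\sigma$ by the triangle inequality, split into $|1-f_\sigma(p/M)^k|$ and $|1-f_\sigma(p^k/M)|$, and control these by parts (2) and (3) of Lemma \ref{f_facts}. The paper leaves the final summation entirely implicit ("the claimed bound follows on substituting"), whereas you make the range-of-$p$ splitting and the $k=2$ versus $k\geq 3$ cases explicit; the only quibble is that the displayed per-prime bound $|f_\sigma(p/M)^2-f_\sigma(p^2/M)|\ll(p^2/M)^{\sigma/2}$ silently drops the factor $c_\sigma^{-\sigma/2}$, which can be as large as $(\log_2 q)^{1/4}$ when $\sigma$ is near $1/2$ --- but since $(1-\sigma)^{-1}$ is then $O(1)$, the combined factor stays $\ll\log M$, so the conclusion is unaffected (and this is in any case subsumed by your closing remark on uniformity).
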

\begin{proof}
We have
  \begin{align*}
  \left|\phi_{j,\sigma}(s)-
\tilde{\phi}_{j,\sigma}(s)\right|& = \left|\sum_p \log^j p \sum_{n = 2}^\infty
n^{j-1}
\frac{f_\sigma(\frac{p}{M})^n - f_\sigma(\frac{p^n}{M})}{p^{ns}}\right|\\
& \leq \sum_p \log^j p \sum_{n=2}^\infty n^{j-1} \frac{|1 -
f_\sigma(\frac{p}{M})^n| +
|1 - f_\sigma(\frac{p^n}{M})|}{p^{n\sigma}}\\
& \leq \sum_p \log^j p \left|1 - f_\sigma\left(\frac{p}{M}\right)\right|
\left(\sum_{n
= 2}^\infty \frac{n^j}{p^{n\sigma}}\right) + \sum_p \log^j p \sum_{n=2}^\infty
\frac{n^{j-1}}{p^{n\sigma}}\left|1 - f_\sigma\left(\frac{p^n}{M}\right)\right|,
 \end{align*}
and the claimed bound follows on substituting the upper bound in (3) of
Lemma \ref{f_facts}. 
\end{proof}
Our next lemma allows us to make explicit the relationship between $N$ and
$\sigma$ by evaluating $\phi_{1,\sigma}(\sigma)$.  
\begin{lemma}\label{phi_1_eval}
 We have uniformly in $\frac{1}{2}+
\frac{2}{\log_2 q}< \sigma < 1$, and $|t| <
\log_2 q$\[\phi_{1,\sigma}(\sigma+it) =
\hat{f}_\sigma(1-\sigma-it)M^{1-\sigma-it} -
\frac{\zeta'}{\zeta}(\sigma+it) + O(M^{1-\sigma}\exp(-\sqrt{\log M}))\]
In particular, for $\sigma$ solving $\phi_{1,\sigma}(\sigma) = \log N$,
the bounds  $\log_2^ q \log_3^{-10} q < \log N <\sqrt{\log
q} $ imply  $(1-\sigma)\gg \frac{\log_3 q}{\log_2 q}$ and $(\sigma -
\frac{1}{2}) \gg \frac{\log_3 q}{\log_2 q}$ as $q \to \infty$.
\end{lemma}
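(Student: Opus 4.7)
The plan is to prove the main asymptotic for $\phi_{1,\sigma}(s)$ by Mellin inversion against the Dirichlet series $-\zeta'/\zeta$, and then to deduce the range bounds on $\sigma$ by specialising to real $s=\sigma$ and analysing the resulting implicit equation.

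First I apply Lemma \ref{approx_mult} to replace $\phi_{1,\sigma}(s)$ by the non-multiplicative approximation $\tilde{\phi}_{1,\sigma}(s)$ of (\ref{non_mult_approx}); the discarded error is $\ll\log M/M^{\min(\sigma/2,\sigma-1/2)}$, which is within the claimed $M^{1-\sigma}\exp(-\sqrt{\log M})$ throughout the range $\sigma-1/2\gg 1/\log_2 q$. Applying Mellin inversion to $f_\sigma(n/M)$ and swapping summation with integration (absolutely convergent for $1-\sigma<c<\sigma$) then yields
\[\tilde{\phi}_{1,\sigma}(s)=\frac{1}{2\pi i}\int_{(c)}\hat{f}_\sigma(w)\,M^w\left(-\frac{\zeta'}{\zeta}(s+w)\right)dw.\]

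The key manipulation is to shift this contour leftward from $\Re(w)=c$ to $\Re(w)=-\eta$ with $\eta=1/\sqrt{\log M}$, along a bent contour that stays inside the standard de la Vall\'ee Poussin zero-free region of $\zeta(s+w)$ uniformly for $|t|<\log_2 q$. Two simple poles are crossed. At $w=1-s$ the pole of $-\zeta'/\zeta(s+w)$ has residue $1$, contributing $\hat{f}_\sigma(1-s)M^{1-s}$. At $w=0$ the analytic continuation of $\hat{f}_\sigma(w)$ has a simple pole of residue $\lim_{x\to 0^+}f_\sigma(x)=1$, which follows from the expansion $f_\sigma(x)=1+O(x^{\sigma/2})$ at the origin [property (3) of Lemma \ref{f_facts}] after writing $\hat{f}_\sigma(w)=\int_0^1[f_\sigma(x)-1]x^{w-1}dx+1/w+\int_1^\infty f_\sigma(x)x^{w-1}dx$; this pole contributes $-\zeta'/\zeta(s)$. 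The remaining integral on $\Re(w)=-\eta$ is bounded by $M^{-\eta}=\exp(-\sqrt{\log M})$ times $\int|\hat{f}_\sigma(-\eta+iu)|\cdot|(\zeta'/\zeta)(\sigma-\eta+i(t+u))|\,du$, which is $O(1)$ by Stirling on the gamma factors of $\hat{f}_\sigma$ (exponential decay in $|u|$) together with the polynomial bound $|\zeta'/\zeta|\ll\log^2(|t+u|+2)$ inside the zero-free region. The most delicate step is designing the bent contour so the zero-free region is respected uniformly in $\sigma$ and $t$, which parallels the treatment of $\zeta(s,y)$ away from the real axis in \cite{hildebrand_tenenbaum} Chapter III.5.

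For the second assertion I set $t=0$ and evaluate at real $s=\sigma$, using $\hat{f}_\sigma(1-\sigma)=c_\sigma^{-\sigma}/(2(1-\sigma))$ from Lemma \ref{f_facts}(5), to obtain the implicit equation
\[\log N=\frac{(\log q)^{1-\sigma}}{2(1-\sigma)c_\sigma^{\sigma}}-\frac{\zeta'}{\zeta}(\sigma)+o(1).\]
Expanding in $\tau=1-\sigma$ near $\sigma=1$, with $c_\sigma\to 1/2$ from Lemma \ref{f_facts}(4) and the Laurent expansion $-\zeta'/\zeta(\sigma)=-1/(1-\sigma)+O(1)$, the $1/\tau$ poles from the two main pieces cancel, leaving $\phi_{1,\sigma}(\sigma)=\log_2 q+O(1+\tau\log_2^2 q)$; matching this to $\log N>\log_2^2 q\log_3^{-10}q$ forces $\tau\gg\log_3 q/\log_2 q$. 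Symmetrically, for $\sigma\downarrow 1/2$ one has $c_\sigma\sim 2\sigma-1$ and hence $c_\sigma^\sigma\asymp\sqrt{2\sigma-1}$ in the denominator; a direct calculation with the near-$1/2$ expansion shows that the constraint $\log N<\sqrt{\log q}$ forces $\sigma-1/2\gg\log_3 q/\log_2 q$.
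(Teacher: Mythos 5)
Your approach is essentially the paper's: reduce to $\tilde{\phi}_{1,\sigma}$ via Lemma \ref{approx_mult}, write it as a Mellin integral against $-\zeta'/\zeta$, and shift contours using the zero-free region. The paper's own proof is a one-line sketch (``write the integral, shift contours, use the standard zero-free region''), so you are filling in details, and the two residue identifications are correct and nicely argued --- in particular the observation that $\hat{f}_\sigma(w)$ has a simple pole of residue $\lim_{x\to 0^+}f_\sigma(x)=1$ at $w=0$, deduced directly from the integral representation via the expansion $f_\sigma(x)=1+O(x^{\sigma/2})$, which is exactly the source of the $-\zeta'/\zeta(\sigma+it)$ term in the statement. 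Your analysis of the implicit equation at the two endpoints is also the right calculation: the cancellation of $1/\tau$ between $\hat{f}_\sigma(1-\sigma)\sim\tau^{-1}$ and $-\zeta'/\zeta(\sigma)\sim-\tau^{-1}$ as $\sigma\uparrow 1$, and the $c_\sigma^\sigma\asymp\sqrt{2\sigma-1}$ blow-up as $\sigma\downarrow\tfrac12$, give the two stated constraints.

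One part of your write-up is internally inconsistent and should be tightened. You announce a \emph{bent} contour that respects the zero-free region, but then compute the tail as if the contour were flat at $\Re(w)=-\eta$, estimating $M^{-\eta}\cdot\int|\hat{f}_\sigma|\,|\zeta'/\zeta|\,du=O(\exp(-\sqrt{\log M}))$ and invoking ``the polynomial bound $|\zeta'/\zeta|\ll\log^2$ inside the zero-free region.'' For $\sigma$ near $\tfrac12+2/\log_2 q$ and $|t+u|$ large (up to $\log_2 q$), the flat line $\Re(s+w)=\sigma-\eta$ lies well inside the critical strip, outside the zero-free region, so that polynomial bound does not apply there. A genuinely bent contour, kept at $\Re(s+w)>1-c/\log(|{\rm Im}(s+w)|+2)$, only reaches $\Re(w)=1-\sigma-c/\log T$ at height $T$; truncating via the Stirling decay of $\hat{f}_\sigma$ at $T\asymp\exp(\sqrt{\log M})$ then gives the saving $M^{-c/\log T}=\exp(-c\sqrt{\log M})$ relative to $M^{1-\sigma}$, which is precisely why the lemma's error term carries the extra factor $M^{1-\sigma}$. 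Your $O(\exp(-\sqrt{\log M}))$ without that factor overclaims the savings and comes from an invalid contour. (Near ${\rm Im}(s+w)=0$ one can dip past $w=0$ to catch that pole without meeting zeros; elsewhere one stays in the zero-free region, and the contributions of the extra poles of $-\zeta'/\zeta$ at $\zeta$-zeros need to be estimated or avoided. The paper's sketch glosses over the same point.)
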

\begin{proof}
 It suffices to prove the corresponding result for $\tilde{\phi}_1$ since
the previous lemma implies that the resulting error is contained
in the error term.  To prove this lemma, write
\[\tilde{\phi}_1(\sigma+it) = \frac{1}{2\pi i}
\int_{(\frac{1}{2})}\left(-\frac{\zeta'}{\zeta}(s+\sigma+it)\right)M^s
\hat{f}_\sigma(s) ds,\] shift contours, and use the standard zero-free region. 
\end{proof}

\begin{lemma}\label{rel_size_derivs}
Suppose that $\sigma$ varies with $q$ in such a way that $(1-\sigma)\log_2 q \to
\infty$, $(2\sigma -1)\log_2 q \to\infty$ as $q \to
\infty$.  Then  
\[\phi_{j,\sigma}(\sigma) \sim_j \phi_{0,\sigma}(\sigma)\log_2^j q  .\]
\end{lemma}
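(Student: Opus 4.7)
The plan is to reduce $\phi_{j,\sigma}(\sigma)$ to the non-multiplicative approximations $\tilde\phi_{j,\sigma}(\sigma)$ via Lemma \ref{approx_mult}, evaluate these by Mellin inversion extending the argument of Lemma \ref{phi_1_eval}, and then form the desired ratio with $\phi_{0,\sigma}(\sigma)$.

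First, Lemma \ref{approx_mult} gives $\phi_{j,\sigma}(\sigma) = \tilde\phi_{j,\sigma}(\sigma) + O((\log M)^j M^{-\min(\sigma/2,\sigma-1/2)})$, which is negligible compared with the main term of order $M^{1-\sigma}(\log M)^{j-1}$ we shall extract. For $j \geq 1$, I would represent
\[\tilde\phi_{j,\sigma}(\sigma) = \frac{(-1)^{j-1}}{2\pi i}\int_{(1/2)}\hat f_\sigma(s)\, M^s \left(-\frac{\zeta'}{\zeta}\right)^{(j-1)}(s+\sigma)\, ds,\]
and shift the contour past $s = 1-\sigma$ into the standard zero-free region of $\zeta$; the shifted integral is $O(M^{1-\sigma}\exp(-\sqrt{\log M})(\log M)^{j-1})$ by the same estimate used in Lemma \ref{phi_1_eval}. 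Since $(-\zeta'/\zeta)^{(j-1)}(s+\sigma)$ has principal part $(j-1)!/(s-(1-\sigma))^j$ at the pole, the residue equals $\frac{d^{j-1}}{ds^{j-1}}[\hat f_\sigma(s) M^s]|_{s=1-\sigma}$, which by Leibniz expands as
\[\sum_{k=0}^{j-1}\binom{j-1}{k}\hat f_\sigma^{(j-1-k)}(1-\sigma)(\log M)^k M^{1-\sigma}.\]
Using the explicit formula of Lemma \ref{f_facts}(5), the derivatives satisfy $\hat f_\sigma^{(\ell)}(1-\sigma) = O_\ell((1-\sigma)^{-\ell}\hat f_\sigma(1-\sigma))$, so together with the hypothesis $(1-\sigma)\log_2 q \to \infty$ the $k = j-1$ term dominates, yielding $\tilde\phi_{j,\sigma}(\sigma) \sim \hat f_\sigma(1-\sigma)\, M^{1-\sigma}(\log M)^{j-1}$.

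For $j = 0$, I would exploit the differential identity $\phi_{0,\sigma}'(s) = -\phi_{1,\sigma}(s)$ together with $\phi_{0,\sigma}(s) \to 0$ as $s \to \infty$ to write
\[\phi_{0,\sigma}(\sigma) = \int_\sigma^\infty \phi_{1,\sigma}(t)\, dt.\]
Substituting the asymptotic $\phi_{1,\sigma}(t) \sim \hat f_\sigma(1-t) M^{1-t}$ from Lemma \ref{phi_1_eval} and applying Laplace's method (the factor $M^{1-t}$ is concentrated on scale $1/\log M$ near $t = \sigma$, while $\hat f_\sigma(1-t)$ varies on the larger scale $1-\sigma \gg 1/\log M$ by hypothesis) yields $\phi_{0,\sigma}(\sigma) \sim \hat f_\sigma(1-\sigma)\, M^{1-\sigma}/\log M$. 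Forming the ratio and using $\log M = (1+o(1))\log_2 q$ (since $M = (1-\epsilon)\log q$) gives $\phi_{j,\sigma}(\sigma)/\phi_{0,\sigma}(\sigma) \sim (\log_2 q)^j$, which is the claim.

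The main obstacle is controlling the derivatives of $\hat f_\sigma$ and the smoothness of $\hat f_\sigma(1-t)$ near $t = \sigma$: the nearest singularities of $\hat f_\sigma$ are at $s = 0$ (from the Mellin transform near the origin) and at $s = \sigma$ (from the $\Gamma$-factor in Lemma \ref{f_facts}(5)), each at distance $\min(1-\sigma, 2\sigma - 1)$ from $s = 1-\sigma$. The two hypotheses $(1-\sigma)\log_2 q \to \infty$ and $(2\sigma - 1)\log_2 q \to \infty$ together ensure this distance exceeds $1/\log M$ by a divergent factor, which is exactly what is needed to make the subleading residue terms (for $j \geq 1$) and the Laplace error (for $j = 0$) of smaller order than the main term.
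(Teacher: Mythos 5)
Your argument is correct and reaches the same conclusion, but it is a genuinely different route from the paper's. The paper's proof is elementary: it compares $\tilde{\phi}_{j,\sigma}(\sigma)$ with $\tilde{\phi}_{0,\sigma}(\sigma)$ by introducing two cutoffs $\log Y = \log M + \sqrt{\log M / (2\sigma-1)}$ and $\log Z = \log M - \sqrt{\log M/(1-\sigma)}$, bounding $\tilde{\phi}_{j,\sigma}(\sigma) \leq \tilde{\phi}_{0,\sigma}(\sigma) \log^j Y - (\text{tail past }Y)$ from above and $\tilde{\phi}_{j,\sigma}(\sigma) \geq \log^j Z\,[\tilde{\phi}_{0,\sigma}(\sigma) - (\text{head up to }Z)]$ from below, and then estimating the discarded pieces via the pointwise bounds from Lemma~\ref{f_facts}. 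The two cutoffs are tuned so that the two hypotheses $(2\sigma-1)\log_2 q \to \infty$ and $(1-\sigma)\log_2 q \to \infty$ kill the tail and the head respectively. Your proof instead evaluates the Mellin integral by residues for $j \geq 1$ (extending the method already used in Lemma~\ref{phi_1_eval}), reads off the leading term $\hat f_\sigma(1-\sigma)M^{1-\sigma}(\log M)^{j-1}$ from the Leibniz expansion, and handles $j=0$ by integrating $\phi_{1,\sigma}$ with Laplace's method. This gives explicit leading constants rather than just two-sided bounds, at the cost of needing to control $\hat f_\sigma^{(\ell)}(1-\sigma)$ and the Laplace tail. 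Both proofs rest on the same two conditions in the same way: the distance from $1-\sigma$ to the nearest obstruction must exceed $1/\log M$ by a divergent factor.

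Two small points to clean up. First, your displayed bound $\hat f_\sigma^{(\ell)}(1-\sigma) = O_\ell((1-\sigma)^{-\ell}\hat f_\sigma(1-\sigma))$ should read $O_\ell(\min(1-\sigma,\,2\sigma-1)^{-\ell}\hat f_\sigma(1-\sigma))$, since the $\Gamma$-factor singularity at $s=\sigma$ is at distance $2\sigma-1$ and can be the closer one; you state this correctly in the last paragraph but the earlier display, as written, appears to invoke only the first hypothesis. Second, in the $j=0$ Laplace argument the asymptotic $\phi_{1,\sigma}(t)\sim\hat f_\sigma(1-t)M^{1-t}$ only makes sense for $t<1$ (where $1-t$ lies in the strip of convergence of $\hat f_\sigma$); you should note that the contribution from $t\geq 1$ is $O(1)$, which is negligible since the main term is of order $M^{1-\sigma}/((1-\sigma)\log M)\gg\log N/\log_2 q \to \infty$ in the stated range.
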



\begin{proof}
Fix $Y$ such that $\log Y = \log M + \sqrt{\frac{\log M}{2\sigma -1}}$.  Then
\[\tilde{\phi}_{j,\sigma}(\sigma) = \sum_n
\frac{\Lambda(n)f_\sigma(\frac{n}{M})\log^{j-1}(n)}{n^\sigma} \leq
\tilde{\phi}_{0,\sigma}(\sigma) \log^j Y - \sum_{n >
Y}\frac{\Lambda(n)f_\sigma(\frac{n}{M})\log^{j-1}(n)}{n^\sigma} .\]
Since $f_\sigma(\frac{n}{M}) \leq \left(\frac{Mc_\sigma}{n}\right)^\sigma$ and
$\log Y \geq \frac{1}{2\sigma -1}$, the negative term is bounded by
\[ (Mc_\sigma)^\sigma \frac{\log^{j-1}Y}{(2\sigma-1)Y^{2\sigma -1}} \ll
\log^jY \hat{f}_\sigma(1-\sigma)\frac{M^{1-\sigma}}{\log M}
\cdot\left(\frac{M}{Y}\right)^{2\sigma-1}\ll  \log^j Y
\tilde{\phi}_0(\sigma)\left(\frac{M}{Y}\right)^{2\sigma -1},\] 
and this is $o(\tilde{\phi}_{0,\sigma}(\sigma))$.  In particular,
$\tilde{\phi}_{0,\sigma}(\sigma) \gg \frac{1}{\log M} \tilde{\phi}_{1,
\sigma}(\sigma).$

Now choose $Z$ so that $\log Z = \log M - \sqrt{\frac{\log M}{1-\sigma}}$.  Then
\[\tilde{\phi}_{j,\sigma}(\sigma) \geq \log^j Z
\left[\tilde{\phi}_{0,\sigma}(\sigma) - \sum_{n <
Z}\frac{\Lambda(n) f_\sigma(\frac{n}{M})}{n^\sigma \log n}\right].\] Bounding
$f_\sigma <1$ and using $\log Z \geq \frac{1}{1-\sigma}$, we have
\begin{align*}\sum_{n <
Z}\frac{\Lambda(n) f_\sigma(\frac{n}{M})}{n^\sigma \log n} \ll
\frac{Z^{1-\sigma}}{(1-\sigma)\log Z} &\ll \frac{M^{1-\sigma}}{(1-\sigma)\log
M}\left(\frac{Z}{M}\right)^{1-\sigma} \\&\ll
\frac{\tilde{\phi}_{1,\sigma}(\sigma)}{\log
M}\left(\frac{Z}{M}\right)^{1-\sigma} \ll
\tilde{\phi}_{0,\sigma}(\sigma)\left(\frac{Z}{M}\right)^{1-\sigma} =
o(\tilde{\phi}_{0,\sigma}(\sigma)).\end{align*}

Thus $\tilde{\phi}_{j,\sigma}(\sigma) \sim_j
\tilde{\phi}_{0,\sigma}(\sigma) \log^j M$.  But then applying Lemmas
\ref{approx_mult}
and \ref{phi_1_eval}, $\phi_{j,\sigma}(\sigma) \sim \tilde{\phi}_{j,
\sigma}(\sigma)$ for each
$j$.
\end{proof}

\begin{lemma}\label{phi_0_bound}
Let $\phi_{1,\sigma}(\sigma) = \log N$.  Then uniformly for $N$ varying in
the range $\log_2^2 q \log_3^{-10}q < \log N < \sqrt{\log q}$ we have 
\[
 \Re\left[\tilde{\phi}_{0,\sigma}(\sigma) - \tilde{\phi}_{0,\sigma}(\sigma +
it)\right] \gg
\left\{\begin{array}{lll} t^2 \tilde{\phi}_{2,\sigma}(\sigma), && |t| < (\log
Y)^{-1} \\
\frac{\phi_{1,\sigma}(\sigma)\min\left(\frac{t^2}{(1-\sigma)^2}, 1\right)}{\log
M +
\frac{3 \log_2
M}{2\sigma -1}} , && (\log Y)^{-1}  < |t| < M\end{array}\right.
\]
where $\log Y = \log M +
\sqrt{\frac{\log M}{2\sigma -1}}$.
\end{lemma}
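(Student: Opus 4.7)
The basic identity I would start from is
\[
\Re\bigl[\tilde\phi_{0,\sigma}(\sigma)-\tilde\phi_{0,\sigma}(\sigma+it)\bigr]
=\sum_n\frac{\Lambda(n)f_\sigma(n/M)}{n^\sigma\log n}\bigl(1-\cos(t\log n)\bigr),
\]
which is a non-negative sum. The plan is then to lower bound this sum in the two regimes of $|t|$ by selecting different subsets of $n$ where the weight $1-\cos(t\log n)$ can be shown to contribute positively in a quantitatively controlled way.

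In the small range $|t|<(\log Y)^{-1}$, I would use the Taylor inequality $1-\cos\theta\ge\theta^2/4$ for $|\theta|\le 1$, which applies to every $n\le Y$ by the choice of $Y$. This immediately yields
\[
\Re[\cdots]\ \ge\ \frac{t^2}{4}\sum_{n\le Y}\frac{\Lambda(n)f_\sigma(n/M)\log n}{n^\sigma}.
\]
To convert this into the claimed $t^2\,\tilde\phi_{2,\sigma}(\sigma)$, I would reuse the truncation argument from the proof of Lemma \ref{rel_size_derivs}: its calculation with the cutoff at $\log Y=\log M+\sqrt{\log M/(2\sigma-1)}$ already shows that the tail $n>Y$ contributes $o(\tilde\phi_{2,\sigma}(\sigma))$, so the restricted sum is $(1+o(1))\tilde\phi_{2,\sigma}(\sigma)$.

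In the larger range $(\log Y)^{-1}<|t|<M$, my plan is to discard everything except a single dyadic range of primes $[P,Q]\subset[1,CM]$ chosen as a function of $t$. On this range (i) the bound $f_\sigma(p/M)\gg 1$ holds by item (3) of Lemma \ref{f_facts}, and (ii) the interval is wide enough that $t\log p$ traverses at least one full period, so by PNT a positive proportion of primes satisfy $1-\cos(t\log p)\ge c$. The resulting lower bound becomes, after applying Mertens' theorem and the estimate $\tilde\phi_{1,\sigma}(\sigma)\sim M^{1-\sigma}\hat f_\sigma(1-\sigma)$,
\[
\Re[\cdots]\ \gg\ \sum_{P\le p\le Q}\frac{f_\sigma(p/M)}{p^\sigma}\ \gg\ \frac{\tilde\phi_{1,\sigma}(\sigma)\cdot(\log Q-\log P)}{\log M\cdot L},
\]
where $L$ will turn out to be the quantity $\log M+3\log_2 M/(2\sigma-1)$ once the optimal $P,Q$ are inserted. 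The factor $\min(t^2/(1-\sigma)^2,1)$ reflects the two sub-cases: when $|t|\ge 1-\sigma$ the ``full'' dyadic range at $p\asymp M$ carries out and produces the factor $1$, while for $(\log Y)^{-1}<|t|<1-\sigma$ one can only afford a shorter window of primes where $t\log p$ has amplitude $\asymp |t|/(1-\sigma)$, which by the Taylor inequality contributes the extra $(t/(1-\sigma))^2$.

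The main obstacle, in my view, is the second regime: choosing the dyadic interval $[P,Q]$ uniformly in $t$ and $\sigma$ so that the PNT estimate for $\sum_p\cos(t\log p)/p^\sigma$ yields the right denominator $\log M+3\log_2 M/(2\sigma-1)$, and especially handling the transition at $|t|=1-\sigma$. The bound $|t|<M$ enters precisely at this step, as one needs $|t|$ small enough so that the PNT error term (coming from the standard zero-free region, as used in Lemmas \ref{approx_mult} and \ref{phi_1_eval}) remains negligible compared to the main term on the chosen prime interval.
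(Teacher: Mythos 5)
Your treatment of the small range $|t|<(\log Y)^{-1}$ is essentially the paper's: Taylor-expand $1-\cos$, restrict to $n\le Y$, and show the tail beyond $Y$ is $o(\tilde\phi_{2,\sigma}(\sigma))$ by the same truncation calculation that appears in Lemma \ref{rel_size_derivs}. That part is fine.

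The second range is where your route diverges from the paper and where I think the plan has real gaps. The paper does not single out primes; it pulls out $1/\log n \ge 1/\log Y$ for $n\le Y$ to pass from $\tilde\phi_{0}$ to the unweighted sum $\sum_n \Lambda(n)f_\sigma(n/M)n^{-\sigma}(1-n^{-it})=\tilde\phi_{1,\sigma}(\sigma)-\tilde\phi_{1,\sigma}(\sigma+it)$, evaluates this by the Mellin/zero-free-region identity of Lemma \ref{phi_1_eval} as
$\hat f_\sigma(1-\sigma)M^{1-\sigma}\bigl(1-\hat f_\sigma(1-\sigma-it)M^{-it}/\hat f_\sigma(1-\sigma)\bigr)+\cdots$,
and then invokes the concavity bound $|\hat f_\sigma(1-\sigma+it)/\hat f_\sigma(1-\sigma)|\le(1+ct^2/(1-\sigma)^2)^{-1/2}$ from Lemma \ref{mellin_distort}. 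Since $\Re[1-z]\ge 1-|z|$, this produces the factor $\min(t^2/(1-\sigma)^2,1)$ in one stroke, regardless of the phase $M^{-it}$. In your plan, that phase is a genuine obstruction: for $t$ with $t\log M$ near a multiple of $2\pi$, the term $1-\cos(t\log p)$ is small on the entire region where the weight $f_\sigma(p/M)p^{-\sigma}$ is concentrated, so a quantitative positive-proportion statement is exactly where the work lives, and you have not supplied it.

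Two concrete problems with the prime-counting version. First, a ``dyadic'' window cannot be right: for $(\log Y)^{-1}<|t|<1$ the quantity $t\log p$ does not traverse a full period over any interval $[P,2P]\subset[1,CM]$; the window must have length $\asymp\max(1/|t|,1/(1-\sigma))$ in the $\log p$-variable, which is typically far longer than dyadic and depends jointly on $t$ and $\sigma$. Second, the restriction ``$f_\sigma(p/M)\gg 1$'' forces $p\lesssim c_\sigma M$, and for $\sigma$ near $\tfrac12$ one has $c_\sigma\sim 2\sigma-1\to 0$; the portion of $\tilde\phi_{1,\sigma}(\sigma)$ carried by primes $p\le c_\sigma M$ is only a fraction $\asymp(2\sigma-1)/(1-\sigma)$ of the whole, since most of the mass sits in the slowly decaying tail $p\gg c_\sigma M$ where $f_\sigma(p/M)\asymp(c_\sigma M/p)^\sigma$. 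Your plan therefore loses a factor of size $(2\sigma-1)/(1-\sigma)$ that is not recovered by the denominator $\log M+3\log_2 M/(2\sigma-1)$, which stays comparable to $\log M$ throughout the permitted range of $\sigma$. The claim that ``$L$ will turn out to be'' the lemma's denominator is precisely the nontrivial content and is not established; the Mellin-transform bound of Lemma \ref{mellin_distort} is what encapsulates this uniformly, and an elementary substitute would require carrying the full weight $f_\sigma(p/M)p^{-\sigma}$ across a variable-length window and controlling the resulting oscillatory sum by the prime number theorem with explicit phase tracking, which is not sketched here.
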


\begin{proof}
 For all $t$ we have
\begin{equation}\label{phi_0_diff} \Re\left[\tilde{\phi}_{0,\sigma}(\sigma) -
\tilde{\phi}_{0,\sigma}(\sigma + it)\right] = \sum_n \frac{\Lambda(n)
f_\sigma(\frac{n}{M})}{n^\sigma \log n}(1 - \cos(t\log n)).\end{equation} For
$|t| <
(\log Y)^{-1}$ this is
\[ \gg t^2 \tilde{\phi}_{2,\sigma}(\sigma) - O(t^2 \sum_{n >
Y}\frac{\Lambda(n)f_\sigma(\frac{n}{M}) \log n}{n^\sigma}) \gg t^2
\tilde{\phi}_2(\sigma)\left(1 -
O\left(\left(\frac{M}{Y}\right)^{2\sigma -1}\right)\right),\] by bounding the
tail as in the previous lemma.  This is
$\gg t^2 \tilde{\phi}_2(\sigma)$ since $(2\sigma - 1)\log M \to \infty$.

For $(\log Y)^{-1} < |t| < M$ we have
\begin{align*}(\ref{phi_0_diff}) \geq& \frac{1}{\log Y}\Re\left\{\sum_{n}
\frac{\Lambda(n) f_\sigma(\frac{n}{M})}{n^\sigma}(1 - n^{-it}) - \sum_{n
> Y}\frac{\Lambda(n)f_\sigma(\frac{n}{M})}{n^\sigma}\right\}
\\ &\geq \frac{1}{\log Y} \Re\Biggl\{\hat{f}_\sigma(1-\sigma)M^{1-\sigma}\left(1
- \frac{\hat{f}_\sigma(1-\sigma -it)M^{-it}}{\hat{f}_\sigma(1-\sigma)}\right)
+\frac{\zeta'}{\zeta}(\sigma + it) - \frac{\zeta'}{\zeta}(\sigma)\\&
\qquad\qquad+ O\left(\phi_1(\sigma) \left(\frac{M}{Y}\right)^{2\sigma
-1}\right) + O\left(M^{1-\sigma} \exp(-\sqrt{\log M})\right)\Biggr\}.
\end{align*}
  Since 
$\hat{f}_\sigma(1-\sigma) \gg (1-\sigma)^{-1}$ we obtain
\begin{align*}\gg& 
\frac{\phi_1(\sigma) \min\left(\frac{t^2}{(1-\sigma)^2}, 1\right)}{\log
Z},\end{align*} by applying Lemma \ref{mellin_distort}.
\end{proof}

\begin{proof}[Proof of Proposition \ref{sum_evaluation_1}]
 Choose $T = \log N \log^2 M$, $\delta = 1-\sigma$ and apply Lemma
\ref{truncation} and the
second bound of Lemma \ref{phi_0_bound} to deduce that
\begin{align*}\sum_{n \leq N}r(n) = N^\sigma e^{\phi_{0,\sigma}(\sigma)}
&\Biggl\{\frac{1}{2\pi } \int_{-(1-\sigma)}^{1-\sigma} \exp(it \log N +
\phi_{0,\sigma}(\sigma + it) - \phi_{0,\sigma}(\sigma)) \frac{dt}{\sigma + it}\\
& \qquad\qquad +
O(\frac{1}{\log M \sqrt{\log N}}) + O\left(e^{ \frac{-c\log N}{\log M + \frac{3
\log_2 M}{2\sigma -1}}}\log \log N\right)\Biggr\}\end{align*}
Since $\phi_{2,\sigma}(\sigma) \sim \log N \log M$, these are genuine error
terms. 
Now $|\phi_{3,\sigma}(\sigma+it)| \leq \phi_{3,\sigma}(\sigma) \sim \log^2 M
\log N$ holds for
all $t$.  Splitting the integral accordingly at $|t| =
\log^{-2/3}M\log^{-1/3}N$,
and $|t| = \frac{1}{\log Y}$ with $Y$ as in Lemma \ref{phi_0_bound} we obtain
the main term by Taylor expanding $\phi_0$ on the interval around 0,
and error terms in the remaining part
of the integral.
\end{proof}
\subsection{Comparison to smooth number asymptotics, proof of part C of Theorem
\ref{small_prime_theorem}}
In this section we complete the proof of Theorem \ref{small_prime_theorem} in
the case $\log_2^2q  \log_3^{-10}q < \log N < \sqrt{\log q}$ by comparing
$\sum_{n \leq N} r_\sigma(n)$ to $\Psi(N, \kappa(\sigma) M)$. via the
following proposition.

\begin{prp}\label{comparison_prop} Let $\kappa(\sigma)$ be defined
by $\kappa^{1-\sigma} =
(1-\sigma)\hat{f}_\sigma(1-\sigma)$.  
Let, as before, $\log N = \phi_{1,\sigma}(\sigma)$.  We have
\[\left|\log\left(\frac{\sum_{n \leq N} r_\sigma(n)}{\Psi(N, \kappa(\sigma)
M)}\right)\right|
= O\left(\frac{\log N}{(2\sigma - 1)\log^2 M}\right).\]
\end{prp}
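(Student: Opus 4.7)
The plan is to apply both available saddle-point asymptotics and then compare. I would invoke Theorem~\ref{smooth_saddle} with $x = N$ and $y = \kappa(\sigma) M$, obtaining a saddle point $\alpha = \alpha(N, \kappa M)$ determined by $\psi_1(\alpha, \kappa M) = \log N$; combined with Proposition~\ref{sum_evaluation_1}, the logarithm of the ratio of the two main terms is
\begin{equation*}
(\sigma - \alpha)\log N + \bigl(\phi_{0,\sigma}(\sigma) - \psi_0(\alpha, \kappa M)\bigr) + \tfrac{1}{2}\log\tfrac{\psi_2(\alpha, \kappa M)}{\phi_{2,\sigma}(\sigma)} + O(1).
\end{equation*}
The variance ratio is $O(1)$: both $\psi_2(\alpha, \kappa M)$ and $\phi_{2,\sigma}(\sigma)$ are of size $\log N \cdot \log M$, the former by standard smooth-number theory and the latter by Lemma~\ref{rel_size_derivs}.

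I would next verify that $\alpha$ is extremely close to $\sigma$. The definition $\kappa^{1-\sigma} = (1-\sigma)\hat{f}_\sigma(1-\sigma)$ is engineered precisely so that the PNT evaluation $\psi_1(\sigma, \kappa M) \sim (\kappa M)^{1-\sigma}/(1-\sigma) = \hat{f}_\sigma(1-\sigma) M^{1-\sigma}$ matches $\phi_{1,\sigma}(\sigma)$ from Lemma~\ref{phi_1_eval} to within $O(M^{1-\sigma}\exp(-\sqrt{\log M}))$ coming from the classical zero-free region. Since $\psi_2(\sigma, \kappa M) \asymp \log N \log M$, Taylor inversion gives $\alpha - \sigma = O(\exp(-\sqrt{\log M})/\log M)$, so $(\sigma-\alpha)\log N = o(1)$, and a second-order Taylor expansion shows $\psi_0(\alpha, \kappa M) - \psi_0(\sigma, \kappa M) = O(1)$. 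This reduces everything to bounding $\phi_{0,\sigma}(\sigma) - \psi_0(\sigma, \kappa M)$.

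The $n \geq 2$ prime-power terms on each side are separately $O(1/(2\sigma - 1))$ from absolute convergence of $\sum_p p^{-2\sigma}$, leaving the $n = 1$ contribution
\begin{equation*}
\sum_p \frac{f_\sigma(p/M) - \mathbf{1}_{p \leq \kappa M}}{p^\sigma}.
\end{equation*}
I would replace this sum by the corresponding integral via partial summation and PNT, yielding
\begin{equation*}
M^{1-\sigma}\int_0^\infty \frac{f_\sigma(u) - \mathbf{1}_{u \leq \kappa}}{u^\sigma \log(uM)}\,du + O\bigl(M^{1-\sigma}e^{-\sqrt{\log M}}\bigr).
\end{equation*}
The key cancellation is $\int_0^\infty (f_\sigma(u) - \mathbf{1}_{u \leq \kappa})u^{-\sigma}\,du = \hat{f}_\sigma(1-\sigma) - \kappa^{1-\sigma}/(1-\sigma) = 0$ by the defining identity for $\kappa$. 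Expanding $1/\log(uM) = 1/\log M - \log u/\log^2 M + O((\log u)^2/\log^3 M)$ (splitting the integral at $u = M^{\pm 1/10}$ to isolate where $|\log u| \ll \log M$) makes the leading $1/\log M$ contribution vanish, leaving a bound $O(M^{1-\sigma}/\log^2 M)$.

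Finally, $\log N \asymp c_\sigma^{-\sigma} M^{1-\sigma}/(1-\sigma)$ from Lemma~\ref{f_facts}(5), together with the elementary inequality $(1-\sigma)(2\sigma-1) \leq 1$ and $c_\sigma \leq 1$, converts $M^{1-\sigma}/\log^2 M$ into $O(\log N/((2\sigma-1)\log^2 M))$, matching the claim (and absorbing the $O(1/(2\sigma-1))$ from the $n \geq 2$ terms, since $1/(2\sigma-1) \ll \log N/((2\sigma-1)\log^2 M)$ in the given range $\log_2^2 q \log_3^{-10}q < \log N$). The main obstacle will be verifying that the leading cancellation in the $1/\log(uM)$ expansion remains uniform down to $\sigma$ near the endpoints $1/2$ and $1$ of the range: the decay bound $f_\sigma(u) \leq (c_\sigma/u)^\sigma$ from Lemma~\ref{f_facts}(2) supplies integrability at infinity, and the lower bound on $1 - f_\sigma$ from (3) controls behavior near the origin, but one must track the $c_\sigma$-dependencies through each tail estimate.
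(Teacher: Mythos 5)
Your proposal follows essentially the same route as the paper: both compare the saddle-point asymptotic of Proposition~\ref{sum_evaluation_1} with Theorem~\ref{smooth_saddle} for $\Psi(N,\kappa M)$, both exploit the cancellation $\hat f_\sigma(1-\sigma) = \kappa^{1-\sigma}/(1-\sigma)$ (the paper realizes it as a cancelled residue in a Perron integral, you as a vanishing Mellin integral — the same fact), and both reduce to estimating $\phi_{0,\sigma}(\sigma) - \psi_0(\sigma,\kappa M)$ by expanding $1/\log$ around $1/\log M$ (the paper's Lemma~\ref{phi_i_comparison}(2) factors out $1/\log M$ and bounds $|\log^{-1}n - \log^{-1}M|$; you Taylor-expand $1/\log(uM)$, which is the same computation). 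The paper's only organizational difference is to introduce an intermediate $\tilde N$ with $\log\tilde N = \psi_1(\sigma,\kappa M)$ and split the ratio into two pieces, which is equivalent to your direct comparison of $\alpha$ with $\sigma$.

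One bookkeeping slip worth flagging: $(\sigma - \alpha)\log N$ is \emph{not} $o(1)$, nor is $\psi_0(\alpha,\kappa M) - \psi_0(\sigma,\kappa M)$ necessarily $O(1)$, throughout the stated range. From $|\sigma - \alpha| \ll (1-\sigma)\exp(-\sqrt{\log M})/\log M$ and $\psi_1 \asymp \log N$, both quantities are $O(\log N\exp(-\sqrt{\log M}))$; since $\log M \sim \log_2 q$ and $\log N$ can be as large as $\sqrt{\log q} = \exp(\frac12\log_2 q)$, and $\frac12\log_2 q > \sqrt{\log_2 q}$, this factor tends to infinity. It is nonetheless harmless because $\exp(-\sqrt{\log M}) \ll 1/((2\sigma-1)\log M)$, so these terms are dominated by the claimed error $O(\log N/((2\sigma-1)\log^2 M))$; you just cannot dismiss them as $o(1)$. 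Your concern about uniformity in $\sigma$ near the endpoints is well placed and is exactly what the paper's bounds $|\log c_\sigma|, |\log\kappa| \ll 1/(2\sigma-1)$ in Lemma~\ref{phi_i_comparison}(2) are for.
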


By Lemma \ref{log_Psi_change}, so long as $|\theta| <1$ and
$|\theta| \log_3 q\to \infty$ as $q \to \infty$, 
\[\log \frac{\Psi(N, e^\theta \kappa(\sigma)M)}{\Psi(N,\kappa(\sigma) M)} \sim
\frac{\theta}{\log M} \frac{\log N}{\log M}\log \frac{\log N}{\log M}.\]
Therefore, since
\[(2\sigma -1) \log\left(\frac{\log N}{\log M}\right) \to \infty\]
Proposition \ref{comparison_prop} establishes that there is some $\kappa' =
(1 +
o(1))\kappa$ for which $\Delta(N,q) \geq \Psi(N, \kappa' M) = \Psi(N, (1 +
o(1))\kappa \log q)$, which
completes the proof of Theorem \ref{small_prime_theorem} for $\Delta(N,q)$. 
Therefore, it suffices to prove Proposition \ref{comparison_prop}.

Recall that we defined $\psi_j(s;y) = \sum_{p < y} \log^{j-1}p \sum_{n =
1}^\infty \frac{1}{n^{1-j}p^{ns}}$.  The following
essential lemma allows us to
establish the approximation $\phi_{j,\sigma}(\sigma) \approx \psi_j(\sigma,
\kappa M)$,
$j = 1, 2$.

\begin{lemma}\label{phi_i_comparison}
 We have, uniformly in $\frac{1}{2}+ \frac{2}{\log M}< \sigma < 1$,
\begin{enumerate}
\item \[\phi_{1,\sigma}(\sigma) - \psi_1(\sigma, \kappa M) =
O(M^{1-\sigma}\exp(-\sqrt{\log M})).\]  
\item \[\phi_{0,\sigma}(\sigma) - \psi_0(\sigma, \kappa M) =
O\left(\frac{\phi_{1,\sigma}(\sigma)}{(2\sigma-1)\log^2 M}\right)\]
\end{enumerate}
\end{lemma}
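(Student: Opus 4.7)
The plan is to expand the difference termwise over primes and prime powers, handle the $k=1$ contribution by Abel summation and the prime number theorem (where the definition of $\kappa$ cancels the leading term), and control the $k\ge 2$ contribution by pointwise bounds and cancellation. Throughout I first use Lemma~\ref{approx_mult} to replace $\phi_{j,\sigma}(\sigma)$ with the non-multiplicative approximation $\tilde\phi_{j,\sigma}(\sigma)$ from \eqref{non_mult_approx}, incurring an error that is a negative power of $M$ and hence negligible compared to what is claimed.

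For part~(1), write
\[
\phi_{1,\sigma}(\sigma)-\psi_1(\sigma,\kappa M)=\sum_{p,\,k\ge1}\frac{\log p\,[f_\sigma(p/M)^k-\mathbf{1}(p<\kappa M)]}{p^{k\sigma}}.
\]
For the $k=1$ piece, Abel summation with the strong PNT $\theta(x)=x+O(x\exp(-c\sqrt{\log x}))$ gives $\sum_p \log p\cdot h(p)/p^\sigma=\int_2^\infty h(x)/x^\sigma\,dx+O(M^{1-\sigma}e^{-c\sqrt{\log M}})$; substituting $x=My$ with $h(x)=f_\sigma(x/M)-\mathbf{1}(x<\kappa M)$ produces $M^{1-\sigma}\int_0^\infty(f_\sigma(y)-\mathbf{1}(y<\kappa))\,y^{-\sigma}\,dy=M^{1-\sigma}\bigl[\hat f_\sigma(1-\sigma)-\kappa^{1-\sigma}/(1-\sigma)\bigr]=0$ by the defining relation $\kappa^{1-\sigma}=(1-\sigma)\hat f_\sigma(1-\sigma)$. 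For the $k\ge 2$ piece, I exploit cancellation via the pointwise estimates of Lemma~\ref{f_facts}: on $p<\kappa M$ we have $|f_\sigma(p/M)^k-1|\le k(p/c_\sigma M)^{\sigma/2}$, and on $p\ge \kappa M$ we have $f_\sigma(p/M)^k\le(c_\sigma M/p)^{k\sigma}$, giving a contribution bounded by a small power of $1/(2\sigma-1)$, which is absorbed into the stated bound throughout the permitted range of~$\sigma$.

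For part~(2), expand similarly
\[
\phi_{0,\sigma}(\sigma)-\psi_0(\sigma,\kappa M)=\sum_{p,\,k\ge1}\frac{f_\sigma(p/M)^k-\mathbf{1}(p<\kappa M)}{k\,p^{k\sigma}}.
\]
The $k\ge 2$ contribution is $O(1/(2\sigma-1))$, well within the claimed bound. For $k=1$, the same Abel-summation step produces $\int_2^\infty h(x)/(x^\sigma\log x)\,dx$; substituting $x=My$ yields
\[
\frac{M^{1-\sigma}}{\log M}\int_0^\infty\frac{f_\sigma(y)-\mathbf{1}(y<\kappa)}{y^\sigma}\cdot\frac{dy}{1+\log y/\log M}.
\]
At leading order in $1/\log M$ the integral vanishes, once again by the definition of $\kappa$. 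Expanding the denominator geometrically to next order produces
\[
-\frac{M^{1-\sigma}}{\log^2 M}\int_0^\infty\frac{(f_\sigma(y)-\mathbf{1}(y<\kappa))\log y}{y^\sigma}\,dy,
\]
and using the bounds $f_\sigma(y)\le(c_\sigma/y)^\sigma$ and $|1-f_\sigma(y)|\le(y/c_\sigma)^{\sigma/2}$ from Lemma~\ref{f_facts}(2)(3) the integral is $O(\hat f_\sigma(1-\sigma)/(2\sigma-1))$; combined with $\phi_{1,\sigma}(\sigma)\asymp\hat f_\sigma(1-\sigma)M^{1-\sigma}$ this yields the bound $O(\phi_{1,\sigma}(\sigma)/((2\sigma-1)\log^2 M))$.

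The principal obstacle is careful accounting of all the subleading error sources uniformly in $\sigma$ as $\sigma\downarrow\tfrac12$, where $c_\sigma\sim 2\sigma-1$, $\hat f_\sigma(1-\sigma)$, and $\kappa$ each diverge. In particular one must verify that the PNT error, the $k\ge 2$ higher-prime-power correction, the approximation error $\phi_j\approx\tilde\phi_j$, and the integral truncation at the lower limit $x\ge 2$ are all dominated by the stated bound throughout the permitted range $1-\sigma,\sigma-\tfrac12\gg\log_3 q/\log_2 q$ --- a bookkeeping challenge rather than a conceptual one, given the machinery already developed.
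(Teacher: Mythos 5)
Your proposal is correct and is largely a reparametrization of the paper's argument: you use Abel summation against the PNT in the explicit form $\theta(x)=x+O(xe^{-c\sqrt{\log x}})$ where the paper passes through a Mellin/Perron contour shifted into the classical zero-free region. The substance is identical---the defining relation $\kappa^{1-\sigma}=(1-\sigma)\hat f_\sigma(1-\sigma)$ kills the leading term, and Lemma~\ref{f_facts}(2)(3) controls what remains. Two small things worth flagging: for part~(1) the paper works with the difference $\hat f_\sigma(s)-\kappa^s/s$ inside the Mellin integral, which makes the cancellation of \emph{both} poles (at $s=1-\sigma$ and at $s=0$) manifest in one stroke, whereas in the Abel-summation version the $s=0$ cancellation shows up as the integrability of $(f_\sigma(y)-\mathbf 1(y<\kappa))/y^\sigma$ near $y=0$, which you quietly absorb into the ``truncation at $x\ge 2$'' bookkeeping; this is fine but should be said. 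For part~(2) the paper extracts $1/\log M$ and then observes the bracketed remainder is \emph{exactly} $(\log M)^{-1}[\tilde\phi_{1,\sigma}(\sigma)-\tilde\psi_1(\sigma,\kappa M)]$, so that part~(1) is reused verbatim, while the residual is estimated via $|\log^{-1}n-\log^{-1}M|$; your geometric expansion of $1/\log(My)$ about $1/\log M$ is the same device in integral clothing, and your bound $O(\hat f_\sigma(1-\sigma)/(2\sigma-1))$ for the first-order integral matches (note $|\log c_\sigma|,|\log\kappa|\ll(2\sigma-1)^{-1}$, which justifies the comparison). You should also verify that the geometric series is dominated by its first term throughout the permitted range, which follows from $(2\sigma-1)\log M>4$; that is exactly why the hypothesis $\sigma>\tfrac12+\tfrac2{\log M}$ appears in the lemma.
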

\begin{proof}
In analogy with (\ref{non_mult_approx}) we introduce
\[\tilde{\psi}_j(s, \kappa M) = \sum_{n \leq \kappa M} \frac{\Lambda(n)
\log^{j-1}n} {n^{s}}.\]  In the range $\frac{1}{2} + \frac{1}{\log M} \leq
\sigma \leq 1- \frac{1}{\log M}$, the uniform bound
\[\left|\psi_j(\sigma, \kappa M) - \tilde{\psi}_j(\sigma, \kappa M) \right|
\ll  M^{\frac{1}{2} - \sigma}\log^j M\] is straightforward to establish along
the lines of Lemma \ref{approx_mult}.  Thus it suffices to prove the
corresponding statements of this lemma  for $\tilde{\phi}_{i,\sigma}$ and
$\tilde{\psi}_i$.  

To prove the first statement,  write
\[\tilde{\phi}_{1,\sigma}(\sigma) - \tilde{\psi}_1(\sigma, \kappa M) = 
\frac{1}{2\pi i}
\int_{(\frac{1}{2})}\left(-\frac{\zeta'}{\zeta}(s+\sigma)\right)M^s
\left[\hat{f}_\sigma(s)-\frac{\kappa^s}{s}\right] ds\] and note that the two
poles, at $s = 1-\sigma$ and at $s = 0$, are nullified by the difference. Shift
contours.

For the second, we have
\begin{align*}\tilde{\phi}_{0,\sigma}(\sigma) - \tilde{\psi}_0(\sigma, \kappa M)
&=
\sum_{n \leq \kappa M} \frac{\Lambda(n)(f_\sigma(\frac{n}{M})-1)}{n^\sigma \log
n} + \sum_{n \geq \kappa M} \frac{\Lambda(n)f_\sigma(\frac{n}{M})}{n^\sigma \log
n}\\
&= \frac{1}{\log M}\left[\sum_{n \leq \kappa M}
\frac{\Lambda(n)(f_\sigma(\frac{n}{M})-1)}{n^\sigma} + \sum_{n > \kappa M}
\frac{\Lambda(n)f_\sigma(\frac{n}{M})}{n^\sigma}\right] \\& \quad +
O\left(\sum_{n \leq \kappa M} \frac{\Lambda(n)
|1-f_\sigma(\frac{n}{M})|}{n^\sigma} \left|\log^{-1}n - \log^{-1}M\right|\right)
\\& \quad+ O\left(\sum_{n \geq \kappa M} \frac{\Lambda(n)
f_\sigma(\frac{n}{M})}{n^\sigma}\left|\log^{-1}n - \log^{-1}M\right|\right)
\end{align*}
The bracketed term is $\frac{1}{\log M}[\tilde{\phi}_{1,\sigma}(\sigma) -
\tilde{\psi}_1(\sigma, \kappa M)] = O(M^{1-\sigma}\exp(-\sqrt{\log M}))$, which
is permissible since $\phi_{1,\sigma}(\sigma) \sim \psi_1(\sigma, \kappa M) \sim
\frac{(\kappa M)^{1-\sigma}-1}{1-\sigma}$.  In the first error term, we  use $|1
- f_\sigma(\frac{n}{M})| \ll \left(\frac{n}{c_\sigma
M}\right)^{\frac{\sigma}{2}}$ for $n < c_\sigma M$, and bound trivially
$|1-f_\sigma(\frac{n}{M})|<1$ for $n > c_\sigma M$ to obtain
\[ \ll \frac{(c_\sigma M)^{1-\sigma}}{1-\frac{\sigma}{2}} \frac{|\log c_\sigma|
+ 1}{\log^2 M} + \frac{(\kappa M)^{1-\sigma} }{1-\sigma} \frac{|\log \kappa| +
1}{\log^2 M}.\]
In the second error term we bound $f_\sigma(\frac{n}{M}) \ll
\left(\frac{c_\sigma M}{n}\right)^\sigma$ to obtain
\[\ll \frac{(\kappa M)^{1-\sigma}}{2\sigma -1}
\left(\frac{c_\sigma}{\kappa}\right)^\sigma\left[\frac{|\log \kappa| +
1}{\log^2 M} + \frac{1}{(2\sigma -1) \log^2 M}\right].\]
Since $\frac{1}{2\sigma -1} \left(\frac{c_\sigma}{\kappa}\right)^\sigma$ is
bounded as $\sigma \downarrow \frac{1}{2}$ and $|\log c_\sigma|, |\log \kappa|
\ll \frac{1}{2\sigma -1}$, these errors are also permissible.
\end{proof}

We also need the following analogy of Lemma \ref{rel_size_derivs} for the
$\psi_j(\sigma, \kappa M)$.

\begin{lemma} Assume that as $q \to \infty$, $\sigma$ satisfies $\frac{1}{2}<
\sigma < 1$ and $(\sigma - \frac{1}{2}) \log M \to \infty$, $(1-\sigma)\log M
\to \infty$.  Then
\[ \psi_j(\sigma, \kappa M) \sim_j \log^j M \psi_0(\sigma, \kappa M).\]
\end{lemma}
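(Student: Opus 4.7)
The plan is to mimic the proof of Lemma \ref{rel_size_derivs}, with the sharp cut-off at $\kappa M$ playing the role that $f_\sigma(p/M)$ played there. Both sides of the desired relation are concentrated on primes $p$ with $\log p$ close to $\log M$, so the gain of $\log^j M$ in passing from $\psi_0$ to $\psi_j$ comes from trimming the tail of small primes. First I would pass to the truncated von Mangoldt approximation
\[ \tilde\psi_j(\sigma,\kappa M) = \sum_{n \leq \kappa M}\frac{\Lambda(n)\log^{j-1}n}{n^\sigma}, \]
and invoke the uniform bound $|\psi_j(\sigma,\kappa M) - \tilde\psi_j(\sigma,\kappa M)| \ll M^{\frac{1}{2}-\sigma}\log^j M$ already used in the proof of Lemma \ref{phi_i_comparison}. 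Combined with the prime number theorem estimate $\tilde\psi_0(\sigma,\kappa M) \gg (\kappa M)^{1-\sigma}/((1-\sigma)\log M)$, the hypothesis $(2\sigma-1)\log M \to \infty$ makes this error negligible compared with $\log^j M\,\tilde\psi_0$, so it suffices to prove the analogous relation for $\tilde\psi_j$. Along the way I would verify that $|\log\kappa| = O\!\bigl(|\log(2\sigma-1)|/(1-\sigma)\bigr) = o(\log M)$ using Lemma \ref{f_facts}(4) together with the hypotheses, so that $\log(\kappa M) = (1+o(1))\log M$.

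For the upper bound, bounding $\log^{j-1}n \leq \log^{j-1}(\kappa M)$ pointwise for $n \leq \kappa M$ and regrouping gives
\[ \tilde\psi_j(\sigma,\kappa M) = \sum_{n \leq \kappa M}\frac{\Lambda(n)\log^j n}{n^\sigma \log n} \leq \log^j(\kappa M)\,\tilde\psi_0(\sigma,\kappa M) = (1+o(1))\log^j M\,\tilde\psi_0(\sigma,\kappa M). \]
For the lower bound, I would choose $Z$ with $\log Z = \log M - \sqrt{\log M/(1-\sigma)}$, so that the hypothesis $(1-\sigma)\log M \to \infty$ yields $\log Z = (1+o(1))\log M$. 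Discarding $n \leq Z$ and then using $\log^{j-1}n \geq \log^{j-1}Z$ on the remaining range yields
\[ \tilde\psi_j(\sigma,\kappa M) \geq \log^j Z\bigl[\tilde\psi_0(\sigma,\kappa M) - \tilde\psi_0(\sigma,Z)\bigr]. \]
By partial summation from the prime number theorem,
\[ \frac{\tilde\psi_0(\sigma,Z)}{\tilde\psi_0(\sigma,\kappa M)} \ll \Bigl(\tfrac{Z}{\kappa M}\Bigr)^{1-\sigma}\frac{\log(\kappa M)}{\log Z} = \exp\!\Bigl(-\sqrt{(1-\sigma)\log M}\,(1+o(1))\Bigr) = o(1), \]
where I absorbed the contribution of $\log\kappa$ using the step above.

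Combining the two bounds and applying the reduction in the first paragraph gives $\psi_j(\sigma,\kappa M) \sim_j \log^j M\,\psi_0(\sigma,\kappa M)$. The only genuinely delicate point is controlling $\log\kappa$: this is the parameter that distinguishes the present lemma from its counterpart for $\tilde\phi_j$, and it is what forces the assumptions $(2\sigma-1)\log M \to \infty$ and $(1-\sigma)\log M \to \infty$. Once one checks (from Lemma \ref{f_facts}(4)) that $|\log\kappa|$ is dwarfed by $(1-\sigma)\log M$ under these hypotheses, the rest of the argument is a routine repetition of Lemma \ref{rel_size_derivs}.
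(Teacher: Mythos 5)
Your argument is correct, but it is considerably more elaborate than what the paper does.  The paper's proof is essentially a one-liner: because the cutoff at $\kappa M$ is sharp, one can write down the asymptotic
\[\tilde{\psi}_j(\sigma,\kappa M)=\sum_{n\le\kappa M}\frac{\Lambda(n)\log^{j-1}n}{n^\sigma}\sim\frac{(\kappa M)^{1-\sigma}\log^{j-1}M}{1-\sigma}\]
directly by the prime number theorem and partial summation (including the case $j=0$, where the integral $\int^{\kappa M}t^{-\sigma}(\log t)^{-1}\,dt\sim(\kappa M)^{1-\sigma}/((1-\sigma)\log(\kappa M))$ uses $(1-\sigma)\log M\to\infty$), and the claimed ratio is immediate.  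Your proof instead mimics the sandwich argument of Lemma \ref{rel_size_derivs}, where that machinery is genuinely needed because $f_\sigma(p/M)$ is not a sharp cutoff and a clean asymptotic for $\tilde{\phi}_{0,\sigma}(\sigma)$ is not available; here it is more work than necessary, though it lands in the right place.  The one point where your write-up adds something beyond the paper's terse proof is the explicit check that $|\log\kappa|=o(\log M)$ (via $(1-\sigma)\log\kappa=-\log 2-\sigma\log c_\sigma$ and Lemma \ref{f_facts}(4), combined with the two hypotheses and the fact that $1-\sigma$ and $\sigma-\tfrac{1}{2}$ cannot both be small simultaneously); this is indeed used implicitly in the paper to replace $\log(\kappa M)$ by $\log M$, and you are right that it is the reason the two hypotheses are stated.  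One small imprecision: when you claim $(Z/\kappa M)^{1-\sigma}=\exp(-(1+o(1))\sqrt{(1-\sigma)\log M})$, you need $(1-\sigma)\log\kappa=o(\sqrt{(1-\sigma)\log M})$, not merely $\log\kappa=o(\log M)$; this sharper statement does hold (the quantity $(1-\sigma)\log\kappa$ is bounded when $1-\sigma\to 0$ and is $\ll\log_2 M$ when $\sigma\to\tfrac12$, both of which are $o(\sqrt{(1-\sigma)\log M})$), but it deserves a line of justification rather than being folded into the earlier estimate.
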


\begin{proof}
 Observe \[\tilde{\psi}_j(\sigma, \kappa M) = \sum_{n \leq \kappa M}
\frac{\Lambda(n)\log^{j-1}n}{n^\sigma} \sim \frac{(\kappa
M)^{1-\sigma}\log^{j-1}M}{1-\sigma},\] so that $\tilde{\psi}_j(\sigma, \kappa
M) \sim_j \log^j M \tilde{\psi}_0(\sigma, \kappa M)$.  The statement follows,
since $\psi_j(\sigma, \kappa M) \sim \tilde{\psi}_j(\sigma, \kappa M)$ for all
$j$.
\end{proof}

\begin{proof}[Proof of Proposition \ref{comparison_prop}]
 Define $\tilde{N}$ by $\log \tilde{N} = \psi_1(\sigma, \kappa M)$.  Then
\[\left|\log\left(\frac{\sum_{n \leq N}r_\sigma(n)}{\Psi(N, \kappa
M)}\right)\right|
\leq \left|\log \left(\frac{\sum_{n \leq N}r(n)}{\Psi(\tilde{N}, \kappa
M)}\right)
\right| + \left|\log \left(\frac{\Psi(\tilde{N}, \kappa M)}{\Psi(N, \kappa
M)}\right)\right| = I + II.\]  

We first consider $I$.  By Proposition \ref{sum_evaluation_1} applied to
$\sum_{n \leq N} r_\sigma(n)$
and  and Theorem
\ref{smooth_saddle} applied to $\Psi(\tilde{N}, \kappa M)$ we have
\[I = \left|\sigma(\phi_{1,\sigma}(\sigma)- \psi_1(\sigma, \kappa M)) +
\phi_{0,\sigma}(\sigma)
- \psi_0(\sigma, \kappa M) - \frac{1}{2}\left[ \log \phi_{2,\sigma}(\sigma) -
\log
\psi_2(\sigma, \kappa M)\right]\right| + o(1).\] Since 
$\phi_{2,\sigma}(\sigma)\sim
\psi_2(\sigma, \kappa M) \sim \log N \log M$, substituting the bounds of
Lemma \ref{phi_i_comparison} gives
\[I = O\left(\frac{\log N
\log\left(\frac{\log N}{\log M}\right)}{(2\sigma -1)\log^2 M}\right).\]

For $II$, let $\alpha$ solve $\psi_1(\alpha, \kappa M) = \log N$ so that, by
Theorem \ref{smooth_saddle} applied to both $\Psi(\tilde{N}, \kappa M)$ and
$\Psi(N,
\kappa M)$,
\[II = \sigma \log \tilde{N} - \alpha \log N + \psi_0(\sigma, \kappa M) -
\psi_0(\alpha, \kappa M) - \frac{1}{2} \left[ \log \psi_2(\sigma, \kappa M) -
\log \psi_2(\alpha, \kappa M)\right] + o(1).\] Now by the Mean Value Theorem
\[|\log \tilde{N} - \log N| = \left|\psi_1(\sigma, \kappa M) - \psi_1(\alpha,
\kappa
M)\right| = |\sigma - \alpha|\cdot \psi_2(\gamma, \kappa M),\]\[|\psi_0(\sigma,
\kappa M) - \psi_0(\alpha, \kappa M)| = |\sigma - \alpha| \cdot\psi_1(\gamma',
\kappa M),\] for some $\gamma, \gamma'$ between $\alpha$ and $\sigma$. Moreover,
$\log N \sim \log \tilde{N}$ so \[\psi_2(\gamma, \kappa M) \sim \psi_2(\alpha,
\kappa
M) \sim \psi_2(\sigma, \kappa M) \sim \log N \log M\] and therefore 
\[|\sigma - \alpha| \ll \frac{|\phi_1(\sigma) - \psi_1(\sigma, \kappa M)|}{\log
N \log M} \ll \exp(-\sqrt{\log M}).\]
Combining these estimates, we find
\[II \ll \sigma |\log \tilde{N} - \log N| + |\sigma - \alpha| \log N + o(1) =
O(\frac{\log N}{\exp(-\sqrt{\log M})}),\] completing the proof.
\end{proof}

\subsection{Dual case $\Delta(\frac{q}{N}, q)$}
In this section we take $x = \frac{1}{\log q} \sqrt{\frac{q}{N}}$,  $M =
(\frac{1}{2}-\epsilon) \log q$ and define completely
multiplicative function $r_\sigma(n)$ by 
\[r_\sigma(p) = f_\sigma(\frac{p}{M}).\] 

A calculation that is exactly
analogous to the one in Section \ref{tail_bound_1} proves that for $\epsilon$
tending sufficiently slowly to 0,
\[\sum_{n \leq \frac{2x}{N}} r_\sigma(n)^2 = (1 + o(1)) \prod_p
(1-r_\sigma(p)^2)^{-1},\]  so that, by the second part of Proposition
\ref{fund_dual},
\[\Delta(\frac{q}{N}, q) \gg \frac{\sqrt{q}}{N^2} \sum_{n \leq \frac{N}{2}}
n r_\sigma(n).\]  

The evaluation of $\sum_{n \leq \frac{N}{2}} n r_\sigma(n)$
by a Perron integral yields
\[\frac{N}{2} \int_{\sigma - i \infty}^{\sigma + i \infty}
\left(\frac{N}{2}\right)^s R_\sigma(s) \frac{ds}{s+1};\] the only difference
between this integral and the one evaluated in Proposition
\ref{sum_evaluation_1} is the factor of $2^{-s}$ and the fact that $\frac{1}{s}$
has been replaced by $\frac{1}{s+1}$ in the denominator.  Exactly the same
method as there yields, for $\sigma$ solving $\phi_{1,\sigma}(\sigma) = \log N -
\log
2$, 
\[\sum_{n \leq \frac{N}{2}} n r_\sigma(n) \sim \frac{(N/2)^{1 +
\sigma}e^{\phi_{0,\sigma}(\sigma)}}{(1 + \sigma) \sqrt{2\pi
\phi_{2,\sigma}(\sigma)}}.\] It then
follows that for $\kappa'(\sigma)$ solving ${\kappa'}^{1-\sigma} =
(1-\sigma)\hat{f}(1-\sigma)$,
\[\Delta(\frac{q}{N},q) \gg \frac{\sqrt{q}}{N} \Psi(N, (1 + o(1))
\kappa'(\sigma) M).\]  Since $M = (1-o(1))\frac{\log q}{2}$, 
therefore
\[\Delta(\frac{q}{N},q) \gg \frac{\sqrt{q}}{N} \Psi\left(N, \left(\frac{1}{2} +
o(1)\right)\kappa(\sigma)\log q\right).\]



\section{Short sums to composite moduli, Proof of Theorem \ref{small_composite_theorem}}
Throughout let $M$ be minimal such that $\sum_{p\leq M}\log p > \log q$, and
define $u = \frac{\log N}{\log M}$.  Also, in this section $x =
\frac{\phi(q)}{N}$.  When $q$ is the product of many small
distinct primes the behavior of $\Delta(N,q)$ changes near where
$\log N = \log_2^3 q$.  Informally this is explained by the fact that for $N
< \exp(\log_2^3 q)$, most numbers less than $N$ but having all prime factors
larger than $\log q$ are composed of the same number of primes\footnote{This
number could be thought of as $\lfloor u \rfloor$, but
this is not quite accurate.}; for larger $N$ this is
no longer the case.

\subsection{Case $\log N = o(\log_2^2 q \log_3q)$}
Let $P>M$, $\log P \sim \log M$ be a parameter to be chosen, and let $\epsilon =
\frac{C}{\log_2 q}$ for a fixed sufficiently large constant $C$. Set $L =
\frac{\sqrt{MP}}{\log P} \frac{1}{(1 +
\epsilon)\sqrt{2}}.$ We define completely multiplicative $r(n)$ by
\[ r(p) = \left\{\begin{array}{lll} \frac{L \log p}{p}, && P < p < P^2\\ 0 &&
\text{otherwise}\end{array}\right.\]

\begin{rem}Intuitively we can understand the parameter $P$ as follows.  In order
to maximize $\sum_{n \leq N} r(n)$ we would like to have $r(n)$ be non-zero for
as my values $n$ as possible, but we would also like its value to be as large as
possible.  By increasing the starting point $P$ of the resonator we decrease
the number of $n$ for which $r(n)$ is non-zero, but for the remaining $n$ we
increase the value of $r(n)$.  $P$ will ultimately be chosen as a compromise
between these two competing factors. \end{rem}
\subsubsection{Bound for tail of squares, Proof of part A of Theorem
\ref{small_composite_theorem}}
We are going to apply Lemma \ref{tail_bound_lemma} to prove $\sum_{n \leq
\frac{x}{N}} r(n)^2 = (1 + o(1)) \sum_n r(n)^2$.  Thus $y = \frac{x}{N}$ and
$\alpha = \frac{\log_2^2 y}{\log y}$.

We have
\begin{align*}\sum_{P < p < P^2} \log p
\frac{r(p)^2}{1-r(p)^2} &\leq \frac{L^2}{1 - \frac{L^2 \log^2 P}{P^2}} \sum_{P <
p < P^2} \frac{\log^3 p}{p^2}\\&\leq (1 + O(\log^{-1} P))P \frac{ L^2 \log^2
P}{P^2 - L^2 \log^2 P}\\&\leq  (1 + O(\log^{-1} M) - \epsilon +
O(\epsilon^2))M \end{align*}
For $\epsilon = \frac{C}{\log_2 q}$ with $C$ sufficiently large, this
is $\leq \log \frac{x}{N} (1 - \log_2^{-1} \frac{x}{N})$, so that $r(n)$
satisfies the first condition of the Lemma.  Meanwhile, since $r(p) \leq
\frac{1}{\sqrt{2}}$, there is some fixed $c > 0$ such that
\[\sum_{P < p < P^2} \sum_{k \log p > \frac{\log y}{\log_2^4 y}}
r(p)^{2k}p^{k\alpha} \ll P^2 e^{-c \frac{\log y}{\log_2^5 y}} = o(1).\] 
Thus $r(n)$ also satisfies the second condition of Lemma \ref{tail_bound_lemma}.

\subsubsection{Evaluation of sum, Proof of part B of Theorem
\ref{small_composite_theorem}}
Before we evaluate the sum $\sum_{n \leq N} r(n)$ we introduce two more
parameters.  Let $\sigma (>\frac{1}{2})$ be the location of the saddle point in
the resulting Perron integral and set
\[\overline{u} = \frac{1}{(1+\epsilon)\sqrt{2}}\sqrt{\frac{M}{P}}
\frac{P^{1-\sigma}}{\sigma \log P};\] ultimately this will be chosen so that 
$\overline{u}\sim u$.  
The following Proposition characterizes our choice of
$P, \overline{u}$ and $\sigma$; they are taken to by any simultaneous solution
to the following system.

\begin{prp}\label{implicit_def}
 There exists a simultaneous solution $P, \overline{u}, \eta = \sigma \log P$
to the system of equations
\begin{enumerate}
 \item [a.] $\overline{u} = \frac{(MP)^{\frac{1}{2}}}{\eta e^\eta} \cdot
\frac{1}{(1 + \epsilon)\sqrt{2}}$
 \item [b.] $\overline{u} = \left\lfloor u \cdot \frac{\eta}{1 + \eta}\right
\rfloor$
 \item [c.] $\overline{u}
= \frac{\log N}{\log P} \cdot \frac{\eta}{1 + \eta} + \omega,$ for some
 $0 \leq \omega < \frac{2}{\log P},$
 \item [d.] $P>M$, $\log P \sim \log M$.
\end{enumerate}
Moreover, any solution to this system has $\eta = (1 + o(1)) \log M$.
\end{prp}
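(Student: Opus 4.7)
The plan is to reduce (a)--(c) to a scalar equation in $\eta$ parameterised by a positive integer $k$ (standing in for $\overline{u}$), solve it via the intermediate value theorem, and then choose $k^\ast$ to match the floor in (b).

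Fixing $\overline{u} = k$, condition (a) determines
\[
\log P(k,\eta) = 2\eta + 2\log(k\eta) - \log M + O(1),
\]
and substituting into (c), the requirement $\omega \in [0, 2/\log P)$ becomes the scalar condition
\[
F_k(\eta) := k(1+\eta)\log P(k,\eta) - \eta\log N \;\in\; [0,\, 2(1+\eta)).
\]
For $\eta \gtrsim \log M$, $F_k$ is continuous and strictly increasing in $\eta$ (its derivative is dominated by $2k\eta$), negative for small $\eta$, and tends to $+\infty$; the IVT therefore produces a unique $\eta_k^\ast$ with $F_k(\eta_k^\ast) = 0$, together with a small $\eta$-interval around it on which $F_k$ lies in the window.

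To locate $\eta_k^\ast$, write $\eta = \log M + \delta$ with $\delta = o(\log M)$ and balance dominant terms in $F_k = 0$; this yields $\delta = -\log\log N + O(1)$ provided $k \asymp u := \log N/\log M$. Since $\log\log N \ll \log M$ throughout the range $\log N = o(\log_2^2 q\log_3 q)$, one obtains $\eta_k^\ast = (1+o(1))\log M$, which is the asserted asymptotic. Back-substitution gives $\log P(k,\eta_k^\ast) \sim \log M$ and $\sigma = \eta_k^\ast/\log P \to 1$, which verifies (d).

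To match (b), observe that $u\eta_k^\ast/(1+\eta_k^\ast) = u - u/(1+\eta_k^\ast)$ moves by only $O(1/\log M)$ as $k$ changes by one (via the implicit $k$-dependence of $\eta_k^\ast$). Hence the natural choice $k^\ast = \lfloor u - u/\log M \rfloor$ satisfies $k^\ast = \lfloor u\eta_{k^\ast}^\ast/(1+\eta_{k^\ast}^\ast)\rfloor$, so the triple $(P(k^\ast,\eta_{k^\ast}^\ast), k^\ast, \eta_{k^\ast}^\ast)$ solves (a)--(c) simultaneously. The principal difficulty is precisely this matching step: the jumps of $F$ produced by direct substitution of the floor in (b) into $F_k$ would exceed the window width in (c); the resolution is to solve continuously in $\eta$ with $k$ fixed first, and only discretise in $k$ at the very end, with the $\omega$-slack of $2/\log P$ in (c) absorbing the remaining $O(1)$ error.
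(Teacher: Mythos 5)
Your reorganization of the argument is legitimate in outline and close in spirit to the paper's: you fix $\overline u = k$ and solve continuously in $\eta$ via the intermediate value theorem, whereas the paper parameterizes by $\eta$ from the start, defines $P(\eta)$ by forcing equality of (a) and (b) (so that $\overline u(\eta)=\lfloor u\eta/(1+\eta)\rfloor$ is a step function of $\eta$), and then tracks $\frac{\log N}{\log P(\eta)}\frac{\eta}{\eta+1}$ against $\overline u(\eta)$, checking that the jump of $\frac{\log N}{\log P(\eta)}$ at a discontinuity of the floor is at most $\frac{2+o(1)}{\log P}$. Both routes reduce to the same issue at the end: a discrete matching with an $O(1/\log P)$-size slack.

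The gap in your write-up is precisely in that matching step. First, the specific assertion that $k^\ast=\lfloor u - u/\log M\rfloor$ satisfies $k^\ast=\lfloor G(k^\ast)\rfloor$ with $G(k)=u\eta_k^\ast/(1+\eta_k^\ast)$ is never proved; the $O(1)$ correction in $\eta_k^\ast=\log M-\log\log N+O(1)$ is $k$-dependent and, when $u/\log M$ is large, can shift $G(k)$ by more than $1$, so the displayed closed form for $k^\ast$ is not reliable. Second, and more fundamentally, the slow-variation observation alone does not give existence: you correctly note that $G(k)$ changes by $O(1/\log M)$ per unit step in $k$, but that means $G(k)-k$ drops by $1+O(1/\log M)$, i.e.\ by strictly \emph{more} than one, so a priori it can skip the window $[0,1)$. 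You must then actually invoke the $\omega$-slack from (c), and your closing sentence, while pointing in the right direction, does not quantify it; in fact the $\omega$-slack $\omega\in[0,2/\log P)$ only widens the attainable values of $u\eta/(1+\eta)$ at fixed $k$ by $O(1/\log^2 M)$, which is \emph{smaller} than the $O(1/\log M)$ excess — so the accounting as stated does not close. (The paper's own accounting is in terms of the jump of $\frac{\log N}{\log P(\eta)}$, which compares directly to the slack $\frac{2}{\log P}$; reproducing that comparison in your discrete-$k$ formulation is what is actually needed here.)

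One smaller point: verifying $P>M$ in (d) from the asymptotic $\log P(k,\eta_k^\ast)\sim\log M$ is not valid, since the neglected $O(1)$ terms could make $\log P$ slightly below $\log M$. The paper obtains $P\ge M$ directly from (b) and (c): $\frac{\log N}{\log P}\frac{\eta}{\eta+1}=\overline u-\omega\le\lfloor u\frac{\eta}{\eta+1}\rfloor\le u\frac{\eta}{\eta+1}$, so $\log P\ge\log M$. You should verify the first half of (d) this way rather than from the asymptotic form of $\eta_k^\ast$.
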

\begin{proof}
 Suppose b and c are satisfied.  Then
\[\frac{\log N}{\log P} = \left \lfloor \frac{\log N}{\log
M}\frac{\eta}{\eta+1}\right\rfloor \frac{\eta + 1}{\eta} - \frac{\eta +
1}{\eta} \omega \leq \frac{\log N}{\log M},\] so in fact, the first part of
condition d is redundant. Recall $u = O(\log^2 M \log \log M)$.  Combining a and
b, 
\[MP = \eta^2 e^{2\eta}2 (1 + \epsilon)^2 \left\lfloor u \frac{\eta}{1 +
\eta}\right\rfloor^2 \Rightarrow M \ll \eta e^\eta \log^2 M \log_2 M \] and so
$\eta \geq \log M - 4\log_2 M$.  Thus b and c now imply $\log P \sim \log M$,
so condition d may be completely discarded.  Furthermore, this guarantees that
at a solution $\eta \sim \log M$.

A solution may now be found as follows: beginning from $\eta = \eta_0 = \log M
- 4 \log_2 M$, increase the value of $\eta$ while defining $P = P(\eta)$ by
requiring
\[\left\lfloor u \cdot \frac{\eta}{1 + \eta}\right
\rfloor =\frac{(MP)^{\frac{1}{2}}}{\eta e^\eta} \cdot
\frac{1}{(1 + \epsilon)\sqrt{2}}.\] Clearly $P(\eta) \to \infty$ as $\eta \to
\infty$ and $\frac{\log N}{\log P}\cdot \frac{\eta}{\eta + 1} \to 0$.  Since
$\frac{\log N}{\log P(\eta_0)} \frac{\eta_0}{1 + \eta_0} > \left\lfloor
u\frac{\eta_0}{1 + \eta_0}\right\rfloor$, the proof of existence is completed by
checking that $\frac{\log N}{\log P(\eta)} $ jumps by at
most $\frac{2 + o(1)}{\log P}$ at discontinuities of the floor function.
\end{proof}

We need
two specific consequences of Proposition \ref{implicit_def}.

\begin{enumerate}
 \item [i.] $\frac{\log N}{\log P} - \overline{u} \cdot \left(1 +
\frac{1}{\sigma
\log P}\right)= o\left(\sqrt{\frac{\overline{u}}{\log \overline{u}}}\right)$
\item  [ii.] $\frac{\log N}{\log P} -\frac{\overline{u}}{\sigma \log P} \in \zed
+ o\left(\sqrt{\frac{\overline{u}}{\log \overline{u}}}\log^{-1}P\right)$.
\end{enumerate}
Note also that $\log P \sim \log M$ and  $\overline{u} \sim u =
O(\log^2 M \log_2 M)$ implies $\sigma = 1-o(1)$.

For the above choice of $P$ we are going to give the following evaluation for
$\sum_{n \leq N} r(n)$.
\begin{prp}\label{sum_evaluation}
 We have
\[\sum_{n \leq N} r(n) \geq (1 + o(1))\frac{N^\sigma e^{\overline{u}}} {2 \pi
\sigma \overline{u}}, \qquad \qquad u \to \infty.\]
\end{prp}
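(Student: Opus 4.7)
The plan is to follow the saddle-point/Perron-integral strategy used in Proposition \ref{sum_evaluation_1}, adapted to the present resonator. Write
\[\sum_{n\leq N}r(n) = \frac{1}{2\pi i}\int_{(\sigma)}R(s)N^s\frac{ds}{s}, \qquad R(s)=\prod_p(1-r(p)p^{-s})^{-1},\]
and choose $\sigma$ as the saddle solving $\phi_1(\sigma) = \log N$, where $\phi_j(s) = (-1)^j(\log R)^{(j)}(s)$.

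The first task is to estimate $\phi_0(\sigma)$, $\phi_1(\sigma)$, and $\phi_2(\sigma)$. Because $r(p) = L\log p/p$ is supported on $(P, P^2)$ with $L\log p/p^{1+\sigma}$ small throughout that range, the $k=1$ term in each expansion $\phi_j(s) = \sum_p\log^{j-1}p\sum_{k\geq 1}k^{j-1}r(p)^k/p^{ks}$ dominates (the remainder contributes a factor $O(1/P)$). Mertens' theorem converts the leading prime sums into $L\int_{\log P}^{2\log P}u^{j-1}e^{-\sigma u}\,du$, which are evaluated by integration by parts with the endpoint contribution at $u=\log P$ dominating. Combining this with the identity $L = \overline{u}\sigma P^\sigma$ from condition (a) of Proposition \ref{implicit_def} and the saddle relation $\log N = \overline{u}(\log P + 1/\sigma) + o(\log P\sqrt{\overline{u}})$ from consequence (i), one obtains $\phi_0(\sigma) = \overline{u}(1+o(1))$, $\phi_1(\sigma) = \log N$, and $\phi_2(\sigma) \sim \overline{u}(\log P + 1/\sigma)^2$.

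Second, a standard Laplace analysis---applying Lemma \ref{truncation} to truncate at a suitable height, invoking an off-axis bound on $|R(\sigma+it)/R(\sigma)|$ (analogous to Lemma \ref{phi_0_bound}) for control away from $t = 0$, and Taylor-expanding $\phi_0(\sigma+it)$ to quadratic order---yields
\[\sum_{n\leq N}r(n) = (1+o(1))\frac{N^\sigma e^{\phi_0(\sigma)}}{\sigma\sqrt{2\pi\phi_2(\sigma)}} \sim \frac{N^\sigma e^{\overline{u}}\sqrt{\overline{u}}}{\sigma \log N\sqrt{2\pi}}.\]
Using $\log N = \overline{u}(\log P + 1/\sigma)$ and $N^\sigma = P^{\sigma\overline{u}}e^{\overline{u}}$ (from condition (c)), rearrangement shows this is at least $(1+o(1))N^\sigma e^{\overline{u}}/(2\pi\sigma\overline{u})$ when the implicit relations of Proposition \ref{implicit_def} place $\overline{u}$ large enough relative to $\log P$, namely $\sqrt{2\pi\overline{u}}\geq\log P + 1/\sigma$.

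The main obstacle is establishing the off-axis bound on $|R(\sigma+it)/R(\sigma)|$ for intermediate $|t|$ in the spirit of Lemma \ref{phi_0_bound}: unlike $\zeta(s, y)$, the Dirichlet series $R(s)$ here does not directly inherit bounds from $\zeta$, so the argument must proceed from the specific shape $r(p) = L\log p/p$ and the Mertens averaging over the short range $(P, P^2)$. A subtler challenge arises in the smaller-$\overline{u}$ regime, where the saddle-point asymptotic alone does not exceed the stated lower bound; there the proof may need to be supplemented by a direct combinatorial argument, bounding $\sum_{n\leq N}r(n)$ from below by the contribution from $n$ of the form $p_1\cdots p_{\overline{u}}$ with distinct $p_i \in (P, P^2)$, whose volume-of-simplex computation combined with Stirling's formula is what actually produces the constant $1/(2\pi\overline{u})$ in the stated lower bound.
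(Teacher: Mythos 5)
The central step in your proposal fails, and the failure is structural rather than confined to a peripheral regime. You Taylor-expand $\phi_0(\sigma+it)$ about $t=0$ to quadratic order and apply a single-saddle Laplace evaluation, arriving at
\[\sum_{n\leq N}r(n) \approx \frac{N^\sigma e^{\phi_0(\sigma)}}{\sigma\sqrt{2\pi\phi_2(\sigma)}} \sim \frac{N^\sigma e^{\overline{u}}\sqrt{\overline{u}}}{\sigma\sqrt{2\pi}\,\log N}.\]
Comparing this to the target $\tfrac{N^\sigma e^{\overline{u}}}{2\pi\sigma\overline{u}}$, the ratio is $\sqrt{2\pi\overline{u}}/\log P$ (using $\log N\sim\overline{u}\log P$). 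Throughout this section's range $\log N=o(\log_2^2 q\,\log_3 q)$ one has $\overline{u}\ll\log_2 q\,\log_3 q$ while $\log P\sim\log_2 q$, so $\sqrt{\overline{u}}\ll\log P$ always, not merely ``in the smaller-$\overline{u}$ regime.'' Your single-saddle estimate is thus smaller than the claimed lower bound by an unbounded factor $\log P/\sqrt{2\pi\overline{u}}$ across the entire stated range. The discrepancy is not something to be patched by a direct counting argument in an edge case; it signals that the Laplace analysis you propose is computing the wrong thing.

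The reason is that $r$ is supported on primes in the short interval $(P,P^2)$, so by Lemma~\ref{phi_0_determined} the phase satisfies $\phi_0(\sigma+it)\approx\overline{u}\,P^{-it}/(1+it/\sigma)$, which is approximately periodic in $t$ with period $2\pi/\log P$. The Perron integral therefore has a whole array of near-saddles at $t\approx 2\pi j/\log P$, and the central saddle captures only an $O(\sqrt{\overline{u}}/\log P)$ fraction of the total. The paper's proof partitions the contour into windows of width $2\pi/\log P$, writes $t=t_0+2\pi j/\log P$, and after Taylor expansion observes that the linear oscillatory phases in $j$ and $t_0$ are $o(1)$ --- but only because conditions~(i) and~(ii) of Proposition~\ref{implicit_def} were engineered precisely so that $\tfrac{\log N}{\log P}-\tfrac{\overline{u}}{\sigma\log P}$ is nearly an integer and $\tfrac{\log N}{\log P}-\overline{u}(1+\tfrac{1}{\sigma\log P})$ is $o(\sqrt{\overline{u}/\log\overline{u}})$. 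These conditions cause the $\asymp\log P/\sqrt{\overline{u}}$ significant secondary saddles to add constructively; a Gaussian sum over $j$ then recovers the missing factor. Your proposal never uses (i) or (ii) as phase-alignment conditions (only (a) and (c) algebraically), and so it has no mechanism for seeing the secondary saddles at all. The fix is not a volume-of-simplex combinatorial supplement but rather the periodicity-and-alignment argument, which is the heart of the paper's proof.
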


Before proceeding to the proof we introduce the generating function
\[R(s) = \prod_{p}(1 - r(p)p^{-s})^{-1}\] and  its logarithm 
\[\log R(s) = \phi_0(s)  = \sum_p \sum_{n = 1}^\infty
\frac{1}{n}\left(\frac{r(p)}{p^s}\right)^n\] and record several simple
properties.
\begin{lemma}\label{phi_0_determined}
Let $s = \sigma + it$.  Uniformly in $\frac{1}{2} \leq \sigma \leq 1$ and $|t|<
M$,
\[\phi_0(s) = \overline{u}\left\{\frac{P^{-it}}{1 + \frac{it}{\sigma}} +
O(\exp(-\sqrt{\log M}))\right\} = \overline{u}\frac{P^{-it}}{1 +
\frac{it}{\sigma}} + o(1).\]
\end{lemma}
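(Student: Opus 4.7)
The plan is to separate the logarithm $\phi_0(s) = \sum_p\sum_{n\geq 1} (r(p)/p^s)^n/n$ into its $n=1$ contribution and the higher-order terms, to evaluate the former by partial summation using the classical Prime Number Theorem, and to absorb the latter into the $o(1)$ error. The matching of the main term against $\overline{u} P^{-it}/(1+it/\sigma)$ is an algebraic computation from the definitions of $L$ and $\overline{u}$; the delicate point is maintaining uniformity in $|t|$ up to $M$.

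For the tail $n \geq 2$, Proposition \ref{implicit_def} gives $\eta = \sigma \log P = (1+o(1))\log M$, so $P^{\sigma} \sim M$, and the trivial bound $r(p) \leq L \log P/P = \sqrt{M/(2P)}/(1+\epsilon)$ for $P < p < P^2$ yields $r(p)/p^\sigma = O(1/M)$ uniformly in $\sigma \in [1/2,1]$. A standard partial summation shows $\sum_{P<p<P^2} r(p)^2/p^{2\sigma} \ll 1/(M\log M)$, and since subsequent powers only introduce additional factors of $O(1/M)$, the entire tail contribution to $\phi_0(s)$ is $O(1/(M\log M)) = o(1)$, absorbed into the error.

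For the $n=1$ piece $L\sum_{P<p<P^2} \log p/p^{1+s}$, I would use partial summation against $\theta(u) = u + R(u)$ with $R(u) = O(u\exp(-c\sqrt{\log u}))$ to obtain the smooth main term $L(P^{-s} - P^{-2s})/s$. A direct computation from the definitions gives the key identity $L P^{-s}/s = \overline{u} P^{-it}/(1 + it/\sigma)$, while the secondary piece $LP^{-2s}/s$ has magnitude $\overline{u} P^{-\sigma}/|1 + it/\sigma| \ll \overline{u} \exp(-\sqrt{\log M})$, since $\sigma \geq 1/2$ and $\log P \sim \log M$ force $P^{-\sigma} \leq P^{-1/2} \ll \exp(-\sqrt{\log M})$. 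This identifies the stated main term and shows the $P^{-2s}$ term is absorbed.

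The main obstacle is controlling the PNT remainder uniformly for $|t| \leq M$. The boundary contribution $L[R(u)u^{-1-s}]_P^{P^2}$ is bounded by $\sigma \overline{u} \exp(-c\sqrt{\log P})$ using the identity $L P^{-\sigma} = \sigma \overline{u}$, which is acceptable. The interior term $L(1+s)\int_P^{P^2} R(u) u^{-2-s}\,du$, however, only yields $|1+s|\,\overline{u} \exp(-c\sqrt{\log P})$ by trivial modulus bounds, and this is inadequate when $|t|$ is comparable to $M$. To recover uniformity I would recast the restricted prime sum through an inverse Mellin representation
\[\sum_{P<p<P^2}\frac{\log p}{p^{1+s}} = \frac{1}{2\pi i}\int_{(\delta)}\frac{P^{2w}-P^w}{w}\left(-\frac{\zeta'}{\zeta}(1+s+w)\right)dw + O(1),\]
shift the contour to $\Re w = -\sigma - \delta'$ for small fixed $\delta' > 0$ (so the residue at the simple pole $w=-s$ reproduces the main term $L(P^{-s}-P^{-2s})/s$), and bound the shifted integral using the classical zero-free region, where $-\zeta'/\zeta(1-\delta' + i\tau') \ll \log^2(|\tau'|+2)$. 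The resulting contribution is of size $L \cdot P^{-\sigma-\delta'} \log^2(|t|+2)$, and since $L P^{-\sigma} = \sigma \overline{u}$ and $P^{-\delta'}$ is of size $\exp(-\delta'\log M) \ll \exp(-\sqrt{\log M})$, the whole error fits into $\overline{u} \cdot O(\exp(-\sqrt{\log M}))$ as required.
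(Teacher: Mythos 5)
Your decomposition and bookkeeping are sound: the identity $LP^{-\sigma}=\sigma\overline{u}$ is correct, the algebraic matching $LP^{-s}/s=\overline{u}P^{-it}/(1+it/\sigma)$ is exactly right, the $n\geq 2$ tail is indeed $O(1/(M\log M))$ by the uniform bound $r(p)/p^{\sigma}\ll 1/M$, and the $P^{-2s}$ piece is absorbed because $\sigma\geq\tfrac12$ and $\log P\sim\log M$. You also correctly identify the real obstacle: naive partial summation against $\theta(u)=u+R(u)$ produces a factor $|1+s|$ that can be as large as $M$, destroying the savings when $|t|$ is close to $M$. The paper disposes of this lemma with a one-line ``follows from the PNT,'' so these details are worth working out.

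The gap is in your proposed fix. You shift the contour in the Mellin integral to a \emph{fixed} vertical line $\Re w = -\sigma-\delta'$ and then assert $-\zeta'/\zeta(1-\delta'+i\tau')\ll\log^2(|\tau'|+2)$. Unconditionally this is false for any fixed $\delta'>0$: the classical zero-free region has the variable width $\Re z\geq 1-c/\log(|\operatorname{Im}z|+2)$, and for large $|\tau'|$ the line $\Re z=1-\delta'$ lies to the left of it, where $\zeta$ may have zeros and $-\zeta'/\zeta$ has poles. The $\log^2$ bound on $-\zeta'/\zeta$ is only available on (or to the right of) the curved boundary of the zero-free region. Your estimate ``$L\cdot P^{-\sigma-\delta'}\log^2(|t|+2)$'' therefore has no justification.

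The correct version of the contour argument is: truncate the Perron integral at height $T$ (truncation error $\ll \log^2 M/T$), then shift to the curved contour $\Re(1+s+w)=1-c/\log(|\operatorname{Im}(1+s+w)|+2)$. On this contour one can indeed invoke $-\zeta'/\zeta\ll\log^2$. Taking $T=\exp(\sqrt{\log M})$, the truncation error is $\ll\exp(-\tfrac12\sqrt{\log M})$, and for $|t|\ll\log^{O(1)}M$ the whole truncated contour sits at $\Re w\leq -\sigma-c/\sqrt{\log M}$, so the factor $P^{\Re w}/P^{-\sigma}\leq\exp(-c\log P/\sqrt{\log M})\ll\exp(-c'\sqrt{\log M})$ delivers the claimed saving, and the integral length contributes only a polylog factor. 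This is the range $|t|\leq\overline{u}^3=\log^{O(1)}M$ that Proposition~\ref{sum_evaluation} actually uses. For $|t|$ genuinely comparable to $M$, however, this mechanism only yields a bounded saving (because $c\log P/\log(|\operatorname{Im}z|+2)\asymp c$ there), so you should not expect to prove the stated error $O(\overline{u}\exp(-\sqrt{\log M}))$ uniformly for all $|t|<M$ — restrict the claim to $|t|\ll\log^{O(1)}M$, which is all the rest of the argument requires.
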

\begin{proof}
This follows from the Prime Number Theorem.
\end{proof}
\begin{lemma}\label{phi_0_bound}
Let $\frac{1}{2} < \sigma < 1$ and set $\tau = 10 \left(\frac{\log
\overline{u}}{\overline{u}}\right)^{\frac{1}{2}}.$
\begin{enumerate}
 \item \[\Re\left[1 - \frac{P^{-it}}{1 + \frac{it}{\sigma}}\right] \geq \left\{
\begin{array}{lll} \frac{t^2}{5}, && |t| < \frac{1}{2}\\ \frac{1}{20} && |t| >
\frac{1}{2}\end{array}\right.\]
\item For $t = t_0 + \frac{2\pi j}{\log P}$, $j \in \zed$, $|j| < \frac{\tau
\log P}{2\pi}$ and $\frac{100 \tau}{\log P} < |t_0| < \frac{\pi}{\log P}$ we
have
\[\Re\left[1 - \frac{P^{-it}}{1 + \frac{it}{\sigma}}\right] \geq \frac{t_0^2
\log^2 P}{20}.\]
\end{enumerate}

\end{lemma}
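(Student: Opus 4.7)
The plan is to verify the two assertions separately; both follow from direct estimation once one exploits $|P^{-it}| = 1$.

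For part (1), write $w = P^{-it}/(1+it/\sigma)$. Since $|w| = 1/\sqrt{1 + t^2/\sigma^2}$, we have
$$\Re[1-w] \geq 1 - |w| = 1 - (1 + t^2/\sigma^2)^{-1/2}.$$
For $|t| < 1/2$, the assumption $\sigma > 1/2$ gives $t^2/\sigma^2 < 1$, and the elementary inequality $1 - (1+x)^{-1/2} \geq x/3$ valid on $[0,1]$ yields $\Re[1-w] \geq t^2/(3\sigma^2) > t^2/5$. For $|t| > 1/2$, $\sigma < 1$ forces $t^2/\sigma^2 \geq t^2 > 1/4$, hence $|w| \leq 2/\sqrt{5}$ and $\Re[1-w] \geq 1 - 2/\sqrt{5} > 1/20$.

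For part (2), the decisive observation is that $P^{-2\pi i j/\log P} = e^{-2\pi i j} = 1$, so $P^{-it} = P^{-it_0}$ for every $t$ of the stated form. Setting $\zeta = 1 - P^{-it_0}$ (so that $\Re\zeta = 2\sin^2(t_0\log P/2)$ and $\Im\zeta = \sin(t_0\log P)$), one rewrites $1 - P^{-it_0}/(1 + it/\sigma) = (\zeta + it/\sigma)/(1 + it/\sigma)$ and, multiplying numerator and denominator by $1 - it/\sigma$, obtains
$$\Re\left[1 - \frac{P^{-it_0}}{1 + it/\sigma}\right] = \frac{2\sin^2(t_0\log P/2) + t^2/\sigma^2 + (t/\sigma)\sin(t_0\log P)}{1 + t^2/\sigma^2}.$$
Completing the square in $t/\sigma$ and using $\sin(t_0\log P) = 2\sin(t_0\log P/2)\cos(t_0\log P/2)$ recasts the numerator as
$$\left(\frac{t}{\sigma} + \tfrac{1}{2}\sin(t_0\log P)\right)^2 + \sin^2(t_0\log P/2)\bigl[2 - \cos^2(t_0\log P/2)\bigr] \geq \sin^2(t_0\log P/2).$$
Since $|t_0\log P| < \pi$, the bound $\sin^2 x \geq 4x^2/\pi^2$ for $|x| \leq \pi/2$ gives $\sin^2(t_0\log P/2) \geq (t_0\log P)^2/\pi^2$. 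Finally, the restrictions $|t_0| < \pi/\log P$ and $|j| < \tau\log P/(2\pi)$ imply $|t| \leq \pi/\log P + \tau$, and under the standing asymptotic conditions (in the operative range $\log P \sim \log M \to \infty$ and $\tau \to 0$) one has $1 + t^2/\sigma^2 \leq 20/\pi^2$, yielding the claimed bound $\geq t_0^2 \log^2 P/20$.

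The only subtle point is the cross term $(t/\sigma)\sin(t_0\log P)$ in the numerator of part (2): it can be of either sign depending on the relative signs of $t$ and $t_0$, and can compete in magnitude with the main $\sin^2(t_0\log P/2)$ term. The completing-the-square rearrangement is precisely what produces a manifestly non-negative remainder $\sin^2(t_0\log P/2)[2 - \cos^2(t_0\log P/2)] \geq \sin^2(t_0\log P/2)$, independently of the sign of the cross term. The lower restriction $|t_0| > 100\tau/\log P$ plays no role in the bound established here; it presumably quantifies the separation of the ``non-trivial'' $t_0$ regime from $t_0 = 0$ required by the downstream application of the lemma in the Perron-style integral estimate.
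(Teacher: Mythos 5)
The paper itself gives no proof of this lemma, so you are filling a gap rather than paralleling an existing argument. Your part (2) is correct and, I think, is the natural proof: exploiting the periodicity $P^{-it}=P^{-it_0}$ for $t = t_0 + 2\pi j/\log P$, clearing the denominator, completing the square in $t/\sigma$ to absorb the sign-indefinite cross term, and then bounding $\sin^2(t_0\log P/2)\geq t_0^2\log^2 P/\pi^2$ on $|t_0\log P|<\pi$. The remaining step, showing $1+t^2/\sigma^2\leq 20/\pi^2$, is indeed only valid asymptotically, but that is legitimate here since $|t|\leq \pi/\log P + \tau$ with $\log P\sim\log M\to\infty$ and $\tau = 10\sqrt{\log\overline u/\overline u}\to 0$ (as $\overline u\sim u\to\infty$). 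Your reading of the restriction $|t_0|>100\tau/\log P$ as irrelevant to the inequality itself, serving only the downstream truncation of the integral in the proof of Proposition~\ref{sum_evaluation}, is also correct.

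Part (1) contains one concrete numerical error. The asserted elementary inequality $1-(1+x)^{-1/2}\geq x/3$ is false near $x=1$: at $x=1$, $1-1/\sqrt2\approx 0.293 < 1/3$. Since $x=t^2/\sigma^2$ genuinely ranges up to values near $1$ (e.g.\ $|t|\to 1/2$, $\sigma\to 1/2$), the step as written fails. The fix is to use the weaker but correct bound $1-(1+x)^{-1/2}\geq x/4$ on $[0,1]$ (the function $f(x)=1-(1+x)^{-1/2}-x/4$ has $f(0)=0$, $f'(0)=1/4>0$, a unique interior critical point at $x=2^{2/3}-1$ where $f>0$, and $f(1)=1-1/\sqrt2-1/4>0$), which still yields $\Re[1-w]\geq t^2/(4\sigma^2)>t^2/5$ since $\sigma<1$. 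The $|t|>1/2$ branch of part (1) is correct as written. With that one constant repaired, the whole argument goes through.
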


\begin{proof}[Proof of Proposition \ref{sum_evaluation}]
 Applying Lemma \ref{truncation} with $T = \overline{u}^3$ we find
\begin{align*}\sum_{n \leq N} r(n) = &\frac{N^\sigma e^{\phi_0(\sigma)}}{2\pi
\sigma } \Biggl\{\int_{-\frac{1}{2}}^{\frac{1}{2}} \exp\left(it \log N +
\phi_0(\sigma + it) - \phi_0(\sigma)\right)\frac{dt}{1 + \frac{it}{\sigma}} \\&
\qquad\qquad + O(\overline{u}^{-3/2}) + O\left(\sup_{\frac{1}{2} < |t| <
\overline{u}^{3}} |\exp(\phi_0(\sigma + it) - \phi_0(\sigma))| \log
\overline{u}\right)\Biggr\}\end{align*}  On this range, Lemma
\ref{phi_0_determined} gives $\phi_0(\sigma + it) = \overline{u}\frac{P^{-it}}{1
- \frac{it}{\sigma}} + o(1)$, and so the bound of Lemma \ref{phi_0_bound} gives
that the second error term is $O(e^{-c \overline{u}}\log \overline{u})$, so
that both errors are permissible.  The main integral is thus
\[\int_{|t|< \tau + \frac{\pi}{\log P}} \exp\left(it \log N +
\overline{u}\left[\frac{P^{-it}}{1 + \frac{it}{\sigma}} - 1\right]\right)(1 +
o(1)) dt
+ O\left(\int_{\tau < |t| < \frac{1}{2}} \exp(-\overline{u}\frac{t^2}{10})
dt\right).\] 
Again the error is permissible.  Partitioning the integral into short intervals
we obtain
\begin{align*} &\sum_{|j| < \frac{\tau \log P}{2 \pi}} \int_{\frac{2\pi}{\log
P}(j-\frac{1}{2})}^{\frac{2\pi}{\log P}(j + \frac{1}{2})} \exp\left(it \log N +
\overline{u}\left[\frac{P^{-it}}{1
 + \frac{it}{\sigma}} - 1\right]\right)(1 + o(1)) dt
\\ & =\sum_{|j| < \frac{\tau \log P}{2 \pi}} \int_{-\frac{\pi}{\log
P}}^{\frac{\pi}{\log P}} \exp\left(i \log N\left(\frac{2\pi j}{\log P} 
+ t_0\right) + \overline{u}\left[\frac{e^{i t_0 \log P}}{1 +
\frac{i}{\sigma} \left(\frac{2\pi j}{\log P} + t_0\right)} -1\right]\right)(1 +
o(1)) dt_0\\
&=\frac{1}{\log P} \sum_{|j| < \frac{\tau \log P}{2\pi}} e\left(\frac{\log
N}{\log
P} j\right) \int_{|t_0| < 100 \tau} \exp\left(i \frac{\log N}{\log P}t_0 +
\overline{u}\left[\frac{e^{-it_0}}{1 + \frac{i(2\pi j + t_0)}{\sigma \log
 P}} - 1\right] \right)(1+ o(1))dt_0
\end{align*}
with an error of $o(\overline{u}^{-1})$ from truncating the integral (apply
Lemma \ref{phi_0_bound} (2)).  

We Taylor expand the inner bracket as
\begin{align*}& \left(1 - it_0 - \frac{t_0^2}{2} + O(\tau^3)\right)\left(1 -
\frac{i(2\pi j +
t_0)}{\sigma \log P} - \left(\frac{2\pi j + t_0}{\sigma \log P}\right)^2 +
O(\tau^3)\right) - 1
\\&= -i t_0 - i \frac{2\pi j + t_0}{\sigma \log P} - \frac{t_0^2}{2} -
t_0\left(\frac{2\pi j + t_0}{\sigma \log P}\right) - \left(\frac{2\pi j +
t_0}{\sigma \log P}\right)^2 + O(\tau^3)
\end{align*}
The conditions (i) and (ii) on $P, \sigma, \overline{u}$ have been specifically
made so that the linear oscillatory phases in both the sum and the integral are
now $o(1)$ throughout the range of integration.  Thus we obtain
\[\frac{1+o(1)}{\log P} \sum_{|j| < \frac{\tau \log P}{2\pi}} e^{-\overline{u}
\left(\frac{2\pi j}{\sigma \log P}\right)^2} \int_{|t_0| < 100 \tau}
e^{-\overline{u}\left(t_0^2\left(\frac{1}{2} - \frac{1}{\sigma \log P}\right) -
t_0 \left(\frac{2\pi j}{\sigma \log P} + \frac{2\pi j}{(\sigma \log
P)^2}\right)\right)} dt_0.\]  Estimating the integral, completing the sum to an
infinite one, and then applying Poisson summation yields that this is $\geq
\frac{1-o(1)}{\overline{u}}$.

\end{proof}

\subsubsection{Evaluation of saddle point asymptotics, Proof of part C of
Theorem \ref{small_composite_theorem}}
Recall the notation $u = \frac{\log N}{\log M}$, $u' = \frac{\eta}{1 +
\eta} u$, and that we set $\overline{u} = \lfloor u' \rfloor$.  

To prove the first part of Theorem 2 it only remains to
estimate $N^\sigma$. From Proposition \ref{implicit_def} we have [recall $\eta =
\sigma \log P$]
\[\frac{P^{1-\sigma}}{\sigma \log P}\left(\frac{M}{P}\right)^{\frac{1}{2}}
\frac{1}{(1+\epsilon)\sqrt{2}} = \frac{\log N}{\log P} \cdot
\frac{\eta}{1+\eta} + \omega\] with $0 \leq \omega < \frac{2}{\log P}$. Put
$\delta = 1-\sigma$.  Then
\[\delta \log P = \frac{1}{2}(\log P - \log M) + \log((1 + \epsilon) \sqrt{2})
+ \log(1-\delta) + \log \log N  + \log \frac{\eta}{\eta + 1} +
O(\log^{-1} N)\] from which it follows
\[\delta \log N \leq o(1) + \left[\frac{1}
{2} \log N\left(1- \frac{\log M}{\log P} \right) +\frac{\log N}{\log
P}\left(\log_2 N + \log((1 + \epsilon)\sqrt{2})\right)\right].\]
 Write \[\frac{\log M}{\log P} =
\frac{\log N}{\log P}\cdot \frac{\log M}{\log N} = \frac{\frac{\eta +
1}{\eta}\left(\overline{u} - \omega\right)}{u} =\frac{\lfloor u' \rfloor}{u'} -
O(\log^{-1} N).\]  Furthermore, 
\[\frac{\log N}{\log P} \leq (1 + O(\log^{-1}M))\overline{u}.\]  Putting this
together we deduce
\[N^\sigma e^{\overline{u}}\gg \frac{N^{\frac{1}{2} + \frac{\lfloor u'
\rfloor}{u'}}}{(\log N)^{\lfloor u' \rfloor}} \left(\frac{e}{\sqrt{2}
+o(1)}\right)^{\lfloor u'\rfloor}\] which proves the Theorem for small $N$.

\subsection{Case $\log N \gg \log_2^3 q \log_3 q$}
Recall that we set $M$ to be minimal such that $\sum_{p < M}\log p > \log q$,
and that we choose $x = \frac{\phi(q)}{N}$.

Let $\sigma> \frac{1}{2} + \frac{1}{\log_2 q}$ be a parameter and define
completely multiplicative function
$r_\sigma(n)$ by
\[ r_\sigma(p) = \left\{\begin{array}{lll} \frac{\lambda}{p^\sigma}, && M < p <
M^2\\ 0, && \text{otherwise}\end{array}\right.; \qquad 
\lambda = (1 -\epsilon)\sqrt{\frac{2\sigma - 1}{2\sigma}} M^\sigma,\] where
$\epsilon = \frac{C}{\log_2 q}$ for a sufficiently large constant $C$.

\subsubsection{Bound for tail of squares, Proof of part A of  Theorem
\ref{small_composite_theorem}}
As we have previously, we set $y = \frac{x}{N}$, $\alpha = \log^{-1}y \log_2^2
y$ and apply seek to apply Lemma \ref{tail_bound_lemma}. 

For $\epsilon = \frac{C}{\log_2 q}$ with $C$ sufficiently large,
\[\sum_{M<p<M^2} \log p \frac{r_\sigma(p)^2}{1-r_\sigma(p)^2} \leq
(1-\epsilon)(2\sigma - 1) \sum_{M < p < M^2} \frac{\log
p}{p^{2\sigma}} < \log y (1 - \log_2^{-1}y),\] so that the first condition of
the lemma is satisfied.
Meanwhile, for some constant $c > 0$,
\[ \sum_{M < p < M^2}\sum_{n > \frac{\log y}{\log p \log_2^4 y}}
f(p)^{2n}p^{\alpha n}\ll M^2 e^{-c \frac{\log y}{\log_2^5 y}} =
o(1),\] so that the second condition is also satisfied.  Thus Lemma
\ref{tail_bound_lemma} gives (uniformly in $\sigma$) that \[\sum_{n \leq
\frac{x}{N}} r_\sigma(n)^2 = (1 + o(1)) \sum_n r_\sigma(n)^2\] so that
$\Delta(N, q) \geq (1 + o(1)) \sum_{n \leq N} r_\sigma(n).$

\subsubsection{Asymptotic analysis of sum, Proof of part B of Theorem
\ref{small_composite_theorem}}
Introduce the generating function
\[ R(s) = R_\sigma(s) = \prod_p (1-r_\sigma(p)p^{-s})^{-1}\] and its logarithm
and derivatives
\[\phi_0(s) = \log R(s), \qquad \phi_j(s) = (-1)^j \frac{d^j}{ds^j}\phi_0(j),\;
j \geq 0.\]  We prove the following evaluation of $\sum_{n \leq N}r_\sigma(n)$.

\begin{prp}\label{sum_eval_theorem_2}
 Let $N$ satisfy $\log_2^3 q \log_3 q\ll \log N < \sqrt{\log q}\log_2^{-1} q$
and define $\sigma$ by\footnote{Note that the function
$\phi_1$ itself implicitly depends upon the parameter $\sigma$ through
$R_\sigma(s)$.} $\phi_1(\sigma) = \log N$.  We have
\[\sum_{n \leq N} r_\sigma(n) \sim \frac{N^\sigma \exp(\phi_0(\sigma))}{\sigma
\sqrt{2\pi \phi_2(\sigma)}}.\]  
\end{prp}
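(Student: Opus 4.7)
The plan is to follow the Perron integral plus saddle point template already used in the proof of Proposition \ref{sum_evaluation_1}, suitably adapted to the new resonator $r_\sigma$. By Perron's formula together with Lemma \ref{truncation} one writes
\[
\sum_{n \leq N} r_\sigma(n) = \frac{1}{2\pi i}\int_{\sigma - iT}^{\sigma + iT} R_\sigma(s) N^s \frac{ds}{s} + O(\text{truncation error})
\]
with $T$ a suitable polynomial in $\phi_2(\sigma)$. The leading contribution should come from a short interval $|t| \leq t_0$ on which the Taylor expansion
\[
\phi_0(\sigma + it) = \phi_0(\sigma) - it\,\phi_1(\sigma) - \frac{t^2}{2}\phi_2(\sigma) + O(t^3 |\phi_3(\sigma)|)
\]
is valid with cubic error $o(1)$. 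By the defining relation $\phi_1(\sigma) = \log N$ the linear phase cancels the $N^{it}$ factor coming from Perron, and the resulting Gaussian integral produces the $1/(\sigma\sqrt{2\pi\phi_2(\sigma)})$ factor in the claimed asymptotic.

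Two auxiliary estimates are needed. First, explicit asymptotics for $\phi_j(\sigma)$, $j = 0, 1, 2, 3$: since $r_\sigma$ is supported on primes in $(M, M^2)$ with value $\lambda p^{-\sigma}$, after verifying that contributions from prime powers $n \geq 2$ are negligible (which follows from $\lambda/M^\sigma < 1$ for the $\sigma$ under consideration), the prime number theorem yields
\[
\phi_j(\sigma) \sim \lambda \sum_{M < p < M^2} \frac{\log^{j-1}p}{p^{2\sigma}} \sim \frac{\lambda \log^{j-1}M \cdot M^{1-2\sigma}}{2\sigma - 1},
\]
so $\phi_0(\sigma) \sim \phi_1(\sigma)/\log M$, $\phi_2(\sigma) \sim \phi_1(\sigma)\log M$, and $\phi_3(\sigma) \sim \phi_1(\sigma)\log^2 M$. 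In the assumed range $\log_2^3 q\log_3 q \ll \log N < \sqrt{\log q}\log_2^{-1}q$ for $\log N$, solving $\phi_1(\sigma) = \log N$ forces $(2\sigma - 1)\log_2 q \to \infty$ and $(1-\sigma)\log_2 q \to \infty$, so there is room for a choice $(\phi_1 \log M)^{-1/2} \ll t_0 \ll \phi_3(\sigma)^{-1/3}$. Second, one needs a lower bound for $\Re[\phi_0(\sigma) - \phi_0(\sigma + it)]$ on the range $|t| > t_0$, analogous to Lemma \ref{phi_0_bound} from the short-prime proof. Because $r_\sigma$ is supported on primes whose logarithms span the octave $(\log M, 2\log M)$, the sum
\[
\lambda \sum_{M < p < M^2} \frac{1-\cos(t\log p)}{p^{2\sigma}}
\]
does not suffer from approximate periodicity in $t$, and can be bounded below by a positive multiple of $\min(t^2 \phi_2(\sigma),\,\phi_0(\sigma))$ via the prime number theorem and elementary oscillatory integral estimates.

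With these estimates in hand, the Gaussian main term from $|t| < t_0$ evaluates to $(1+o(1))/(\sigma\sqrt{2\pi\phi_2(\sigma)})$; the contribution from $t_0 < |t| < T$ is negligible because $\exp(\Re[\phi_0(\sigma + it) - \phi_0(\sigma)])$ is exponentially small there (using $\phi_0(\sigma) \sim u \to \infty$ in the assumed range); and the truncation error from Lemma \ref{truncation} is absorbed. Combining these pieces yields the asserted asymptotic.

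The main technical obstacle will be the lower bound for $\Re[\phi_0(\sigma) - \phi_0(\sigma + it)]$ on the intermediate range $t_0 \ll |t| \ll 1$. In the short-prime case this was proved via Mellin inversion and the classical zero-free region for $\zeta$; here, since $r_\sigma$ is supported on the short dyadic band $(M,M^2)$, one can avoid Mellin machinery and argue directly with the weighted prime sum, but one still has to verify that cancellations in the cosine sum do not conspire with the $1/s$ factor to produce spurious saddle-point contributions, and that the resulting bound is uniform in the permitted range of $\sigma$.
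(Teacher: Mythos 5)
Your overall plan is the same as the paper's: Perron plus Lemma \ref{truncation}, Taylor expansion near the real axis with the linear phase cancelling by the choice of $\sigma$, Gaussian main term of size $(2\pi\phi_2(\sigma))^{-1/2}$, and decay of $\Re[\phi_0(\sigma)-\phi_0(\sigma+it)]$ away from $t=0$ to kill the rest of the contour. The $\phi_j$ asymptotics $\phi_j(\sigma)\sim\phi_0(\sigma)\log^j M$ you write down also match what the paper proves (Lemma \ref{phi_evaluation_smooth_long} parts (1),(2)), and the use of $|\phi_3(\sigma+it)|\le\phi_3(\sigma)$ to control the Taylor remainder is correct.

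However, the off--axis lower bound you assert is wrong, and in a way that matters. You claim $\Re[\phi_0(\sigma)-\phi_0(\sigma+it)]\gg\min\bigl(t^2\phi_2(\sigma),\,\phi_0(\sigma)\bigr)$ on the grounds that the support of $r_\sigma$ in a full octave $(M,M^2)$ ``does not suffer from approximate periodicity.'' It does: near $t=2\pi/\log M$ the phases $t\log p$ for $p$ close to $M$ are close to multiples of $2\pi$, and since $p^{-2\sigma}$ concentrates mass on $p$ with $\log p - \log M\ll(2\sigma-1)^{-1}$, the quantity $\sum_{M<p<M^2}p^{-2\sigma}(1-\cos(t\log p))$ dips well below $\phi_0(\sigma)$ once $(2\sigma-1)\log M\to\infty$. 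The correct bound (Lemma \ref{phi_evaluation_smooth_long}(5)) is only $\Re[\phi_0(\sigma)-\phi_0(\sigma+it)]\gg\phi_0(\sigma)\min\bigl(\tfrac{t^2}{(2\sigma-1)^2},1\bigr)$ for $\tfrac{1}{2\log M}<|t|<M$, which is weaker than your claim by a factor as large as $(2\sigma-1)^2\log^2 M$. This is not a cosmetic loss: with the true bound, the contribution from $\tfrac{1}{2\log M}<|t|<2\sigma-1$ to the contour integral is of size roughly $\tfrac{1}{\log M}\exp\bigl(-c\,\phi_0(\sigma)/((2\sigma-1)^2\log^2 M)\bigr)$, and making this $o\bigl(\phi_2(\sigma)^{-1/2}\bigr)$ requires $\log N\gg(2\sigma-1)^2\log_2^3 q\log_2\log N$. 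When $\sigma$ is near $1$ (which happens at the lower end of the allowed range of $N$, where $(2\sigma-1)\log M\sim\log_2 q$), this is precisely the hypothesis $\log_2^3 q\log_3 q\ll\log N$ of the proposition. With your (too strong) bound this constraint would be invisible, so your argument as written would appear to prove the proposition without the lower bound on $\log N$, which is not correct.
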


In order to prove this Proposition by the saddle point method we need the
following estimates.

\begin{lemma}\label{phi_evaluation_smooth_long}
 Let $\sigma$ satisfy $2\sigma - 1 > \frac{\log_2 M}{\log M}$.  Then uniformly
in $\sigma$,
\begin{enumerate}
 \item $\phi_j(\sigma) = (1 + O(M^{-\frac{1}{2}})) \sum_{M < p < M^2}
\frac{r_\sigma(p)
\log^j p}{p^\sigma}.$
 \item $\phi_j(\sigma) = (1 + o(1)) \log^j M \phi_0(\sigma)$
 \item $|\phi_3(\sigma + it)| \leq \phi_3(\sigma)$
 \item For $|t| < \frac{1}{2\log M}$,
\[ \Re[\phi_0(\sigma) - \phi_0(\sigma + it)] \gg t^2 \phi_2(\sigma)\]
 \item For $\frac{1}{2\log M} < |t| < M$,
\[\Re [\phi_0(\sigma) - \phi_0(\sigma + it)] \gg \phi_0(\sigma)
\min\left(\frac{t^2}{(2\sigma -1)^2}, 1\right).\]
\end{enumerate}

\end{lemma}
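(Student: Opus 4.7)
The plan is to dispatch parts (1)--(3) by direct estimates and to handle (4) by a routine Taylor expansion, reserving the real work for the oscillatory lower bounds in (5).

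For part (1), write $\phi_j(\sigma) = \sum_p \log^j p \sum_{n\geq 1} n^{j-1}(r_\sigma(p)/p^\sigma)^n$ and note that on the support $M < p < M^2$ one has $r_\sigma(p)/p^\sigma = \lambda/p^{2\sigma} \leq \lambda/M^{2\sigma} \leq M^{-\sigma} \leq M^{-1/2}$; the $n\geq 2$ contribution is therefore at most $O(M^{-1/2})$ times the $n=1$ term (with the geometric tail absorbed into the implied constant, since $j$ is fixed). For part (2), apply the prime number theorem to replace $\sum_{M<p<M^2} \log^j p / p^{2\sigma}$ by $\int_{\log M}^{2\log M} u^{j-1} e^{-(2\sigma-1)u} du$, which, because $(2\sigma-1)\log M \geq \log_2 M \to \infty$, is concentrated near $u=\log M$ and evaluates to $\sim \log^{j-1} M \cdot M^{-(2\sigma-1)}/(2\sigma-1)$; taking the ratio against $j=0$ yields $\phi_j(\sigma) \sim \log^j M \, \phi_0(\sigma)$. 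Part (3) is simply the triangle inequality for the defining series of $\phi_3$, since the coefficients $r_\sigma(p)$ are nonnegative and $|p^{-it}|=1$.

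For part (4), Taylor expand $\phi_0$ about $\sigma$: using $\phi_0^{(k)}(\sigma)=(-1)^k \phi_k(\sigma)$,
\[
\phi_0(\sigma+it) - \phi_0(\sigma) = -it\phi_1(\sigma) - \tfrac{t^2}{2}\phi_2(\sigma) + O\bigl(|t|^3 \sup_{|\tau|\le|t|}|\phi_3(\sigma+i\tau)|\bigr).
\]
By part (3) the remainder is $O(|t|^3 \phi_3(\sigma))$, and by part (2) this is $O(|t|^3 \log M \cdot \phi_2(\sigma))$. Taking real parts and restricting to $|t|<1/(2\log M)$ makes the remainder a small fraction of the main term $\tfrac{t^2}{2}\phi_2(\sigma)$, giving the claimed bound.

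The main work is part (5). Writing $\eta = 2\sigma-1$ and using part (1), reduce to the lower bound
\[
\lambda \sum_{M<p<M^2}\frac{1-\cos(t\log p)}{p^{2\sigma}} \gg \phi_0(\sigma)\min\!\Bigl(\frac{t^2}{\eta^2},\,1\Bigr).
\]
By PNT with classical error (which is absorbable for all $|t|<M$), the sum may be compared to the integral $\int_{\log M}^{2\log M}(1-\cos(tu)) e^{-\eta u}/u\,du$, and upon substituting $v = \eta u$, $\xi = t/\eta$, to
\[
\int_{\eta\log M}^{2\eta\log M}(1-\cos(\xi v))\,e^{-v}\frac{dv}{v}.
\]
The base integral (without the $1-\cos$ factor) is dominated at the lower endpoint and is $\sim M^{-\eta}/(\eta \log M) \asymp \phi_0(\sigma)/\lambda$. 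For $\xi(\eta\log M)\le 1$, i.e.\ $|t|\le 1/\log M$, use $1-\cos(\xi v)\ge c\xi^2 v^2$ to get a lower bound of $\asymp \xi^2 (\eta\log M)^2$ times the base integral, which is $\gg t^2/\eta^2$ since $\log M > 1/\eta$. For $|t|>1/\log M$, partition $[\log M,2\log M]$ into intervals of length $\ge 2\pi/|t|$ (each containing many primes by PNT since $|t|<M$); on each such interval the average of $1-\cos(tu)$ exceeds a positive absolute constant, so the restricted sum is $\gg$ the base integral, yielding $\gg \phi_0(\sigma)$, which dominates $\min(t^2/\eta^2,1)\phi_0(\sigma)$.

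The hard part is the uniform treatment in (5) for the entire range $1/(2\log M)<|t|<M$: the small-$t$ Taylor analysis and the large-$t$ positive-average argument must meet, and PNT error terms must be shown to be absorbable up to $|t|$ approaching $M$. The classical zero-free region suffices for this uniformity, and the case transition happens precisely at $|t|\asymp 1/\log M$ where both estimates give a ratio of $\asymp 1$ against the base integral.
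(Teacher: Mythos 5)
Your parts (1)--(3) are fine, and your Taylor-expansion route for (4) is a valid alternative to the paper's more direct argument (the paper drops to the $n=1$ term in $\phi_0(\sigma)-\phi_0(\sigma+it)=\sum_{p,n}\frac{r_\sigma(p)^n}{np^{n\sigma}}(1-\cos(nt\log p))$ by positivity and then uses $1-\cos\theta\gg\theta^2$ for $|\theta|<1$; your route via the remainder and parts (2)--(3) works as well, though the constant is only a factor $\sim 6$ to spare).

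The large-$t$ half of (5) has a genuine gap. Your partition-into-intervals argument for $|t|>1/\log M$ treats the average of $1-\cos(t\log p)$ over windows of length $2\pi/|t|$ in $\log p$ as if the weight were uniform across each window, but the weight $p^{-2\sigma}$ decays like $e^{-(2\sigma-1)\log p}$ and is concentrated in a band of width $\asymp 1/(2\sigma-1)$ above $\log p=\log M$. When $1/\log M<|t|<2\sigma-1$ this band is much narrower than one period $2\pi/|t|$, so if $t\log M$ happens to be near a nonzero multiple of $2\pi$ (e.g.\ $t=2\pi/\log M$, which lies in your range since $(2\sigma-1)\log M>\log_2 M$) then $1-\cos(t\log p)$ is $O\bigl((t/(2\sigma-1))^2\bigr)$ on essentially all of the weight, and the weighted sum is $\asymp \phi_0(\sigma)\,t^2/(2\sigma-1)^2$, which is $o(\phi_0(\sigma))$. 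So the claim ``$\gg\phi_0(\sigma)$ for all $|t|>1/\log M$'' is false; it only becomes true once $|t|\gtrsim 2\sigma-1$. The $\min(t^2/(2\sigma-1)^2,1)$ in the statement is precisely the correct interpolation, and it is not recoverable from the unweighted-average heuristic. The paper handles this by first passing to $\sum_n\Lambda(n)n^{-2\sigma}(1-n^{-it})$ and evaluating it exactly (via the prime number theorem with classical error), leaving a concrete complex quantity whose real part is then bounded below algebraically; the factor $\bigl|1+it/(2\sigma-1)\bigr|^{-1}$ appearing there automatically produces the $\min(t^2/(2\sigma-1)^2,1)$ behaviour. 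If you want to keep your integral picture, the fix is to compute $\int_0^\infty(1-\cos(\xi w+\theta))e^{-w}\,dw=1-(\cos\theta+\xi\sin\theta)/(1+\xi^2)$ with $\theta=t\log M$, $\xi=t/(2\sigma-1)$, and take the minimum over $\theta$, rather than asserting the average over each window is bounded below.
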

\begin{proof}
 The first three items are straightforward.  For (4) note
\[ \Re[\phi_0(\sigma) - \phi_0(\sigma + it)] \geq \sum_{M < p < M^2}
\frac{r_\sigma(p)}{p^\sigma} (1 - \cos(t\log p)) \gg t^2 \sum_{M<p<M^2}
\frac{r_\sigma(p)\log^2 p}{p^\sigma} \gg t^2 \phi_2(\sigma).\]

For (5), write
\begin{align*}\Re[\phi_0(\sigma) - \phi_0(\sigma + it)] &\geq
\frac{\lambda}{2\log M} \Re\left[ \sum_{M < p < M^2} \frac{\log
p}{p^{2\sigma}}(1 - p^{-it})\right]\\& \geq \frac{\lambda}{2\log M} \Re\left[
\sum_{M < n < M^2} \frac{\Lambda(n)}{n^{2\sigma}}(1 - n^{-it})\right] +
O(M^{\frac{1}{2} -\sigma})\\
& \geq  \frac{\lambda}{2\log
M}\Re\left[\left(\frac{M^{1-2\sigma}}{2\sigma -
1}-\frac{M^{2-4\sigma}}{2\sigma - 1}\right) - \left(\frac{M^{1-2\sigma
+it}}{2\sigma -1 + it} - \frac{M^{2-4\sigma + 2it}}{2\sigma - 1 +
it}\right)\right]
\\& \qquad\qquad +  O(\phi_1(\sigma)\exp(-\sqrt{\log M})) 
\end{align*}
Now the bracket is
\[ \geq \frac{M^{1-2\sigma}}{2\sigma -1} \left\{1 - \frac{1 +
M^{1-2\sigma}}{1-M^{1-2\sigma}} \left|\frac{1}{1 + \frac{it}{2\sigma -1
}}\right|\right\}\]
from which we deduce that the above is
\[\geq \frac{\phi_1(\sigma)}{2\log M} \left\{\frac{\min(t^2, (\sigma
- \frac{1}{2})^2)}{3(2\sigma - 1)^2} + O(M^{1-2\sigma}) + O(\exp(-\sqrt{\log
M/2}))\right\}\]\[ \gg \phi_0(\sigma) \min\left(\frac{t^2}{(2\sigma -1)^2},
1\right), \qquad \qquad |t| > \frac{1}{2\log M}.\]
\end{proof}

\begin{proof}[Proof of Proposition \ref{sum_eval_theorem_2}]
Apply Lemma \ref{truncation} with $T = \log N \log^2 M$ and $\delta = 2\sigma -
1$ to obtain
\begin{align*}\sum_{n \leq N}r_\sigma(n) &= \frac{N^\sigma
\exp(\phi_0(\sigma))}{\sigma} \Biggl\{\frac{1}{2\pi}\int_{-\delta}^\delta
\exp(it\log N + \phi_0(\sigma + it) - \phi_0(\sigma)) \frac{dt}{1 +
\frac{it}{\sigma}} \\& \qquad\qquad +O(\log^{\frac{-1}{2}}N \log^{-1}M)+
O\left(\log_2 q \exp(-c \phi_0(\sigma)\right) \Biggr\}.\end{align*} Since
$\phi_0(\sigma) \sim \frac{\log N}{\log M}$ while $\phi_2(\sigma) \sim \log N
\log M$, both error terms are permissible.  

Split the remaining integral at $|t| = \phi_3(\sigma)^{\frac{1}{3}} \sim
\log^{\frac{1}{3}}N \log^{\frac{2}{3}} M$ and at $|t| = \frac{1}{2\log M}$. 
In the interval near $t = 0$, Taylor expand $\phi_0(\sigma + it)$ to obtain the
main term of size $(2\pi \phi_2(\sigma))^{-\frac{1}{2}}$.  For
$\phi_3(\sigma)^{\frac{1}{3}} < |t| < \frac{1}{2\log M}$ use the bound (4) of
the previous lemma to obtain an error term.  On the remaining interval
$\frac{1}{2\log M} < |t| < \frac{1}{2\sigma - 1}$ use the bound (5); for $\log N
> C \log_2^3 q \log_3 q$ for a sufficiently large fixed constant $C$ this also
produces an error term.
\end{proof}

\subsubsection{Analysis of implicit parameters, Proof of part C of Theorem
\ref{small_composite_theorem}}
By hypothesis, $\phi_1(\sigma) = \log N$.  By Lemma
\ref{phi_evaluation_smooth_long} we have $\phi_0(\sigma) \sim \frac{\log
N}{\log_2 q}$ and $\phi_2(\sigma) \sim \log N \log_2 q$ so it remains to
determine the quantity $N^\sigma$.  Again by Lemma
\ref{phi_evaluation_smooth_long}
\[\phi_1(\sigma) = (1 + O(M^{-\frac{1}{2}})) \lambda \sum_{M < p < M^2}
\frac{\log p}{p^{2\sigma}} = (1 + O(\log^{-1} M))
\frac{M^{1-\sigma}}{\sqrt{2\sigma(2\sigma - 1)}}.\]
Set $A =(2\sigma - 1) \log
M$. Then $A$ satisfies
\[(1 + O(\log^{-1} M))A e^A = \frac{1}{2\sigma} \frac{\log q \log_2
q}{\log^2 N} = \frac{1}{2\sigma \tau^2}, \] where we have set $\log N = \tau
\sqrt{\log q
\log_2 q}$ with $\tau < \frac{1}{\sqrt{\log_2 q}}$.  
Hence $A = - \log (2 \sigma) + 2\log \tau^{-1} - \log \log \tau^{-1} - \log 2 +
o(1)$ and therefore
\[ N^\sigma = \sqrt{N} \exp\left(u \log \tau^{-1} - \frac{u}{2} \log \log
\tau^{-1} - u \log 2 - \frac{1}{2}u \log \sigma + u + o(u)\right).\]  Since
$e^{\phi_0(\sigma)} = e^{u + o(u)}$ we obtain the final asymptotic
\[\Delta(N,q) \geq N^{\frac{1}{2}}\exp\left(u(\log \tau^{-1} - \frac{1}{2}\log
\log \tau^{-1} - \log 2 - \frac{1}{2}\log \sigma + o(1))\right).\]  

Set $\log N
= (\log q)^{1-\sigma'}$, $\frac{1}{2}<\sigma' < 1$.  Then we have
$\tau^{-1} = \log^{\sigma'-\frac{1}{2}} q
\log_2^{\frac{1}{2}}q$ and so $\log \tau^{-1} = (\sigma' - \frac{1}{2} )
\log_2 q
+ \frac{1}{2}\log_3 q$ and $\log_2 \tau^{-1} = \log_3 q +
\log(\sigma' - \frac{1}{2} -
)$. In particular, $\sigma = \sigma' + o(1)$.  Thus 
\begin{align*}\Delta(N,q) &\geq N^{\sigma'} \exp\left(-u\left(\log 2 -1+
\frac{1}{2}\log(\sigma' - \frac{1}{2}) + \frac{1}{2} \log\sigma'  +
o(1)\right)\right)\\&= \frac{N}{(\log N)^u} \left(\frac{e +
o(1)}{\sqrt{2(2\sigma'-1)(\sigma')}}\right)^u.\end{align*}

\section{The range $\log\log N \sim \frac{1}{2} \log \log q,$ Proof of Theorem
\ref{moderate_theorem}}
In this section we set $\log N = \tau \log^{\frac{1}{2}} q
\log_2^{\frac{1}{2}} q$.  We assume that $\tau^{-1} \leq \log_2^{O(1)} q$ and we
may assume $\tau \ll\log_3 q$ since the case of larger $\tau$ is
contained in Theorem \ref{large_theorem}.  We handle the main case $\Delta(N,q)$
and the dual
case $\Delta(\frac{q}{N},q)$ ($q$ prime) simultaneously.  In the first place
we put  $x = \frac{\phi(q)}{N}$ and in the dual case we choose $x =
\sqrt{\frac{q}{N}} \log^{-1} q$.  

In either case, set $\lambda = \log^{\frac{1}{2}}x \log_2^{\frac{1}{2}} x$ and
define completely multiplicative function $r(n)$ by
\[r(p) = \left\{\begin{array}{lll} \frac{\lambda}{p^{\frac{1}{2}}\log p}&&
\lambda^2 < p < \exp(\log^2 \lambda)\\ 0 && \text{otherwise}\end{array}\right.
.\]

\subsection{Bound for tail of sum of squares, Proof of part A of Theorem
\ref{moderate_theorem}}
We apply Lemma \ref{tail_bound_lemma} with $y_i = \frac{x}{N}$.  Recall $\alpha
= \frac{\log_2^2 y_i}{\log y_i}$.  
\begin{align*}\sum_{\lambda^2 < p < \exp(\log^2 \lambda)} \log p
\frac{r(p)^2}{1-r(p)^2} &\leq (1 + O(\log_2^2 q))\lambda^2 \sum_{\lambda^2 < p <
\exp(\log^2 \lambda)} \frac{1}{p \log p}\\&\leq (1 + O(\log_2^2 q)) \lambda^2
\left[ (2 \log \lambda)^{-1} - \log^{-2}\lambda\right] \\&\leq \log x -
(1-o(1))\frac{\log x \log_3 x}{\log_2 x}.\end{align*}  Since $\log N =
O(\log^{\frac{1}{2} + \epsilon} x)$,  the first condition in Lemma
\ref{tail_bound_lemma} is satisfied. 

Meanwhile, for some $c > 0$,
\[\sum_{\lambda^2 < p < \exp(\log^2 \lambda)}\sum_{k > \frac{\log y}{\log p
\log_2^4 y}} r(p)^{2k}p^{\alpha k} \ll \exp(\log^2 \lambda) (\log \lambda)^{-c
\frac{\log y}{\log_2^6 y}} = o(1)\] so that the second condition is also
satisfied.  Thus by Lemma \ref{tail_bound_lemma}, 
\[\sum_{n \leq \frac{x}{N}} r(n)^2 = (1 + o(1)) \sum_n r(n)^2\] and therefore
\[\Delta(N,q) \geq (1 + o(1)) \sum_{n \leq N} r(n), \qquad \Delta(\frac{q}{N},
q) \gg \frac{\sqrt{q}}{N^2} \sum_{n \leq \frac{N}{2}} n r(n).\]

\subsection{Saddle point asymptotics, Proof of part B of Theorem
\ref{moderate_theorem}}
Define for $\Re(s) > 0$, $R(s) = \prod_p \left(1 - \frac{r(p)}{p^s}\right)^{-1}$
and the logarithm and derivatives
\[\phi_0(s) = \log R(s),  \qquad \phi_j(s) = (-1)^j \frac{d^j}{ds^j}\phi_0(s).\]
We have the following asymptotic expansions of $\sum_{n \leq N} r(n)$, $\sum_{n
\leq N} n r(n)$.
\begin{prp}\label{sum_expansion_3}
 Let $\sigma > 0$ be the unique solution to $\phi_1(\sigma) = \log N$.  We have
\[\sum_{n \leq N}r(n) \sim \frac{N^\sigma e^{\phi_0(\sigma)}}{\sigma \sqrt{2\pi
\phi_2(\sigma)}}, \qquad \sum_{n \leq N} n r(n) \sim \frac{N^{1 +
\sigma}e^{\phi_0(\sigma)}}{(1 + \sigma) \sqrt{2\pi \phi_2(\sigma)}}.\]
\end{prp}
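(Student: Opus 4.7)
The plan is to evaluate both sums by the same saddle point method used for Propositions \ref{sum_evaluation_1} and \ref{sum_eval_theorem_2}. First I would write the two sums as Perron integrals:
\[
\sum_{n \leq N} r(n) = \frac{1}{2\pi i}\int_{(\sigma)} R(s) N^s \frac{ds}{s},
\qquad
\sum_{n \leq N} n r(n) = \frac{N}{2\pi i}\int_{(\sigma)} R(s) N^s \frac{ds}{s+1},
\]
the second by noting that the Dirichlet series of $n r(n)$ is $R(s-1)$ and shifting variables. Applying Lemma \ref{truncation} with some $T = N^{o(1)}$ (say $T = \exp(\phi_2(\sigma)^{1/2})$) truncates the integrals at $|\Im s| \leq T$ at the cost of a permissible error, provided a sufficient lower bound is available for $\Re[\phi_0(\sigma)-\phi_0(\sigma+it)]$ on $\tau < |t| < T$.

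Second, I would establish the required estimates on $\phi_j(\sigma)$, in direct analogy to Lemma \ref{phi_evaluation_smooth_long}. Using the Prime Number Theorem with its standard error term, one shows
\[
\phi_j(\sigma) = (1 + O(\exp(-c\sqrt{\log \lambda}))) \lambda \!\!\sum_{\lambda^2 < p < \exp(\log^2\lambda)} \frac{\log^{j-1}p}{p^{\sigma + 1/2}},
\]
and a partial-summation argument as in Lemma \ref{rel_size_derivs} gives $\phi_j(\sigma) \sim (\log \lambda)^j \phi_0(\sigma) \sim (\log_2 q)^j \phi_0(\sigma)$, so in particular $\phi_2(\sigma) \sim \log N \cdot \log_2 q$ is of the correct order to produce a Gaussian main term. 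The bound $|\phi_3(\sigma+it)| \leq \phi_3(\sigma)$ is immediate by the triangle inequality applied to the Dirichlet series.

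Third, one needs lower bounds on $\Re[\phi_0(\sigma) - \phi_0(\sigma+it)]$ to control the off-saddle contribution. For $|t| \leq (\log \lambda)^{-1}$, expanding $1-\cos(t\log p) \gg t^2\log^2 p$ term-by-term yields $\gg t^2 \phi_2(\sigma)$. For $(\log \lambda)^{-1} < |t| < T$, one extracts the relevant sum $\sum_{\lambda^2 < p < \exp(\log^2\lambda)} p^{-\sigma - 1/2}(1-p^{-it})$ and evaluates it against $-\zeta'/\zeta(\sigma + 1/2 + it)$ via PNT, using the standard zero-free region for $\zeta$ to show this quantity has real part $\gg \phi_0(\sigma)$ times a factor depending on $|t|$. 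I expect this intermediate range to be the main technical obstacle, since the resonator is supported on a wider band of primes than in the earlier sections, so the Mellin-type computations of Lemma \ref{mellin_distort} need to be redone directly rather than appealing to the closed form $\hat f_\sigma$.

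Once these ingredients are in place, splitting the truncated integral at $|t|=\phi_3(\sigma)^{-1/3}$ and at $|t|=(\log\lambda)^{-1}$, and Taylor expanding $\phi_0(\sigma+it)$ to second order on the innermost interval, gives the Gaussian main term
\[
\frac{1}{2\pi i}\int_{(\sigma)} R(s) N^s\frac{ds}{s} \sim \frac{N^\sigma e^{\phi_0(\sigma)}}{\sigma\sqrt{2\pi\phi_2(\sigma)}},
\]
with the two tails contributing admissible error. For the second sum, the identical analysis applies verbatim except that $1/s$ is replaced by $1/(s+1)$, which at the saddle contributes $1/(1+\sigma)$ rather than $1/\sigma$; together with the extra factor of $N$ in front, this yields the claimed $N^{1+\sigma}e^{\phi_0(\sigma)}/((1+\sigma)\sqrt{2\pi\phi_2(\sigma)})$. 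The existence and uniqueness of $\sigma$ solving $\phi_1(\sigma) = \log N$ follows since $\phi_1$ is strictly decreasing from $+\infty$ to $0$ on $(0,\infty)$.
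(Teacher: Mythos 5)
Your overall strategy—Perron integral, Lemma \ref{truncation}, estimates on $\phi_j$, lower bounds on $\Re[\phi_0(\sigma)-\phi_0(\sigma+it)]$ split by the size of $|t|$, Taylor expansion near the saddle—matches the paper's approach, and the treatment of $\sum n r(n)$ via the shifted denominator $1/(s+1)$ is exactly what the paper does in the analogous dual case earlier. However, a few details would need correcting.

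First, the asymptotic $\phi_j(\sigma)\sim(\log\lambda)^j\phi_0(\sigma)$ you propose is \emph{not} correct for this resonator, because of the extra factor $1/\log p$ in $r(p)=\lambda/(\sqrt{p}\log p)$: near $\sigma=\tfrac12$ one finds $\phi_0\asymp\lambda/\log\lambda$, $\phi_1\asymp\lambda\log_2\lambda$, $\phi_2\asymp\lambda\log^2\lambda$, so the successive ratios $\phi_{j+1}/\phi_j$ are not all $\asymp\log\lambda$. The paper avoids this by asserting only the upper bound $\phi_2(\sigma)\ll\log N\log^2\lambda$ (Lemma \ref{phi_j_bounds}, item (2)), which suffices for the saddle point analysis. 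Relatedly, $\phi_1(0^+)$ is finite (the resonator is supported on finitely many primes), so $\phi_1$ does not increase to $+\infty$ as $\sigma\to0^+$; existence of $\sigma$ depends on $\log N$ lying in the range of $\phi_1$, which is where the hypotheses on $N$ enter.

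Second, your proposed truncation height $T=\exp(\phi_2(\sigma)^{1/2})$ is too large: the off-axis decay bound (item (4) of Lemma \ref{phi_j_bounds}) is only established for $|t|<\lambda$, and $\exp(\phi_2(\sigma)^{1/2})$ exceeds $\lambda$. The paper instead takes $T$ of polynomial size (on the order of $\log N\log^2\lambda$), which is large enough to make the $T^{-1/2}$ error beat $\phi_2^{-1/2}$ but small enough to stay inside the range where the decay bound is proved.

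Third, and most substantively, for the intermediate range $(\log\lambda)^{-2}<|t|<\lambda$ the paper does not pass through $-\zeta'/\zeta(\sigma+\tfrac12+it)$ and the zero-free region on the full support of $r$. Instead it uses a simpler device: restrict to the thin band $\lambda^2<p<\lambda^3$, bound $p^{-(\sigma+\tfrac12)}\geq p^{-6/5}$ using the constraint $\sigma<\tfrac35$, and then the resulting sum $\sum_{\lambda^2<p<\lambda^3}(\log p)\,p^{-6/5}(1-p^{-it})$ has a clean explicit lower bound $\gg\lambda^{1/2}\min(t^2,1)$ from a single application of the prime number theorem. This sidesteps the Mellin-transform bookkeeping you correctly flag as a technical obstacle, and also avoids any trouble with the pole of $\zeta'/\zeta$ when $\sigma$ is close to $\tfrac12$. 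You should adopt this restriction-to-a-subrange trick rather than trying to redo the Lemma \ref{mellin_distort}-style computation.
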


The proof of Proposition \ref{sum_eval_theorem_2} is easily adapted to this
case, so we simply record the necessary estimates.
\begin{lemma}\label{phi_j_bounds}
 Uniformly in $\frac{1}{4} < \sigma < \frac{3}{5}$ We have the following
estimates regarding the functions $\phi_j$.
\begin{enumerate}
 \item $|\phi_3(\sigma + it)| \leq \phi_3(\sigma)$
 \item $\phi_2(\sigma) \ll \log N \log^2 \lambda$
 \item For $|t| < \log^{-2}\lambda$, \[\Re[\phi_0(\sigma) - \phi_0(\sigma + it)]
\gg t^2 \phi_2(\sigma).\]
 \item For $\log^{-2}\lambda < |t| <\lambda$, \[\Re[\phi_0(\sigma) -
\phi_0(\sigma + it)] \gg \min(t^2, 1) \lambda^{\frac{1}{2}}.\]
\end{enumerate}
\end{lemma}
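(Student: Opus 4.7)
I would follow the template of Lemmas \ref{phi_0_bound} and \ref{phi_evaluation_smooth_long}, treating the four items in turn. Items (1) and (2) are routine. For (1), expand
\[\phi_3(s) = \sum_p (\log p)^3 \sum_{n\geq 1} n^2 \left(\frac{r(p)}{p^s}\right)^n;\]
at $s = \sigma + it$ each summand has modulus equal to the corresponding term at $s = \sigma$, so the triangle inequality finishes. For (2), the $n \geq 2$ contributions to $\phi_2(\sigma)$ are dominated by $\sum_p r(p)^2 \log^2 p/ p^{2\sigma} = \lambda^2 \sum_p p^{-1-2\sigma}$, which is $O(\lambda)$ since $p > \lambda^2$ throughout, hence $O(\log N)$. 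The main $n = 1$ term is $\sum_p r(p)(\log p)^2/p^\sigma \leq \log^2 \lambda \cdot \phi_1(\sigma) = \log N \log^2 \lambda$, using that $\log p < \log^2 \lambda$ on the support of $r$.

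For (3), I would observe that when $|t| < \log^{-2}\lambda$ every prime in the support of $r$ satisfies $|t \log p| < 1$; hence $1 - \cos(t\log p) \geq (t \log p)^2 / 3$, and substituting into the $n=1$ piece of $\Re[\phi_0(\sigma) - \phi_0(\sigma+it)]$ gives $\gg t^2 \phi_2(\sigma)$, after checking that the $n \geq 2$ contributions are absorbed in the implied constant just as in (2).

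The substantive work is item (4). My plan is to reduce to the first-iterate approximation
\[\Re[\phi_0(\sigma) - \phi_0(\sigma+it)] = \lambda \sum_{\lambda^2 < p < \exp(\log^2 \lambda)} \frac{1 - \cos(t \log p)}{p^{\sigma + 1/2} \log p} + O(\lambda),\]
and then apply partial summation against the prime number theorem to rewrite the sum, up to negligible error, as
\[\lambda \int_{2 \log \lambda}^{\log^2 \lambda} \frac{(1 - \cos(tu))\, e^{(1/2 - \sigma) u}}{u^2}\, du\]
with $u = \log p$. For $|t| > 1$, integration by parts shows that the $\cos(tu)$ contribution is $O(1/(|t| \log \lambda))$, while the main $\int du/u^2$ piece (weighted by $e^{(1/2 - \sigma)u}$) is bounded below by a positive constant times $1/\log \lambda$ uniformly for $\sigma \in (1/4, 3/5)$; multiplying by $\lambda$ yields $\gg \lambda / \log \lambda \gg \lambda^{1/2}$. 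For $\log^{-2}\lambda < |t| < 1$ I would split the integral at $u = 1/|t|$, using the Taylor estimate $1 - \cos(tu) \geq t^2 u^2 / 3$ on the initial piece and the oscillatory estimate on the tail; in both regimes the contribution comfortably exceeds $\min(t^2, 1) \lambda^{1/2}$.

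The principal obstacle is item (4), specifically arranging the Taylor/oscillatory split uniformly in the exponential weight $e^{(1/2 - \sigma)u}$ as $\sigma$ ranges over the whole interval $(1/4, 3/5)$ (the cases $\sigma$ near $1/4$ and near $3/5$ concentrate the integral at opposite endpoints of the $u$-range). Fortunately the target bound $\min(t^2,1)\lambda^{1/2}$ is much weaker than the heuristic true size $\lambda/\log \lambda$ in the worst regime, so there is considerable slack and only routine PNT-type estimates are needed to close the argument.
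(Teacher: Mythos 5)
Your treatment of items (1)--(3) is fine (the paper simply calls these ``straightforward'' and gives no details). The real divergence, and the genuine gaps, are in item (4).

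The paper's argument for (4) first discards the termwise non-negative contributions from $p>\lambda^3$, restricting the prime sum to $\lambda^2<p<\lambda^3$. On that short range one has $\log p\asymp\log\lambda$ and, crucially, the $\sigma$-dependent weight $p^{-\sigma-\frac12}/\log p$ is bounded below by a \emph{fixed} power $\gg\log p/(p^{11/10}\log^2\lambda)$ using only $\sigma<3/5$. After this step all $\sigma$-dependence is gone, and a single PNT evaluation of $\sum_{\lambda^2<p<\lambda^3}\log p\,p^{-11/10-it}$ yields $\gg\frac{\lambda^{3/5}}{\log^2\lambda}\min(t^2,1)\gg\lambda^{1/2}\min(t^2,1)$, uniformly. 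You instead keep the $\sigma$-dependent weight, pass to the integral $\lambda\int_{2\log\lambda}^{\log^2\lambda}\frac{(1-\cos tu)\,e^{(1/2-\sigma)u}}{u^2}\,du$, and try to bound the main and oscillatory pieces separately. This is exactly where your own caveat bites, and several of your intermediate claims are false as stated. First, the assertion that the main integral $\int e^{(1/2-\sigma)u}u^{-2}\,du$ is $\gg 1/\log\lambda$ uniformly in $\sigma\in(1/4,3/5)$ fails for $\sigma$ near $3/5$, where it decays like $\lambda^{1-2\sigma}/((2\sigma-1)\log^2\lambda)\ll\lambda^{-1/5}/\log^2\lambda$. (The final conclusion $\gg\lambda^{1/2}$ still survives because $\lambda\cdot\lambda^{1-2\sigma}/\log^2\lambda\geq\lambda^{4/5}/\log^2\lambda$, but the stated intermediate bound is wrong and you would need to argue differently.) Second, the claim that the $\cos(tu)$ piece is $O(1/(|t|\log\lambda))$ via integration by parts is unjustified for $\sigma<1/2$: the weight $e^{(1/2-\sigma)u}/u^2$ is then rapidly increasing, so both the boundary terms and $\int|g'|$ in the integration by parts are exponentially large in $\log^2\lambda$, not $O(1/\log\lambda)$. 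Third, writing the reduction to the first iterate as an equality ``$+O(\lambda)$'' would be fatal, since the target is the much smaller $\lambda^{1/2}$; what saves you is that the discarded $n\ge2$ terms are non-negative, so you get a genuine one-sided inequality, and you should say this rather than invoke a two-sided error. In short, the paper's truncate-then-freeze-$\sigma$ maneuver is what makes the argument clean; your route can likely be patched (you correctly identified the slack), but the specific PNT-type bounds you sketched do not hold uniformly and would need to be replaced by a more careful case analysis in $\sigma$ and $t$.
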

\begin{proof}
 The first three items are straightforward, so we show the proof of (4).
We have
\begin{align*}\Re[\phi_0(\sigma) - \phi_0(\sigma + it)] &\geq \sum_{\lambda^2 <
p < \exp(\log^2 \lambda)} \frac{\lambda}{p^{2\sigma}\log p}\left[1 - \cos(t\log
p)\right]\\
& \geq \Re\left\{\frac{\lambda}{3\log^2 \lambda} \sum_{\lambda^2 < p <
\lambda^3} \frac{\log p}{p^{\frac{6}{5}}} \left[1 -
p^{-it}\right]\right\}\\& \gg \frac{\lambda^{\frac{3}{5}}}{\log^2 \lambda}
\left[1 - \left|\frac{1}{5} + it\right|^{-1}\right] \gg \lambda^{\frac{1}{2}}
\min(t^2, 1) .
\end{align*}
\end{proof}

We point out one simple consequence of Proposition \ref{sum_expansion_3}.  
\begin{cor}
We
have
\[\sum_{n \leq \frac{N}{2}} n r(n) \asymp \sum_{n \leq N} n r(n).\]  
\end{cor}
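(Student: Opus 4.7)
The plan is to apply Proposition \ref{sum_expansion_3} to both sums and show that the two implicitly defined saddle points are so close that the resulting asymptotic expressions differ only by a bounded multiplicative factor. Let $\sigma$ be defined by $\phi_1(\sigma) = \log N$, as in the statement of Proposition \ref{sum_expansion_3} applied to $\sum_{n \leq N} n r(n)$, and let $\sigma'$ be the corresponding saddle for $\sum_{n \leq N/2} n r(n)$, solving $\phi_1(\sigma') = \log N - \log 2$. Since $\phi_1$ is strictly decreasing in its argument we have $\sigma' > \sigma$, and by the mean value theorem $\phi_2(\xi)(\sigma' - \sigma) = \log 2$ for some $\xi$ between $\sigma$ and $\sigma'$.

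From Lemma \ref{phi_j_bounds} (item 2) one has $\phi_2(\xi) \asymp \log N \log^2 \lambda$ uniformly on the relevant range of the saddle parameter, hence $|\sigma' - \sigma| = O((\log N \log^2 \lambda)^{-1})$. This allows us to control each factor in the ratio of saddle point asymptotics:
\[
\frac{\sum_{n \leq N/2} n r(n)}{\sum_{n \leq N} n r(n)} \sim
\frac{(N/2)^{1+\sigma'}}{N^{1+\sigma}} \cdot \frac{1+\sigma}{1+\sigma'} \cdot e^{\phi_0(\sigma') - \phi_0(\sigma)} \cdot \sqrt{\frac{\phi_2(\sigma)}{\phi_2(\sigma')}}.
\]
Indeed $N^{\sigma' - \sigma} = \exp(O(\log^{-2}\lambda)) = 1 + o(1)$, so the prefactor is $2^{-(1+\sigma')}(1 + o(1))$, bounded away from zero. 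The Taylor expansion $\phi_0(\sigma') - \phi_0(\sigma) = -\phi_1(\eta)(\sigma' - \sigma) = O(\log^{-2} \lambda) = o(1)$ handles the exponential, and applying the mean value theorem once more to $\phi_2$ gives $\phi_2(\sigma')/\phi_2(\sigma) = 1 + o(1)$; the factor $(1+\sigma)/(1+\sigma')$ is likewise $1 + o(1)$.

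Combining these estimates yields the lower bound $\sum_{n \leq N/2} n r(n) \gg \sum_{n \leq N} n r(n)$, with implied constant $2^{-(1+\sigma')} + o(1)$ which is bounded below uniformly (since $\sigma$ lies in the range $(1/4, 3/5)$ where the estimates of Lemma \ref{phi_j_bounds} are valid). The matching upper bound is immediate from positivity of the summands, giving the claimed $\asymp$. No real obstacle arises here; the only point requiring care is to verify that both $\sigma$ and $\sigma'$ stay in the uniformity range of Proposition \ref{sum_expansion_3} and Lemma \ref{phi_j_bounds}, which is immediate from the fact that their difference is $o(1)$ and $\sigma$ itself lies comfortably in the interior of that range for the values of $\tau$ under consideration.
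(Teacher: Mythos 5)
Your overall strategy — introduce the second saddle point $\sigma'$ solving $\phi_1(\sigma') = \log N - \log 2$, bound $\sigma' - \sigma$ via the mean value theorem, and compare the two saddle-point asymptotics factor by factor — is essentially the same as the paper's. However, there is a real flaw in how you control $\sigma' - \sigma$. You cite Lemma \ref{phi_j_bounds}(2) as giving $\phi_2(\xi) \asymp \log N \log^2\lambda$, but that item only gives the \emph{upper} bound $\phi_2(\sigma) \ll \log N \log^2\lambda$. To conclude $|\sigma' - \sigma| = O\bigl((\log N \log^2\lambda)^{-1}\bigr)$ you need a \emph{lower} bound on $\phi_2(\xi)$, and the bound $\phi_2 \gg \log N \log^2\lambda$ is in fact false in general here: since $\phi_2/\phi_1$ is a weighted average of $\log p$ over the support $\lambda^2 < p < \exp(\log^2\lambda)$ with weights $r(p)p^{-\sigma}$, and these weights concentrate near $p \approx \lambda^2$ once $\sigma$ is bounded away from $\tfrac12$, the ratio can be as small as $\asymp \log\lambda$.

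Consequently your intermediate claims that $N^{\sigma'-\sigma} = 1 + o(1)$ and $\phi_0(\sigma') - \phi_0(\sigma) = o(1)$ do not follow. The paper instead uses the much weaker (but valid) lower bound $\phi_2(\gamma') \geq \phi_2(\sigma') \geq \phi_1(\sigma') \sim \log N$, which gives only $\sigma' - \sigma = O(1/\log N)$, whence $(\sigma'-\sigma)\log N = O(1)$ and $\phi_0(\sigma) - \phi_0(\sigma') = (\sigma'-\sigma)\phi_1(\gamma) = O(1)$. These $O(1)$ factors are still enough for the $\asymp$ conclusion, so your final answer is correct, but the justification as written is not: you should replace the claimed two-sided estimate for $\phi_2$ with the inequality $\phi_2 \geq \phi_1$ (which holds since $\log p \geq 1$ on the support of $r$) and accept bounded rather than $1+o(1)$ factors throughout.
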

\begin{proof}
Let $\sigma'$ solve $\phi_1(\sigma') = \log N - \log 2$.  It evidently suffices
to check that $\phi_0(\sigma) - \phi_0(\sigma') = O(1)$.  But by the Mean Value
Theorem, \[\phi_0(\sigma) - \phi_0(\sigma') = (\sigma' - \sigma)
\phi_1(\gamma)\] for some $\gamma \in [\sigma, \sigma']$ and \[\sigma' -
\sigma = \frac{\phi_1(\sigma) - \phi_1(\sigma')}{\phi_2(\gamma')} = \frac{\log
2}{\phi_2(\gamma')}\] for some $\gamma' \in [\sigma, \sigma']$.  The claim now
follows because $\phi_2(\gamma') \geq \phi_2(\sigma') \geq \phi_1(\sigma')
\sim \phi_1(\gamma) \sim \log N$.
\end{proof}

\subsection{Evaluation of parameters, Proof of part C of Theorem
\ref{moderate_theorem}}
By (2) of Lemma \ref{phi_j_bounds}, $\phi_2(\sigma) = \log^{O(1)} q$, and
therefore Proposition \ref{sum_expansion_3} and its Corollary imply that for
$\sigma > 0$ solving $\phi_1(\sigma) = \log N$ we have
\[\Delta(N,q) = N^\sigma \exp\left(\phi_0(\sigma) + O(\log_2 q)\right)\]
and for $q$ prime,
\[\Delta(\frac{q}{N}, q) \gg \frac{\sqrt{q}}{N} N^\sigma
\exp\left(\phi_0(\sigma) + O(\log_2 q)\right).\] The error term will be
negligible, so  it suffices to determine $N^\sigma$ and
$\phi_0(\sigma)$.

We first show that for
$N$ such that $\sigma < \frac{1}{2} + \frac{1}{\log_2 q \log_3^2 q}$, 
\begin{align} \label{near_one_half}\Delta(N,q) &\geq \sqrt{N}
\exp\left((1 + o(1)) \sqrt{\frac{\log
q}{\log_2 q}}\right),\\
\notag \Delta(\frac{q}{N}, q) &\geq \sqrt{\frac{q}{N}}
\exp\left((1 + o(1)) \sqrt{\frac{\log q}{2\log_2 q}}\right), \qquad (q \text{
prime}).\end{align} 
Note that for $\sigma = \frac{1}{2} + \frac{1}{\log_2 q \log_3^2 q}$, 
\[\log N = \phi_1(\sigma) \asymp \lambda \sum_{\lambda^2 < p < \exp(\log^2
\lambda)} \frac{1}{p^{1 + \log_2^{-1}q\log_3^{-2}q}} \gg \sqrt{\log q\log_2
q}\log_4 q\] so that $\tau \gg \log_4 q$.  Since $A \tau \to 0$ and $A\tau'
\to 1$ as $\tau \to \infty$, the bounds (\ref{near_one_half}) verify the
theorem in this range.

When $\sigma < \frac{1}{2}$, since $\log N =
\phi_1(\sigma) \ll \sqrt{\log q \log_2 q}\log_3 q$ and $\phi_2(\alpha)$ is
decreasing in $\alpha > 0$ we have
\[\frac{1}{2} - \sigma \ll \frac{\sqrt{\log q \log_2 q}\log_3
q}{\phi_2(\frac{1}{2})}.\]  Meanwhile
\[\phi_2(\frac{1}{2}) \geq \sum_{\lambda^2 < p < \exp(\log^2 \lambda)}
\frac{\lambda \log p}{p} \gg \lambda \log^2 \lambda.\]  It follows that
$\frac{1}{2} - \sigma \ll \frac{\log_3 q}{\log_2^2 q}$ and therefore
\[(\frac{1}{2} - \sigma) \log N \ll \frac{\log_3 q}{\log_2 q} \sqrt{\frac{\log
x}{\log_2 x}}.\]  Meanwhile,
\[\phi_0(\sigma) \geq \phi_0(\frac{1}{2}) \geq \lambda \sum_{\lambda^2 \leq p <
\exp(\log^2 \lambda)} \frac{1}{p\log p} \geq (1 + o(1))\frac{\lambda}{2\log
\lambda} = (1 + o(1))\sqrt{\frac{\log x}{\log_2 x}}.\] Combining these
estimates we obtain (\ref{near_one_half}) for the case $\sigma < \frac{1}{2}$.  

In the range $\frac{1}{2} \leq \sigma < \frac{1}{2} + \log_2^{-1} q \log_3^{-2}
q$ we have
$\phi_0(\frac{1}{2}) - \phi_0(\sigma) \leq (\sigma - \frac{1}{2})
\phi_1(\frac{1}{2}).$  Now
\[\phi_1(\frac{1}{2})  \sim \lambda \sum_{\lambda^2 < p < \exp(\log^2 \lambda)}
\frac{1}{p} = O(\lambda \log_2 \lambda)\] and therefore
\[\phi_0(\frac{1}{2})- \phi_0(\sigma) \ll \frac{1}{\log_3 q} \sqrt{\frac{\log
x}{\log_2 x}}.\]  Thus we also have (\ref{near_one_half}) for $\sigma <
\frac{1}{2} + \frac{1}{\log_2 q \log_3^2 q}$.

We now consider the case $\log_2^{-1} q \log_3^{-2} q < \sigma - \frac{1}{2} <
\log_2^{-1}q \log_3^2 q.$  In this range we have
\[\log N = \phi_1(\sigma) = (1 + O(\lambda^{-1})) \lambda \sum_{\lambda^2 < p <
\exp(\log^2 \lambda)} \frac{1}{p^{\frac{1}{2} + \sigma}}.\]  By the prime
number theorem and partial summation, this is
\[(1 + O(\exp(-\sqrt{\log \lambda}))) \int_{2(\sigma - \frac{1}{2}) \log
\lambda}^{(\sigma - \frac{1}{2}) \log^2 \lambda} e^{-x} \frac{dx}{x} = (1 +
O(\exp(-\sqrt{\log \lambda}))) \lambda \int_{(2\sigma - 1)\log \lambda}^\infty
e^{-x} \frac{dx}{x}.\]  Since $\tau \sim \frac{\log N}{\lambda}$ in the
case of $\Delta(N,q)$ and $\tau \sim \frac{\log N}{\sqrt{2}\lambda}$ in
the dual case of $\Delta(\frac{N}{q},q)$, the range $\log_2^{-1} q \log_3^{-2} q
< \sigma - \frac{1}{2} <
\log_2^{-1}q \log_3^2 q$ corresponds to $\tau$ within the range $e^{-O(\log_3^2
q)}\leq \tau \ll \log_4 q $, so the range of $\sigma$
considered is sufficient for the theorem.

Let $A$ solve $\frac{\log N}{\lambda} = \int_A^\infty e^{-x} \frac{dx}{x}$ and
let $A' = (2\sigma - 1)\log \lambda$.  It is readily seen that $|A-A'| \leq
\exp(-\frac{1}{2}\sqrt{\log \lambda})$ and therefore 
\[\sigma \log N = \frac{1}{2} \log N + \frac{A\log N}{2\log \lambda} + O(\log N
\exp(-\frac{1}{2} \sqrt{\log \lambda})).\]
Meanwhile, applying the prime number theorem a second time
\[\phi_0(\sigma) = (1 + o(1)) \lambda \sum_{\lambda^2 < p < \exp(\log^2
\lambda)} \frac{1}{p^{\frac{1}{2} + \sigma} \log p} = (1 + o(1))\lambda
\int_{2\log \lambda}^{\log^2 \lambda} e^{(\frac{1}{2} -
\sigma)y}\frac{dy}{y^2}\] 
and after a change of variables, this is
\[(1 + o(1))(\sigma - \frac{1}{2}) \lambda \int_{2(\sigma - \frac{1}{2}) \log
\lambda}^\infty e^{-x} \frac{dx}{x^2} = (1 + o(1))(\sigma -
\frac{1}{2})\lambda \int_A^\infty e^{-x} \frac{dx}{x^2}.\]
Since $A = (1 +
o(1)) (\sigma - \frac{1}{2})2 \log \lambda$ and recalling that we set $\tau'=
\int_A^\infty e^{-x} \frac{dx}{x^2}$ we obtain $\phi_0(\sigma) = (1 + o(1))
\frac{A \tau' \lambda}{2\log \lambda}$.  

Thus
\[N^\sigma e^{\phi_0(\sigma)} = N^{\frac{1}{2}}\exp\left((1 + o(1))\left(\frac{A
\log N}{2 \log \lambda} + \frac{A \tau'\lambda}{ 2\log
\lambda}\right)\right),\] and therefore,
\begin{align*} \Delta(N,q) &\geq \sqrt{N} \exp\left((1 + o(1)) \left(A(\tau +
\tau') \sqrt{\frac{\log q}{\log\log q}}\right) \right)\\
\Delta(\frac{q}{N},q) &\gg \sqrt{\frac{q}{N}} \exp\left((1 + o(1))\left(A
\left(\tau  + \frac{\tau'}{\sqrt{2}}\right) \sqrt{\frac{\log q}{\log_2
q}}\right)\right), \qquad (q \text{ prime}).\end{align*}

\section{Long character sums, Theorem \ref{large_theorem}}
When $\log N \geq 4\sqrt{\log q \log_2 q}\log_3 q$ we use a `second moment'
version of the resonance method, which avoids saddle point analysis.  The
situation now becomes similar to that in the original paper \cite{sound_resonance}.

The second moment version of the Fundamental Proposition is as follows.
\begin{prp}[Fundamental Proposition, second moment version]\label{fund_second}
 Let $\log^4 q < N < q $ and set $x =
\frac{q}{N}$. Let $r(n)$ be a non-negative multiplicative function supported on
squarefree $n$ and such that $p|q \Rightarrow r(p) = 0$.  Then for any
parameter $z < \frac{N}{\log^3 q}$ we have
\[\Delta(N,q)^2+O\left(\frac{\phi(q)}{q} N\right)  \gtrsim \frac{\phi(q)}{q} N
\sum_{\substack{n_1, n_2 \leq z\\ (n_1, n_2) = 1}}
\frac{r(n_1)r(n_2)}{\max(n_1,n_2)} \left[\sum_{\substack{g \leq
\frac{x}{\max(n_1,n_2)}\\ (g,n_1n_2) = 1}}r(g)^2 \bigg/ \prod_p\left(1 +
r(p)^2\right)\right] .\]
Moreover, let $M$ be minimal such that $\sum_{p \leq M}\log p > \log q$.  The
conclusion remains valid if the condition $p|q  \Rightarrow r(p) = 0$ is
replaced with $p \leq M \Rightarrow r(p) = 0$.
\end{prp}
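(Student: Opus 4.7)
The plan is to apply a second-moment version of the resonance argument. Set
\[
R(\chi) = \sum_{n \leq x} r(n) \chi(n), \qquad S(\chi) = \sum_{m \leq N} \chi(m),
\]
and compare $\sum_\chi |R(\chi)|^2 |S(\chi)|^2$ with $\sum_\chi |R(\chi)|^2$, treating the principal character separately. Orthogonality gives the denominator exactly as $\phi(q) \sum_{n \leq x} r(n)^2$, and since $r$ is non-negative and squarefree-supported, $\sum_{n \leq x} r(n)^2 \leq \prod_p(1+r(p)^2)$; replacing the denominator by this upper bound only weakens the inequality, producing the $\prod_p(1+r(p)^2)$ appearing in the statement.

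Expanding the numerator by orthogonality yields
\[
\sum_\chi |R(\chi)|^2 |S(\chi)|^2 = \phi(q) \sum_{\substack{n_i \leq x,\, m_i \leq N \\ n_1 m_1 \equiv n_2 m_2 \,(\bmod\, q) \\ (n_1 n_2 m_1 m_2, q) = 1}} r(n_1) r(n_2).
\]
Since $n_i m_i \leq xN = q$ and coprimality to $q$ excludes $n_i m_i = q$, each product is strictly less than $q$, so the congruence collapses to the exact equation $n_1 m_1 = n_2 m_2$ with \emph{no off-diagonal contribution}. Writing $a = (n_1, n_2)$ and $n_i = a d_i$ with $(d_1, d_2) = 1$, the squarefree-supported multiplicativity gives $r(n_1) r(n_2) = r(a)^2 r(d_1) r(d_2)$, while the equation forces $m_1 = d_2 k$, $m_2 = d_1 k$ with $k \leq N/\max(d_1, d_2)$ and $(k, q) = 1$.

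Next I would sum over $k$ using the standard estimate $\#\{k \leq Y : (k,q) = 1\} = Y\phi(q)/q + O(2^{\omega(q)})$, swap the orders of summation (valid since all terms are non-negative), and truncate the $d_1, d_2$-sum to $d_i \leq z$ (discarding non-negative terms); after renaming $a$ to $g$ this produces exactly the shape of the claimed lower bound. The hypothesis $z < N/\log^3 q$ guarantees $N/\max(d_1, d_2) \geq \log^3 q$ throughout, which is what allows the divisor-type error from the $k$-count to be absorbed into the $O(\phi(q) N/q)$ slack on the left-hand side. Finally, the principal character contributes $|R(\chi_0)|^2 |S(\chi_0)|^2 / (\phi(q)\prod_p(1+r(p)^2))$, and Cauchy--Schwarz bounds this by $x\bigl(\phi(q) N/q\bigr)^2 / \phi(q) = \phi(q) N/q$, matching the allowed error term exactly.

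To upgrade the hypothesis $p | q \Rightarrow r(p) = 0$ to the weaker $p \leq M \Rightarrow r(p) = 0$, I would repeat verbatim the swap argument from the end of the proof of Proposition~\ref{fundamental_prop}: pair the primes $p \leq M$ with $p \nmid q$ (at least as numerous, by the choice of $M$) against the primes $p > M$ with $p | q$, exchange the values of $r$ on these two sets to obtain a multiplicative function $\tilde r$ supported on integers coprime to $q$ with $\sum_n \tilde r(n)^2 = \sum_n r(n)^2$ and partial sums no smaller than those of $r$; applying the first half to $\tilde r$ then yields the claim for $r$. The main technical obstacle I anticipate is the uniform bookkeeping of errors when $\omega(q)$ is large (so $q$ is highly composite), and indeed this is precisely why the cutoff $z < N/\log^3 q$ is imposed, rather than a more generous truncation.
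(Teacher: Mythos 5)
Your proposal follows the paper's proof essentially step for step: resonate with $\sum_\chi |R(\chi)|^2\left|\sum_{n\leq N}\chi(n)\right|^2$ against $\sum_\chi|R(\chi)|^2$, bound the denominator by $\prod_p(1+r(p)^2)$, expand the numerator via orthogonality, observe that the congruence $\bmod\, q$ collapses to equality because $m_in_i\leq xN=q$ with coprimality to $q$, parametrize by the GCD $g=(m_1,m_2)$, bound the $\chi_0$ contribution by Cauchy--Schwarz exactly as you do, and for the weaker hypothesis $p\leq M\Rightarrow r(p)=0$ re-use verbatim the prime-swap trick from Proposition \ref{fundamental_prop}.

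The one place your wording is off is the treatment of the innermost count $\#\{k\leq Y:(k,q)=1\}$. You propose using the inclusion-exclusion estimate with error $O(2^{\omega(q)})$ and absorbing it into the $O(\phi(q)N/q)$ slack, but that cannot work: for highly composite $q$ the error $2^{\omega(q)}$ can be of size $\exp(c\log q/\log\log q)$, and since it appears once for each pair $(m_1,m_2,g)$, its total contribution is multiplied by the (potentially exponentially large) resonator sum and dwarfs the modest $O(\phi(q)N/q)$ term. What the hypothesis $z<N/\log^3 q$ actually buys is the \emph{pointwise} lower bound $\#\{k\leq Y:(k,q)=1\}\gtrsim Y\phi(q)/q$ whenever $Y>\log^3 q$, which is a sieve-theoretic fact (primes $p\mid q$ with $p>Y$ sieve out nothing, and for the remaining primes one uses that $\log^3 q$ dominates the largest prime one ever needs to sieve by when $\sum_{p\leq M}\log p>\log q$). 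This is how the paper passes from the $h$-sum to $\frac{\phi(q)}{q}N/\max(m_1,m_2)$; the slack term $O(\phi(q)N/q)$ is reserved solely for the principal character, not for the $k$-count. That said, this is a localized misidentification of mechanism, not a structural flaw --- with that one step repaired, your argument is the paper's argument.
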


\begin{proof}
Define 'resonator' $R(\chi) = \frac{1}{\sqrt{\phi(q)}}\sum_{n \leq x} r(n)
\chi(n)$.  Plainly
\begin{equation}\label{basic_ratio}\Delta(n,q)^2 \geq \sum_{ \chi \neq \chi_0}
|R(\chi)|^2 \left|\sum_{n \leq N} \chi(n)\right|^2 \Bigg/ \sum_{\chi}
|R(\chi)|^2.\end{equation}
By orthogonality of characters, the denominator is $\sum_{n \leq x} r(n)^2 \leq
\prod_p (1 + r(p)^2).$
Meanwhile, the numerator is
\[\sum_{\substack{m_1,m_2 \leq x}}r(m_1)r(m_2) \sum_{\substack{ n_1, n_2 \leq
N\\ (n_1n_2, q) = 1\\ m_1n_1 = m_2n_2}}  - |R(\chi_0)|^2\left|\sum_{n
\leq N} \chi_0(n)\right|^2.\]
By Cauchy-Schwartz, $|R(\chi_0)|^2 \leq \frac{x}{\phi(q)} \sum_{m \leq x}
r(m)^2$ and since $N > \log^4 q$, $\sum_{n \leq N} \chi_0(n) \sim
\frac{\phi(q)}{q} N.$  Therefore the negative term contributes
$O(\frac{\phi(q)}{q} N)$ to the ratio (\ref{basic_ratio}).  In the main term,
let $g = (m_1,m_2)$, $h = (n_1,n_2)$ and replace $m_i := \frac{m_i}{g}$ to
obtain
\[ \sum_{\substack{m_1, m_2 \leq x \\ (m_1,m_2) = 1}} r(m_1)r(m_2)
\sum_{\substack{g \leq \frac{x}{\max(m_1,m_2)}\\ (g,m_1m_2) = 1}} r(g)^2
\sum_{\substack{ h < \frac{N}{\max(m_1,m_2)}\\ (h,q) = 1}} 1.\] Discarding some
non-negative terms, this is
\[ 
\gtrsim \frac{\phi(q)}{q} N \sum_{\substack{m_1, m_2 \leq z \\ (m_1,m_2) = 1}}
\frac{r(m_1)r(m_2)}{\max(m_1,m_2)}
\sum_{\substack{g \leq \frac{x}{\max(m_1,m_2)}\\ (g,m_1m_2) = 1}} r(g)^2, \]
which proves the first part of the Proposition.  

For the second statement, let $r$ be any non-negative multiplicative function
supported on squarefree numbers and satisfying $p \leq M \Rightarrow r(p) = 0$. 
Enumerate $\{q_1, ..., q_r\}$ the set of primes greater than $M$ that divide
$q$, and $\{p_1, ..., p_s\}$ the set of primes at most $M$ that do
not divide $q$.  Then $s \geq r$ so we may define a new multiplicative function
$\tilde{r}$, supported on squarefrees and satisfying $p|q \Rightarrow
\tilde{r}(p) = 0$, by exchanging the values of $r(p_i)$ and $r(q_i)$ for $1
\leq i \leq r$.  Evidently 
\[\prod_{p} (1 + \tilde{r}(p)^2) = \prod_p (1 + r(p)^2)\]
and also, for any $z$,
\[\sum_{\substack{m_1, m_2 \leq z\\ (m_1,m_2) = 1}}
\frac{r(m_1)r(m_2)}{\max(m_1,m_2)} \sum_{\substack{g <
\frac{x}{\max(m_1,m_2)}\\ (g,m_1m_2) = 1}} r(g)^2 \leq  \sum_{\substack{m_1, m_2
\leq z\\ (m_1,m_2) = 1}}
\frac{\tilde{r}(m_1)\tilde{r}(m_2)}{\max(m_1,m_2)} \sum_{\substack{g <
\frac{x}{\max(m_1,m_2)}\\ (g,m_1m_2) = 1}} \tilde{r}(g)^2,\] which reduces the
second statement in the Proposition to the first case. 
\end{proof}

In the case $q$ is prime we also have a dual version of the Second Moment
Propostion.  Recall that we set
\[S(\chi) = \sum_{|h| \leq H, h \neq 0} \frac{\chi(h)}{h} (1 - c(\frac{h}{N})),
\qquad \qquad H = \sqrt{Nq}\log q,\]
and from P\'{o}lya's Fourier expansion, 
\[\left|\sum_{n \leq N} \chi(n)\right| \geq \frac{\sqrt{q}}{2\pi} |S(\chi)| +
O\left(\sqrt{\frac{q}{N}}\right),\] also $\chi(1) = 1 \Rightarrow S(\chi) = 0$
so that, in particular, $S(\chi_0) = 0$.

\begin{prp}[Fundamental Proposition, dual second moment version]\label{fund_second_dual}
Let $q$ be prime,  $N \leq \frac{q}{\log^2 q}$ and set $x =
\frac{1}{2\log q}\sqrt{\frac{q}{N}}$.  Let $r(n)$ be a multiplicative function
supported on squarefree numbers not divisible by $q$.  Then
\[\sup_{\chi \neq \chi_0} |S(\chi)|^2 \gg \frac{1}{N} \sum_{\substack{m_1, m_2
\leq \min(x, \frac{N}{2}) \\ (m_1,m_2) = 1}} \frac{r(m_1)r(m_2)
m_1m_2}{\max(m_1,m_2)^3} \sum_{\substack{g \leq \frac{x}{\max(m_1,m_2)}\\
(g,m_1m_2) = 1}} r(g)^2 \bigg/ \prod_p (1 + r(p)^2).\] 
\end{prp}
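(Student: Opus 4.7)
The plan is to follow the template of Proposition \ref{fund_second}, substituting the smoothed Fourier sum $S(\chi)$ for $\sum_{n \leq N}\chi(n)$ and handling the extra pair of Fourier frequencies by reducing a modular congruence to an exact equation. Introduce the resonator $R(\chi) = \frac{1}{\sqrt{q-1}}\sum_{n \leq x}r(n)\chi(n)$ and start from the basic inequality
\[\sup_{\chi \neq \chi_0}|S(\chi)|^2 \geq \frac{\sum_{\chi}|R(\chi)|^2 |S(\chi)|^2}{\sum_{\chi}|R(\chi)|^2},\]
noting that the $\chi = \chi_0$ term in the numerator vanishes because $S(\chi_0) = 0$. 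The denominator equals $\sum_{n \leq x}r(n)^2 \leq \prod_p (1 + r(p)^2)$, since $r$ is supported on squarefrees.

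Next, expand $|R(\chi)|^2 |S(\chi)|^2$ as a quadruple sum over $m_1, m_2 \leq x$ and $0 < |h_1|, |h_2| \leq H$, and apply character orthogonality; after cancellation of the prefactor $(q-1)$ the numerator becomes
\[\sum_{\substack{m_1, m_2 \leq x \\ 0 < |h_1|, |h_2| \leq H \\ m_1 h_2 \equiv m_2 h_1 \bmod q}} r(m_1) r(m_2) \frac{(1 - c(h_1/N))(1 - c(h_2/N))}{h_1 h_2}.\]
Because $xH = q/2$, we have $|m_1 h_2 - m_2 h_1| \leq q$, so up to a boundary of finitely many tuples the congruence collapses to the equation $m_1 h_2 = m_2 h_1$. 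Writing $g = (m_1, m_2)$ and $m_i = g m_i'$ with $(m_1', m_2') = 1$, the equation forces $h_i = m_i' k$ for a common nonzero integer $k$. Using multiplicativity of $r$ on squarefrees and relabelling $m_i' \mapsto m_i$, the numerator rearranges to
\[\sum_{\substack{m_1, m_2 \\ (m_1, m_2) = 1}} r(m_1) r(m_2) \sum_{\substack{g \leq x/\max(m_1,m_2) \\ (g, m_1 m_2) = 1}} r(g)^2 \cdot \Sigma(m_1, m_2),\]
where
\[\Sigma(m_1, m_2) = \sum_{\substack{k \neq 0 \\ |k| \leq H/\max(m_1, m_2)}} \frac{(1 - c(m_1 k/N))(1 - c(m_2 k/N))}{m_1 m_2 k^2}.\]

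The decisive step is a lower bound for $\Sigma(m_1, m_2)$. Using $1 - c(\theta) \gg \theta^2$ for $|\theta| \leq 1/4$, restrict the summation to $|k| \leq N/(4\max(m_1, m_2))$; this interval is contained in the original range since $H \geq N$. The summand is then $\gg m_1 m_2 k^2/N^4$, and summing $k^2$ up to $\sim N/\max(m_1, m_2)$ yields
\[\Sigma(m_1, m_2) \gg \frac{m_1 m_2}{N \max(m_1, m_2)^3},\]
valid precisely when the $k$-interval is nonempty, i.e.\ when $\max(m_1, m_2) \leq N/2$. This is exactly where the cutoff $\min(x, N/2)$ enters the proposition. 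Dividing the resulting numerator by $\prod_p(1 + r(p)^2)$ delivers the stated bound.

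The step I expect to require most care is the reduction from congruence to equality: the choice $xH = q/2$ is sharp, so one must confirm that the tuples with $m_1 h_2 - m_2 h_1 = \pm q$ force the extremal values $m_1 h_2 = m_2 h_1 = \pm q/2$ (which pins the four variables to essentially finitely many configurations) and contribute only an admissible error. All other calculations — the character orthogonality, the greatest-common-divisor rearrangement, and the $k$-sum lower bound — are routine once the equation $m_1 h_2 = m_2 h_1$ is in hand.
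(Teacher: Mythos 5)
Your argument matches the paper's proof essentially step for step: same resonator, same orthogonality computation, same reduction of the congruence to the equation $m_1 h_2 = m_2 h_1$ via $xH = q/2$, same gcd rearrangement, and the same lower bound from summing $k^2$ (the paper organizes this as first discarding terms with $\max(n_1,n_2) > N/2$ and then introducing $g$, but the two orderings give identical estimates). The boundary worry in your last paragraph is moot: since $q$ is an odd prime, $q/2$ is not an integer, so $|m_1 h_2|, |m_2 h_1| \leq (q-1)/2$ and hence $|m_1 h_2 - m_2 h_1| < q$, forcing exact equality with no exceptional tuples.
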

\begin{proof}
 Set $R(\chi) = \frac{1}{\sqrt{\phi(q)}} \sum_{m < x }r(m)\chi(m)$.  Plainly
\[\sup_{\chi\neq \chi_0} |S(\chi)|^2 \geq \sum_{\chi} |R(\chi)S(\chi)|^2 \bigg/
\sum_{\chi} |R(\chi)|^2.\]  The denominator is bounded by $\prod_p (1 +
r(p)^2)$ while the numerator is equal to
\[\sum_{m_1,m_2 \leq x} r(m_1)r(m_2) \sum_{\substack{0 \neq |n_1|,|n_2| \leq
H\\ m_1n_2 \equiv m_2n_1 \bmod q}} \frac{(1 -
c(\frac{n_1}{N}))(1-c(\frac{n_2}{N}))}{n_1n_2}.\] The congruence modulo $q$ is
possible only if $n_1$ and $n_2$ have the sign.  Discarding those (positive)
terms with $\max(n_1, n_2) > \frac{N}{2}$, the numerator is
\begin{align*}\gg& N^{-4} \sum_{m_1, m_2 \leq x} r(m_1)r(m_2) \sum_{\substack{0
< n_1,n_2 \leq \frac{N}{2} \\ n_1m_2 = n_2m_1}} n_1n_2 \\&=
N^{-4}\sum_{\substack{m_1,m_2 \leq x\\ (m_1,m_2) = 1}} r(m_1)r(m_2)m_1m_2
\sum_{\substack{g\leq \frac{x}{\max(m_1,m_2)}\\ (g, m_1m_2) = 1}} r(g)^2 \sum_{
h \leq \frac{N}{2 \max(m_1,m_2)}} h^2
\\& \gg \frac{1}{N} \sum_{\substack{m_1, m_2
\leq \min(x, \frac{N}{2}) \\ (m_1,m_2) = 1}} \frac{r(m_1)r(m_2)
m_1m_2}{\max(m_1,m_2)^3} \sum_{\substack{g \leq \frac{x}{\max(m_1,m_2)}\\
(g,m_1m_2) = 1}} r(g)^2.\end{align*}

\end{proof}

\subsection{Choice of resonator, and some lemmas}
Let either $x =\frac{q}{N}$ in the main case of $\Delta(N,q)$ or $x = \frac{1}{
2\log q} \sqrt{\frac{q}{N}}$ in the dual case of $\Delta(\frac{N}{q},q)$.  In
either case, set $\lambda = \sqrt{\log x \log \log x}$ and as in
\cite{sound_resonance}, define
multiplicative function $r(n)$ at prime powers by
\begin{align*} &r(p) = \left\{\begin{array}{cll} \frac{\lambda}{\sqrt{p}\log p}
&& \lambda^2
\leq p
\leq
\exp((\log \lambda)^2)\\ 0 && \text{otherwise}\end{array}\right.\\
&r(p^n) = 0, \qquad n \geq 2.\end{align*}
We also define multiplicative function $t$ by $t(p^n) = \frac{r(p^n)}{1 +
r(p^n)^2}$.

The following two estimates are extrapolated from those used in the
proof of \cite{sound_resonance} Theorem 2.1.
\begin{lemma}\label{ingredients}
 Assume $z > \exp(3\lambda \log \log \lambda)$.  As $x \to \infty$ we have
\begin{equation}\label{main_evaluation} \sum_{m \leq z} \frac{t(m)}{\sqrt{m}}
\sim \prod_p \left(1 + \frac{t(p)}{\sqrt{p}}\right) =  \exp\left((1 + o(1)) \frac{\lambda}{2\log \lambda}\right).
\end{equation}
Also, for $\alpha = \frac{1}{(\log \lambda)^3}$, 
\begin{align}\notag x^{-\alpha} \sum_{\substack{m_1, m_2 \leq z\\ (m_1, m_2) =
1}}\frac{r(m_1)r(m_2)}{(m_1m_2)^{\frac{1}{2}-\alpha}}\sum_{(d, m_1m_2) =
1}r(d)^2d^\alpha &\Bigg / \left(\sum_{m \leq x}
\frac{t(m)}{\sqrt{m}}\right)^2
\sum_d r(d)^2 \\ \label{g_tail}&\leq \exp\left(-(1 + o(1))\frac{32 \log x}{(\log
\log
x)^4}\right).
\end{align}
\end{lemma}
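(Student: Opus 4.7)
The strategy is a direct Euler-product computation combined with Rankin's trick, using that $r$ (hence $t$) is supported on squarefrees with prime factors in $[\lambda^2,\exp((\log\lambda)^2)]$ and that the prime number theorem converts the resulting prime sums to integrals in $u=\log p$.

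For \eqref{main_evaluation}, I will first express $\sum_m t(m)/\sqrt{m}=\prod_p(1+t(p)/\sqrt{p})$, whose logarithm is $\sum_p t(p)/\sqrt{p}+O(\sum_p t(p)^2/p)$. Since $r(p)^2=\lambda^2/(p\log^2 p)$ is negligible on our range, $t(p)=r(p)(1+O(r(p)^2))$, and partial summation with $\pi(y)\sim y/\log y$ gives $\sum_p r(p)/\sqrt{p}=\lambda\sum_p 1/(p\log p)=(1+o(1))\lambda/(2\log\lambda)$, establishing the product asymptotic. To handle the truncation, I will apply Rankin's trick with parameter $\alpha_0=1/(\log\lambda)^3$: the tail $\sum_{m>z}t(m)/\sqrt{m}$ is at most $z^{-\alpha_0}\prod_p(1+t(p)p^{\alpha_0}/\sqrt{p})$. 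Because $\alpha_0\log p\leq 1/\log\lambda$ throughout our prime range, the logarithm of the ratio of the twisted product to the untwisted one is $\ll \alpha_0\sum_p r(p)\log p/\sqrt{p}=O(\alpha_0\lambda\log\log\lambda)$, and the hypothesis $\log z>3\lambda\log\log\lambda$ then forces the whole tail to be $\exp(-\Omega(\alpha_0\lambda\log\log\lambda))=o(1)$ times the main term.

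For \eqref{g_tail}, the key step is to drop the restriction $m_1,m_2\leq z$ (a trivial upper bound since all terms are non-negative). By the pairwise coprimality constraints $(m_1,m_2)=1$ and $(d,m_1m_2)=1$, each prime $p$ contributes to at most one of $m_1,m_2,d$, so the resulting unrestricted triple sum factors cleanly as $\prod_p(1+2r(p)p^{\alpha-1/2}+r(p)^2 p^\alpha)$. The denominator equals $\prod_p(1+r(p)^2)(1+t(p)/\sqrt{p})^2$, and a direct computation (writing $a=r(p)/\sqrt{p}$, $b=r(p)^2$, $t(p)/\sqrt{p}=a/(1+b)$) shows that at $\alpha=0$ the ratio of the two Euler products equals $\prod_p(1-a^2/(1+O(a+b)))=1+O(\sum_p r(p)^2/p)=1+o(1)$. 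Turning on $\alpha$ introduces a factor whose logarithm is, to leading order, $\sum_p r(p)^2(p^\alpha-1)$; the alternative contribution $2\sum_p r(p)(p^\alpha-1)/\sqrt{p}=O(\alpha\lambda\log\log\lambda)$ is genuinely smaller in our range. Evaluating the dominant term by partial summation (converting to an integral in $u=\log p$ and Taylor expanding $e^{\alpha u}$) yields $\sum_p r(p)^2(p^\alpha-1)=(1+o(1))\alpha\lambda^2/(2\log\lambda)=(1+o(1))\lambda^2/(2(\log\lambda)^4)$.

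Substituting $\lambda^2=\log x\log\log x$ and $\log\lambda=\tfrac{1}{2}(\log\log x+\log\log\log x)$ and expanding to subleading order, the Euler-product ratio contribution equals $(8+o(1))\log x/(\log\log x)^3\cdot(1-4\log\log\log x/\log\log x)$, while $\alpha\log x=(8+o(1))\log x/(\log\log x)^3\cdot(1-3\log\log\log x/\log\log x)$. The leading $\log x/(\log\log x)^3$ terms cancel between $-\alpha\log x$ and the ratio of Euler products, leaving the surviving subleading contribution $-(8+o(1))\log x\log\log\log x/(\log\log x)^4$. Since $\log\log\log x\to\infty$, this exceeds $(32+o(1))\log x/(\log\log x)^4$ in magnitude once $x$ is large, yielding the stated bound. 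The main delicate point will be this leading-order cancellation: one must expand $\log\lambda$ to subleading order to expose the $\log\log\log x$ factor, and separately verify that all ancillary contributions---the cross-term $O(\alpha\lambda\log\log\lambda)$, the $\alpha=0$ factor of $1+o(1)$, and the slack from dropping $m_1,m_2\leq z$---are all of strictly lower order than the surviving subleading term.
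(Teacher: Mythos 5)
Your proposal is correct and follows essentially the same route as the paper: Rankin's trick with $\alpha=(\log\lambda)^{-3}$ for the truncation, and an Euler-product factorization of the pairwise-coprime triple sum, with the crucial observation that the leading contributions $-\alpha\log x$ and $\sum_p r(p)^2(p^\alpha-1)$ cancel, leaving a negative subleading term of order $\lambda^2\log\log\lambda/(\log\lambda)^5 \asymp \log x \log\log\log x/(\log\log x)^4$. The only inessential bookkeeping difference is that the paper absorbs the constraint $(d,m_1m_2)=1$ into the $m$-sums by replacing $r$ with $t$, whereas you factor directly into $\prod_p(1 + 2r(p)p^{\alpha-1/2}+r(p)^2p^\alpha)$; your write-up also usefully makes explicit the subleading-order cancellation that the paper's final sentence merely asserts.
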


\begin{proof} First (\ref{main_evaluation}): By 'Rankin's trick', for any
$\alpha>0$ the sum is
\[\prod_p \left(1 + \frac{t(p)}{\sqrt{p}}\right) + O\left(z^{-\alpha}\prod_p
\left(1 + \frac{t(p)}{p^{\frac{1}{2} - \alpha}}\right) \right).\]
The main term is of the desired size (\cite{sound_resonance},
 top of page 6) so it suffices to bound the error. Choose $\alpha =
\frac{1}{(\log \lambda)^3}$. Then the ratio of the error term to the main term
is
\[\ll z^{-\alpha} \prod_p 
\left(1 + \frac{t(p)}{p^{\frac{1}{2}-\alpha}}\right) \bigg/\left(1 +
\frac{t(p)}{p^{\frac{1}{2}}}\right) \leq z^{-\alpha}\exp\left( \sum_p
\frac{t(p)}{\sqrt{p}} \left(p^\alpha - 1\right)\right),\] and since $\alpha
\log p \leq \frac{1}{\log \lambda}$ for all $p$ with $t(p) \neq 0$, this last is
bounded by
\[\leq \exp\left(\alpha\left(-\log z + 2\sum_{p} \frac{t(p) \log
p}{\sqrt{p}}\right)\right) \leq \exp\left(\alpha\left(-\log z + 2\lambda \sum_{
p
 < e^{(\log \lambda)^2}}\frac{1}{p}\right)\right) = o(1).\]

Now (\ref{g_tail}): By our calculation for (\ref{main_evaluation}), the
denominator of (\ref{g_tail}) is 
\[(1 + o(1)) \prod_p \left(1+ \frac{t(p)}{\sqrt{p}}\right) \prod_p \left(1 +
r(p)^2\right).\]
Meanwhile, the double sum in the numerator is bounded by
\[ \leq \sum_{\substack{m_1, m_2 \leq z\\ (m_1, m_2) =
1}}\frac{t(m_1)t(m_2)}{(m_1m_2)^{\frac{1}{2}-\alpha}} \sum_{d} r(d)^2 d^\alpha
\leq \left(\prod_p \left(1 + \frac{t(p)}{p^{\frac{1}{2}-\alpha}}\right)\right)^2
\prod_p(1 + r(p)^2 d^\alpha).\]
Therefore the ratio in
(\ref{g_tail}) is bounded by
\begin{align*}&\ll x^{-\alpha} \prod_p \left(\frac{1 + t(p) p^{\frac{-1}{2} +
\alpha}}{1 + t(p)p^{\frac{-1}{2}}}\right)^2 \prod_p \left(\frac{1 +
r(p)^2p^\alpha}{1 + r(p)^2}\right)
\\ & \ll \exp\left( -\alpha \log x  + 2\alpha \sum_{\lambda^2 \leq p \leq
\exp((\log
\lambda)^2)} \frac{r(p)\log p}{\sqrt{p}} + \alpha \sum_{\lambda^2 \leq p \leq
\exp((\log
\lambda)^2)} r(p)^2 \log p\right).
\end{align*}
Substituting the definition of $r(p)$ and $\alpha$, and using the prime number
theorem, the last expression is bounded by $\exp(\frac{-\lambda^2(1 +
o(1))}{\log^5
\lambda})$.
\end{proof}

We record one more elementary estimate.

\begin{lemma}\label{elementary}
Uniformly in $y \geq 1$ and for any $k > 0$,
\begin{equation}\label{mobius_sum}
 \sum_{\substack{d \leq y\\ (d,k) = 1}} \frac{\mu(d)t(d)^2}{d}
 = 1+o(1), \qquad \qquad(x \to \infty).
\end{equation}

\end{lemma}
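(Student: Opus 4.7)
The plan is to exploit the multiplicativity of $t$ together with the very narrow support of $r$ to reduce the sum essentially to its $d=1$ term.

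First I note that $\mu(d)$ vanishes unless $d$ is squarefree, and that $t$ is supported on integers all of whose prime factors lie in the window $[\lambda^2,\exp((\log\lambda)^2)]$, since $r(p)=0$ outside this window and $r(p^n)=0$ for $n\geq 2$. In particular the sum has $t(1)=1$ and otherwise depends only on squarefree $d>1$ with all prime factors in the window. Separating the $d=1$ term,
\[
\sum_{\substack{d \leq y\\ (d,k)=1}}\frac{\mu(d)t(d)^2}{d} = 1 + \sum_{\substack{1<d\leq y\\ (d,k)=1}}\frac{\mu(d)t(d)^2}{d},
\]
so the assertion reduces to showing that the second sum is $o(1)$ uniformly in $y\geq 1$ and $k\geq 1$.

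Next I would apply the triangle inequality, drop the coprimality to $k$ (which only enlarges the sum of absolute values), and extend the range to all $d>1$:
\[
\left|\sum_{\substack{1<d\leq y\\ (d,k)=1}}\frac{\mu(d)t(d)^2}{d}\right|
\;\leq\; \sum_{d>1}\frac{t(d)^2}{d}
\;=\; \prod_{p}\left(1+\frac{t(p)^2}{p}\right)-1,
\]
where the Euler factorization uses that $t$ is multiplicative and supported on squarefrees. Thus the lemma is reduced to showing that the displayed Euler product equals $1+o(1)$.

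The main (and only) step is the elementary estimate
\[
\sum_{p}\frac{t(p)^2}{p}\;=\;o(1),\qquad (x\to\infty).
\]
Here I use $t(p)\leq r(p)=\lambda/(\sqrt{p}\log p)$, which gives $t(p)^2/p\leq \lambda^2/(p^2\log^2 p)$ on the support window $\lambda^2\leq p\leq \exp((\log\lambda)^2)$ and $0$ elsewhere. By the prime number theorem and partial summation,
\[
\sum_{p\geq \lambda^2}\frac{1}{p^2\log^2 p}\;\ll\;\frac{1}{\lambda^2\log^3\lambda},
\]
hence $\sum_p t(p)^2/p \ll 1/\log^3\lambda = o(1)$. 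Taking logarithms of the Euler product and using $\log(1+u)\leq u$ for $u\geq 0$ yields $\prod_p(1+t(p)^2/p)=1+o(1)$, which completes the proof. There is no real obstacle: the only quantitative input is the tail bound on $\sum 1/(p^2\log^2 p)$, and the uniformity in $y$ and $k$ is automatic because the bound on the tail sum makes no reference to them.
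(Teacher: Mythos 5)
Your proof is correct and follows essentially the same idea as the paper's: isolate the $d=1$ term, then observe that the remaining contribution is negligible because $t$ is supported on integers with all prime factors in the narrow window $[\lambda^2,\exp((\log\lambda)^2)]$, where $t(p)^2/p\ll \lambda^2/(p^2\log^2 p)$ sums to $O(1/\log^3\lambda)$. Your route through the Euler product $\prod_p(1+t(p)^2/p)$ is if anything a bit cleaner than the paper's compressed one-line tail estimate, and it delivers the required uniformity in $y$ and $k$ for free.
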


\begin{proof}

The sum is 
 \begin{align*}
  1 + \sum_{\substack{\lambda^2 <  d <y\\ (d,k)=1}} \frac{\mu(d)t(d)^2}{d}
 &= 1 + O\left(\lambda^2{\sum_{d>\lambda^2}}^\flat
\frac{1}{d^2 (\log d)^2}\right) = 1 + o(1).
 \end{align*}
\end{proof}

Combining the above two lemmas we obtain our basic estimate.

\begin{lemma}\label{main_extraction}
Uniformly in $z \geq 1$,
\[ \sum_{\substack{m_1, m_2 \leq z\\ (m_1, m_2) = 1}}
\frac{t(m_1)t(m_2)m_1m_2}{\max(m_1,m_2)^3} \gg \frac{1}{\log z}\left( \sum_{m
\leq z}
\frac{t(m)}{\sqrt{m}} \right)^2
.\]
In particular, for $\log z > 3 \lambda \log \log \lambda$ we have 
\[ \sum_{\substack{m_1, m_2 \leq z\\ (m_1, m_2) = 1}}
\frac{t(m_1)t(m_2)m_1m_2}{\max(m_1,m_2)^3}\geq \exp\left((1 + o(1))
\frac{\lambda}{\log \lambda}\right).\]
\end{lemma}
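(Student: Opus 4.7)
The plan is a dyadic Cauchy--Schwarz comparison between $S$ and $T := \sum_{m \leq z} t(m)/\sqrt{m}$. Set $I_k = [2^k, 2^{k+1})$ for $0 \leq k \leq J := \lfloor \log_2 z \rfloor$ and $T_k := \sum_{m \in I_k} t(m)/\sqrt{m}$, so that $T = \sum_k T_k$ and Cauchy--Schwarz gives $T^2 \leq (J+1)\sum_k T_k^2$. To produce a matching lower bound on $S$, I would restrict the defining double sum to pairs with both $m_1, m_2$ lying in the same dyadic window $I_k$ and use the elementary inequality
\[
\frac{m_1 m_2}{\max(m_1,m_2)^3} = \frac{\min(m_1,m_2)}{\max(m_1,m_2)^2} \geq \frac{1}{4\sqrt{m_1 m_2}},
\]
valid whenever $m_1, m_2 \in I_k$. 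This yields $S \geq \tfrac{1}{4}\sum_k C_k$, where $C_k$ denotes the coprime-restricted dyadic sum $\sum_{(m_1,m_2)=1,\; m_i \in I_k} t(m_1)t(m_2)/\sqrt{m_1 m_2}$.

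The heart of the argument is then to show that imposing $(m_1,m_2)=1$ costs essentially nothing. Writing $N_k := T_k^2 - C_k$ for the non-coprime defect and singling out any prime $p$ dividing $\gcd(m_1,m_2)$, the squarefree support of $t$ factors $m_i = p n_i$ with $(n_i,p)=1$ and gives
\[
N_k \leq \sum_p \frac{t(p)^2}{p}\Bigl(\sum_{\substack{n \in I_k/p \\ (n,p)=1}} \frac{t(n)}{\sqrt{n}}\Bigr)^{\!2}.
\]
The inner sum is supported on a multiplicative window of width $2$, which meets at most two of the canonical windows $I_j$ at $j \approx k - \log_2 p$, so its square is $\leq 2T_j^2 + 2T_{j+1}^2$. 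Interchanging the sums over $k$ and $p$ and noting that each fixed $j$ arises for only $O(1)$ values of $k$ yields $\sum_k N_k \ll \bigl(\sum_p t(p)^2/p\bigr) \sum_j T_j^2$. A short computation from the definition of $t$ and the prime number theorem then gives
\[
\sum_p \frac{t(p)^2}{p} = \lambda^2 \!\!\sum_{\lambda^2 \leq p \leq e^{\log^2\lambda}} \frac{1}{p^2 \log^2 p} = O\!\bigl((\lambda^2 \log^3 \lambda)^{-1}\bigr) = o(1),
\]
so $\sum_k C_k \geq (1-o(1))\sum_k T_k^2$.

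Combining the three steps, $S \gg \sum_k T_k^2 \gg T^2/\log z$, which proves the first inequality. The second assertion follows by inserting the asymptotic $T = \exp((1+o(1))\lambda/(2\log\lambda))$ from Lemma~\ref{ingredients}; the loss of the factor $\log z$ is absorbed into $(1+o(1))$ because $\log\log z = O(\log \lambda) = o(\lambda/\log\lambda)$ throughout the relevant range of $z$. The main obstacle is the interchange in the middle paragraph: one must verify that the shifted windows $I_k/p$, which are not themselves of the canonical form $I_j$, produce only a bounded combinatorial factor when aggregated over $k$, so that the smallness of $\sum_p t(p)^2/p$ genuinely swamps the coprime defect uniformly across all dyadic scales.
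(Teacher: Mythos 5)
Your proposal is correct but follows a genuinely different route from the paper. The paper's proof removes the coprimality condition \emph{globally and first}: it detects $(m_1,m_2)=1$ via $\sum_{d \mid (m_1,m_2)}\mu(d)$, reparametrizes $m_i \mapsto dm_i$, factors out the inner $d$-sum $\sum_{d\le z/\max, (d,m_1m_2)=1}\mu(d)t(d)^2/d$, and evaluates that sum as $1+o(1)$ uniformly by Lemma \ref{elementary}. Only then is the unrestricted double sum handled by the dyadic block decomposition and Cauchy--Schwarz. You reverse the order: restrict immediately to pairs in the same dyadic block, so the coprimality condition is handled \emph{locally} within each block, and you quantify the non-coprime defect $N_k$ by pulling out a shared prime $p$, showing $\sum_k N_k \ll \bigl(\sum_p t(p)^2/p\bigr)\sum_j T_j^2 = o(\sum_j T_j^2)$. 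Both arguments ultimately hinge on the smallness of $\sum_p t(p)^2/p$ — this is exactly what makes the paper's Lemma \ref{elementary} work — so the essential arithmetic input is the same; the paper's Mobius reparametrization is a bit slicker and more modular, while yours is more hands-on and makes the mechanism by which coprimality is cheap more visible. One small arithmetic slip: the order of $\sum_p t(p)^2/p$ is $O(\log^{-3}\lambda)$, not $O((\lambda^2\log^3\lambda)^{-1})$ — the prefactor $\lambda^2$ exactly cancels the $\lambda^2$ coming from $\sum_{p\ge\lambda^2}1/(p^2\log^2 p) \asymp (\lambda^2\log^3\lambda)^{-1}$. The conclusion $o(1)$ is unaffected. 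Your handling of the shifted windows $I_k/p$ (each fixed $j$ hit only $O(1)$ times over $k$) is correct and is the right way to make the interchange legitimate.
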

\begin{proof}
 The left hand side is equal to
\begin{align}\label{main_estimate}\sum_{m_1, m_2 \leq z}
\frac{t(m_1)t(m_2)m_1m_2}{\max(m_1, m_2)^3} 
\sum_{d|(m_1,m_2)}\mu(d)
&= \sum_{\substack{m_1, m_2 \leq z}} \frac{t(m_1)t(m_2)m_1m_2}{\max(m_1,m_2)^3}
\sum_{\substack{d \leq
\frac{z}{\max(m_1,m_2)}\\ (d,m_1m_2)=1}}
\frac{\mu(d)t(d)^2}{d}
\\&  = (1 + o(1)) \sum_{\substack{m_1, m_2 \leq z}}
\frac{t(m_1)t(m_2)m_1m_2}{\max(m_1,m_2)^3} 
\end{align}
by (\ref{mobius_sum}) of Lemma \ref{elementary}.  This last sum is
\[ \gg \sum_{0 \leq k < \log z} \left(\sum_{\frac{z}{e^{k+1}} < m \leq \frac{z
}{e^k}} \frac{t(m)}{\sqrt{m}}\right)^2 \geq \frac{1}{\log z} \left(\sum_{m \leq
z} \frac{t(m)}{\sqrt{m}}\right)^2\] by Cauchy-Schwartz.
\end{proof}

\subsection{Proof of Theorem \ref{large_theorem}}
Recall that we set either $x = \frac{q}{N}$ for $\Delta(N,q)$, or $x =
\frac{1}{2\log q}\sqrt{\frac{q}{N}}$ for $\Delta(\frac{q}{N}, q)$ ($q$
prime) and in either case $\lambda = \sqrt{\log x \log_2 x}$.  Note
that in the case of $\Delta(N,q)$, the condition $N < q
\exp\left(-\frac{2\log q}{\log_2 q}\right)$ guarantees $\lambda^2 >
2\log q$; in particular if $M$ is minimal such that $\sum_{p \leq
  M}\log p > \log q$ as in Proposition \ref{fund_second}, then the
function $r$ is supported on primes greater than $2\log q > M$ so that
$r$ satisfies the conditions of that Proposition.  

Let $z
= \min(N,x)^{\frac{4}{5}}$.  Since we assume $\log N \geq 4\sqrt{\log
  q \log_2 q}\log_3 q$ this guarantees that $z \geq
\exp(3\lambda\log_2 \lambda)$.
By Propositions \ref{fund_second} and \ref{fund_second_dual}, 
both the bound for $\Delta(N,q)$ and $\Delta(\frac{q}{N},q)$ follow
from the estimate
\begin{equation}\label{essential_estimate} \sum_{\substack{n_1,n_2 <
      z\\ (n_1,n_2) = 1}}\frac{r(n_1)r(n_2)n_1n_2}{\max(n_1,n_2)^3}
  \sum_{\substack{g\leq \frac{x}{\max(n_1,n_2)}\\
        (g,n_1n_2)=1}}r(g)^2 \bigg/ \prod_p(1 + r(p)^2)\geq
    \exp\left((2 + o(1))\sqrt{\frac{\log x}{\log_2 x}}\right) .\end{equation}
Applying Rankin's trick to the sum over $g$ with $\alpha =
\frac{1}{(\log \lambda)^3}$ we obtain the desired main term of
\[ \sum_{\substack{m_1,m_2 \leq z\\(m_1,m_2)=1}}
\frac{t(m_1)t(m_2)m_1m_2}{\max(m_1,m_2)^3} \gg \frac{1}{\log
  z}\left(\sum_{m < z} \frac{t(m)}{\sqrt{m}}\right)^2 \geq \exp\left((1 + o(1))
  \frac{\lambda}{\log \lambda}\right)\] with an error term of
\[\left(\prod_{p} (1 + r(p)^2)\right)^{-1}x^{-\alpha}
\sum_{\substack{m_1,m_2 \leq z\\ (m_1,m_2) = 1}}
\frac{r(m_1)r(m_2)(m_1m_2)^{1 + \alpha}}{\max(m_1,m_2)^3}
\sum_{(g,m_1m_2)=1} r(g)^2 g^\alpha.\]  By Lemmas
\ref{ingredients} and \ref{main_extraction}, the ratio of this
error term to the main term is bounded by $\log z$ times the
expression in (\ref{g_tail}), and thus this ratio is $o(1)$.

\bibliographystyle{plain}
\bibliography{resonance}

\begin{thebibliography}{1}

\bibitem{farmer_gonek_hughes}
David~W. Farmer, S.~M. Gonek, and C.~P. Hughes.
\newblock The maximum size of {$L$}-functions.
\newblock {\em J. Reine Angew. Math.}, 609:215--236, 2007.

\bibitem{granville_soundararajan_large_char}
Andrew Granville and K.~Soundararajan.
\newblock Large character sums.
\newblock {\em J. Amer. Math. Soc.}, 14(2):365--397 (electronic), 2001.

\bibitem{granville_soundararajan_pretension}
Andrew Granville and K.~Soundararajan.
\newblock Large character sums: pretentious characters and the
  {P}\'olya-{V}inogradov theorem.
\newblock {\em J. Amer. Math. Soc.}, 20(2):357--384 (electronic), 2007.

\bibitem{hildebrand_tenenbaum}
Adolf Hildebrand and G{\'e}rald Tenenbaum.
\newblock Integers without large prime factors.
\newblock {\em J. Th\'eor. Nombres Bordeaux}, 5(2):411--484, 1993.

\bibitem{mont_vaughan_grh}
H.~L. Montgomery and R.~C. Vaughan.
\newblock Exponential sums with multiplicative coefficients.
\newblock {\em Invent. Math.}, 43(1):69--82, 1977.

\bibitem{paley}
R.E.A.C. Paley.
\newblock A theorem on characters.
\newblock {\em J. London Math. Soc}, 7:28--32, 1932.

\bibitem{sound_resonance}
K.~Soundararajan.
\newblock Extreme values of zeta and {$L$}-functions.
\newblock {\em Math. Ann.}, 342(2):467--486, 2008.

\end{thebibliography}
\end{document}